\documentclass[11pt]{article}
\usepackage{amsmath}
\usepackage[utf8]{inputenc}
\usepackage{tikz}
\usepackage{amsthm}
\usepackage{thmtools}
\usepackage{hyperref}
\usepackage{cleveref}
\usepackage[a4paper, total={8in, 10in}]{geometry}
\usepackage{blindtext}
\usepackage{comment}
\usepackage{geometry}
\usepackage{amsmath}
\NewDocumentEnvironment{alignb}{b}{%
	\begin{align*}
		\refstepcounter{equation} #1 \tag{\theequation}
	\end{align*}
}{}
\allowdisplaybreaks
\usepackage{enumerate}
\makeatletter
\newcommand{\myitem}[1]{%
	\item[#1]\protected@edef\@currentlabel{#1}%
}
\makeatother
\usepackage{clipboard}
\declaretheorem[numberwithin=section]{theorem, definition}
\declaretheorem{lemma, proposition, remark}[style=plain,
numberwithin=section]
\numberwithin{equation}{section}
\newtheorem{mytheorem}{Theorem}

\numberwithin{mytheorem}{section}
\usepackage{amsbsy}
\usepackage{graphicx}
\usepackage[T1]{fontenc}
\usepackage{lmodern}
\usepackage{imakeidx}
\usepackage{amssymb}
\usepackage{thmtools}
\usepackage{cite}
\newenvironment{myproof}[2] {\paragraph{Proof of {#1} {#2} :}}{\hfill$\square$}

\usepackage[english]{babel}
\title{Multiplicity of solutions with prescribed mass for a quasilinear critical Choquard equation driven by a local-nonlocal operator}
\author{J. Giacomoni\footnote{LMAP (UMR E2S UPPA CNRS 5142) Bat. IPRA, Avenue de l'Universit\'{e}, 64013 Pau, France. e-mail: jacques.giacomoni@univ-pau.fr}\;, Nidhi Nidhi\footnote{Department of Mathematics, Indian Institute of Technology, Delhi, Hauz Khas, New Delhi-110016, India. e-mail: nidhi.nidhi@maths.iitd.ac.in; nidhi.kaushik2809@gmail.com}\; and K. Sreenadh\footnote{Department of Mathematics, Indian Institute of Technology, Delhi, Hauz Khas, New Delhi-110016, India. e-mail: sreenadh@maths.iitd.ac.in}\;}
\date{}
\begin{document}
	\maketitle
	\begin{abstract}
		\noindent In this paper we study the normalized solutions of the following critical growth Choquard equation with mixed local and non-local operators: 
		\begin{equation*}
			\begin{array}{rcl}
				-\Delta_p u +(-\Delta_p)^s u & = & \lambda |u|^{p-2}u +\mu |u|^{q-2}u +(I_{\alpha}*|u|^{p^*_{\alpha}})|u|^{p^*_{\alpha}-2}u \text{ in } \mathbb{R}^N,\\
				\left\| u \right\|_p & =  & \tau.
			\end{array}
		\end{equation*}
		Here, $N\geq 3$, $2 \le p<N$, $\tau>0$, $I_{\alpha}$ is the Riesz potential of order $\alpha\in (\max\{0,N-2p\}, N)$, $p^*_{\alpha}=\frac{p}{2}\left(\frac{N+\alpha}{N-p}\right)$ is the critical exponent corresponding to the Hardy Littlewood Sobolev inequality, $(-\Delta_p)^s$ is the non-local fractional p-Laplacian operator with $s\in (0,1)$, $\mu>0$ is a parameter and $\lambda$ appears as a Lagrange multiplier. We show the existence of at least two distinct solutions in the presence of a mass subcritical perturbation, $\mu |u|^{q-2}u$ with $p<q<p+\frac{sp^2}{N}$ under some conditions on $p,N$ and $s$.
	\end{abstract}
	\noindent {Keywords: Normalized solution, Choquard equation, critical exponent, mixed local and non-local operator, $L^p$-subcritical perturbation, nonlinear Schr$\ddot{\text{o}}$dinger equation driven by local-nonlocal operator.}\\
	\noindent \textit{2020 Mathematics Subject Classification: 35Q55, 35M10, 35J62, 35A01.}
	\section{Introduction}
	This article concerns the existence of multiple normalized solutions to the following quasilinear critical growth Choquard equation involving mixed diffusion-type operator:
	\begin{equation}\label{prob}
		\begin{array}{rcl}
			-\Delta_p u +(-\Delta_p)^s u & = & \lambda |u|^{p-2}u +\mu |u|^{q-2}u +(I_{\alpha}*|u|^{p^*_{\alpha}})|u|^{p^*_{\alpha}-2}u \text{ in } \mathbb{R}^N,\\
			\left\| u \right\|_p & =  & \tau,
		\end{array}
	\end{equation}
	where $N\geq 3$, $\tau>0$, $2\leq p<N$, $p<q<p+\frac{sp^2}{N}$, $\mu>0$ is a parameter and $\lambda$ appears as a Lagrange multiplier.
	The operators $p$-{L}aplacian ($\Delta_p$) and fractional $p$-{L}aplacian $(-\Delta_p)^s$ are defined as:
	$$\Delta_pu=div(|\nabla u|^{p-2}\nabla u),$$
	and
	$$(-\Delta_p)^su(x)=\lim_{\epsilon\rightarrow 0}\int_{\mathbb{R}^N\setminus B_{\epsilon}(0)}\frac{|u(x)-u(y)|^{p-2}(u(x)-u(y))}{|x-y|^{N+sp}}dy \text{ for } s\in (0,1).$$
	Here, $I_{\alpha}$ is the Riesz potential of order $\alpha\in (\max\{0,N-2p\},N)$ given by
	\begin{equation}\label{A_alpha}
		I_{\alpha}(x)=\frac{A_{N,\alpha}}{|x|^{N-\alpha}}\text{ with } A_{N,\alpha}=\frac{\Gamma(\frac{N-2}{2})}{\pi^{\frac{N}{2}}2^{\alpha}\Gamma(\frac{\alpha}{2})}\text{ for every }x\in\mathbb{R}^N\setminus \{0\},
	\end{equation}
	and $p^*_{\alpha}=\left(\frac{p}{2}\right)\left(\frac{N+\alpha}{N-p}\right)$, is the critical exponent with respect to the following well known Hardy-Littlewood-Sobolev(HLS) inequality \cite[Theorem~4.3]{Lieb2001Analysis}:
	\begin{proposition}\label{prop1}
		Let $t,r>1$ and $0<\alpha <N$ with $1/t+1/r=1+\alpha/N$, $f\in L^t(\mathbb{R}^N)$ and $h\in L^r(\mathbb{R}^N)$. There exists a sharp constant $C(t,r,\alpha,N)$ independent of $f$ and $h$, such that
		\begin{equation}\label{HLS}
			\int_{\mathbb{R}^N}\int_{\mathbb{R}^N}\frac{f(x)h(y)}{|x-y|^{N-\alpha}}\, {dx dy}\leq C(t,r,\alpha,N) \|f\|_{L^t(\mathbb{R}^N)}\|h\|_{{L^r(\mathbb{R}^N)}}.
		\end{equation}
		If $t=r=2N/(N+\alpha)$, then
		\begin{align}\label{C_alpha}
			C(t,r,\alpha,N)=C(N,\alpha)= \pi^{\frac{N-\alpha}{2}}\frac{\Gamma(\frac{\alpha}{2})}{\Gamma(\frac{N+\alpha}{2})}\left\lbrace \frac{\Gamma(\frac{N}{2})}{\Gamma(N)}\right\rbrace^{-\frac{\alpha}{N}}.
		\end{align}
		Equality holds in  \eqref{HLS} if and only if ${f}/{h}\equiv constant$ and
		$\displaystyle h(x)= A(\gamma^2+|x-a|^2)^{-(N+\alpha)/2}$
		for some $A\in \mathbb{C}, 0\neq \gamma \in \mathbb{R}$ and $a \in \mathbb{R}^N$.
	\end{proposition}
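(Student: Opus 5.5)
The plan is to prove the Hardy--Littlewood--Sobolev inequality in three stages: the general bound, then the sharp constant in the diagonal case $t=r=2N/(N+\alpha)$, then the characterization of extremals.

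\textbf{General inequality.} First I will reduce to $f,h\ge 0$, since replacing $f,h$ by $|f|,|h|$ only increases the left-hand side. I will also normalize $\|f\|_t=\|h\|_r=1$. Next I will use the layer-cake representation for $f$, $h$ and the kernel $|x|^{-(N-\alpha)}$. This writes the double integral as a triple integral over level sets of the quantity
\[
\int\int \chi_{\{f>a\}}(x)\chi_{\{|x-y|^{\alpha-N}>c\}}\chi_{\{h>b\}}(y)\,dx\,dy .
\]
I will bound this quantity by $\min$ of the three products of pairs of measures, namely
\[
|\{f>a\}|\,|\{h>b\}|,\qquad |\{f>a\}|\,c^{-N/(N-\alpha)},\qquad |\{h>b\}|\,c^{-N/(N-\alpha)},
\]
up to dimensional constants. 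Integrating in $c$ first and splitting the $(a,b)$-region according to whether $|\{f>a\}|\lessgtr|\{h>b\}|$ gives bounds of the form $\int\int v^{\alpha/N}\ldots$. Hölder's inequality together with the relation $1/t+1/r=1+\alpha/N$ then closes the estimate in terms of $\|f\|_t^t+\|h\|_r^r$, which is the argument in Lieb--Loss. An alternative route is Hardy--Littlewood maximal function bounds combined with weak-Young/Marcinkiewicz interpolation.

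\textbf{Sharp constant, $t=r$.} Rearrangement inequalities (Riesz) let me restrict to symmetric decreasing $f=h$. The key tool is conformal invariance: the functional
\[
\frac{\int\int f(x)f(y)|x-y|^{\alpha-N}\,dx\,dy}{\|f\|_{2N/(N+\alpha)}^2}
\]
is invariant under translations, dilations and inversion $x\mapsto x/|x|^2$, with the weight $|x|^{-(N+\alpha)}$. I will transfer the problem to the sphere $S^N$ via stereographic projection, where it becomes invariant under the full conformal group. I will then use the competing-symmetries argument of Carlen--Loss: alternately apply symmetric decreasing rearrangement and a rotation of the sphere. The iterates converge in $L^t$ to a constant function on $S^N$, which corresponds to $h(x)=(1+|x|^2)^{-(N+\alpha)/2}$ on $\mathbb R^N$, and the functional does not decrease along the iteration. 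Therefore this $h$ is a maximizer. Evaluating the functional at $h$ by a Beta-function computation yields $C(N,\alpha)$.

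\textbf{Equality cases.} If equality holds, strict rearrangement inequality (the kernel is strictly symmetric decreasing) forces $|f|,|h|$ to be translates of symmetric decreasing functions. The equality case in the Cauchy--Schwarz step (positive-definiteness of the Riesz kernel) forces $f/h$ to be constant. Finally, a maximizer satisfies the Euler--Lagrange equation, and the classification via the moving-sphere method (or via invariance under inversion about every point, as in Lieb's original argument) forces $h=A(\gamma^2+|x-a|^2)^{-(N+\alpha)/2}$.

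The main obstacle is the sharp-constant and uniqueness part: both the existence of a maximizer (compactness lost to dilations) and the identification of all maximizers. I would attempt the competing-symmetries route first, since it handles existence and identification simultaneously. Since the proposition is quoted from \cite[Theorem~4.3]{Lieb2001Analysis}, in the paper one may simply cite it.
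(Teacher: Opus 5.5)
Your outline is correct and is essentially the proof of \cite[Theorem~4.3]{Lieb2001Analysis}: layer-cake for the general bound, Carlen--Loss competing symmetries on $S^N$ for the sharp diagonal constant, and strict Riesz rearrangement together with positive-definiteness of the Riesz kernel and conformal inversion for the extremals. The paper gives no proof of its own and simply cites that theorem, so your route coincides with the one it relies on. Two points of precision: the reduction to $f=h$ comes from Cauchy--Schwarz for the positive-definite kernel $|x-y|^{-(N-\alpha)}$, not from Riesz's inequality, and passing to the limit along the competing-symmetries iterates needs the $L^{2N/(N+\alpha)}$-continuity of the functional supplied by your first stage. Also, competing symmetries only shows that the bubble is a maximizer; the classification of all maximizers comes from your separate equality argument.
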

	\noindent From the inequality \eqref{HLS}, it follows that for any $u\in W^{1,p}(\mathbb{R}^N)$
	\begin{align*}
		{\mathcal{A}_r(u):=	\int_{\mathbb R^N}\int_{\mathbb R^N}\frac{A_{N,\alpha}|u(x)|^{r}|u(y)|^{r}}{|x-y|^{N-\alpha}}}{ dx dy}
	\end{align*}
	is well defined if $p_*^{\alpha}:=\left(\frac{p}{2}\right)\left(\frac{N+\alpha}{N}\right)\leq r \leq \left(\frac{p}{2}\right)\left(\frac{N+\alpha}{N-p}\right)=p^*_{\alpha}$.\\
	The exponent $r = p^*_{\alpha}$ is known as the Hardy-Littlewood-Sobolev critical exponent and similar to the usual critical exponent, $W^{1,p}_0(\Omega)\ni\, u\mapsto \mathcal{A}_{p^*_\alpha}(u)$ is continuous for the norm topology but not for the weak topology (see \cite{Moroz2017guide}). Thus, the presence of this HLS critical exponent ($p^*_{\alpha}$) makes our problem challenging and intriguing to work on.  	\\
	\noindent Equations involving nonlinearity of the form $(I_{\alpha}*|u|^q)|u|^{q-2}u$ are called {\it Choquard equation}, as in 1976, Choquard, at the Symposium on Coulomb Systems  
	utilised the energy functional associated to equation 
	\begin{equation}\label{choq}
		\left\{ \begin{array}{rl}   	
			& 	-\Delta u  +  u = (I_2*|u|^2)u\;\;\text{in } \mathbb{R}^3,\\
			&  	u \in H^1(\mathbb{R}^3),
		\end{array}
		\right.
	\end{equation}
	to examine a viable approximation to Hartree-Fock theory for a one-component plasma (see \cite{lieb1977existence}). The equation has various other applications in quantum physics, for instance, it is used to characterise an electron confined within its own vacancy, see \cite{penrose1996gravity} and related sources. Several works have ever since conducted research on the existence, multiplicity, and qualitative characteristics of the solution to the Choquard type equations
	as detailed in \cite{filippucci2020singular, Moroz2013groundstates, liu2022another}. \\
	\noindent In particular, for $p=2$, the problem \eqref{prob} becomes the following:
	\begin{equation}\label{p=2}
		\begin{array}{rcl}
			-\Delta u +(-\Delta)^s u & = & \lambda u +\mu |u|^{q-2}u +(I_{\alpha}*|u|^{2^*_{\alpha}})|u|^{2^*_{\alpha}-2}u \text{ in } \mathbb{R}^N\\
			\left\| u \right\|_2 & = & \tau,
		\end{array}
	\end{equation}
	where $2^*_{\alpha}=\frac{N+\alpha}{N-2}$. A solution of \eqref{p=2} gives a standing wave solution to the following Schr\"odinger equation driven by both local and nonlocal operators:
	\begin{equation}\label{Schrodinger_equation}
		i \frac{\partial \psi}{\partial t} = -\Delta \psi+(-\Delta)^s\psi -\mu |\psi|^{p-2} \psi -(I_{\alpha}*|\psi|^{2^*_{\alpha}})|\psi|^{2^*_{\alpha}-2}\psi,
	\end{equation}
	and is called a normalized solution or a solution with prescribed mass. Recently such problems involving fixed mass constraints have attracted many researchers. One can quote for instance the works of \cite{gou2018multiple,bartsch2016normalized,bartsch2018normalized,bartsch2019multiple,noris2014stable,noris2019normalized,Giacomoni2025Normalized,Nidhi2025Existence,Nidhi2025Normalized_Asymp}, where authors studied the existence, multiplicity and regularity of normalized solutions for some nonlinear Schrödinger equations with several local and nonlocal nonlinearities. The general quasilinear case is also studied by various authors: for instance, Feng et al. in \cite{Feng2023Normlaized} studied the problem:
	\begin{equation}
		\left\{
		\begin{array}{ll}
			-\Delta_p u  & = \lambda |u|^{p-2}+\mu |u|^{q-2}u +|u|^{p^*-2}u \text{ in } \mathbb{R}^N,\\
			\left\| u \right\|_p & =c, 
		\end{array}
		\right.
	\end{equation}
	and deduced the existence and multiplicity of normalized solutions for different values for $q$. A similar problem with a general nonlinearity has been tackled by the authors in \cite{Zhang2022normalized}. Also, one can see the work of \cite{Liu2025Solutions}, where the normalized solutions to a critical growth Poisson system involving a p-Laplacian operator have been studied in details. As far as the fractional p-Laplacian operator is concerned, one may refer to the studies by \cite{Cui2025normalized} and \cite{Yu2026Normalized} and references therein. Moreover, the problem involving both p-Laplacian and fractional p-Laplacian is discussed in \cite{Nidhi2025Normlaized_MMA}.
	
	\noindent We dedicated this study to the existence of multiple normalized solutions to a problem concerning both the p-Laplacian and the fractional p-Laplacian operator. The rationale behind this work can be derived from the study of Gou et al. \cite{gou2018multiple}, where they looked at the problem:
	\begin{equation}
		\left\{
		\begin{array}{ll}
			-\Delta u & = \lambda u +\mu |u|^{q-2}u +|u|^{2^*-2}u \text{ in } \mathbb{R}^N,\\
			\left\| u \right\|_2 & = c,
		\end{array}
		\right.
	\end{equation}
	with $2<q<2+\frac{4}{N}$. They demonstrated the existence of a ground-state solution and a second solution with energy strictly less than a fixed constant depending on $c, N$ and the best Sobolev constant $S$. Additionally, X. Sun and Z. Han in \cite{Sun2024Note} studied the same problem with the fractional Laplacian operator and derived analogous results. Further, in \cite{Nidhi2025Existence} and \cite{Nidhi2025Normalized_Asymp}, the case of a mixed local-nonlocal operator and critical Choquard nonlinearity has been discussed, precisely they studied the problem:
	\begin{equation*}
		\left\{
		\begin{array}{rl}
			-\Delta u +(-\Delta)^s u & =\lambda u +\mu |u|^{q-2}u +(I_{\alpha}*|u|^{2^*_{\alpha}})|u|^{2^*_{\alpha}-2}u \text{ in } \mathbb{R}^N, \\
			\left\| u \right\|_2 & = \tau,\\
		\end{array}
		\right.
	\end{equation*}
	and deduced the following:
	\begin{center}
		{\small
			\begin{tabular}{ | p{2.5cm} | p{5cm} | p{4cm} | }
				\hline 
				Range of $q$ & Type of solution & Energy level \\
				\hline 
				$2<q<2+\frac{4s}{N}$ & a local minimizer & $=m_{\tau}<0$  \\
				\hline
				& second solution & $<m_{\tau}+\left(\frac{2^*_{\alpha}-1}{22^*_{\alpha}}\right)S_{\alpha}^{\frac{2^*_{\alpha}}{2^*_{\alpha}-1}}$ \\
				\hline
				$2+\frac{4}{N}\leq q < 2^*$ &  a mountain pass type solution & $<\left(\frac{2^*_{\alpha}-1}{22^*_{\alpha}}\right)S_{\alpha}^{\frac{2^*_{\alpha}}{2^*_{\alpha}-1}}$  \\
				\hline 
		\end{tabular}}
	\end{center}
	where 
	$$S_{\alpha}=\inf_{u\in D^{1,2}(\mathbb{R}^N)\setminus \{0\}}\frac{\left\| \nabla u\right\|_2^2}{\mathcal{A}_{2^*_{\alpha}}(u)^{\frac{1}{2^*_{\alpha}}}}.$$
	{ Here, we would like to extend these results for a generic quasilinear variant of the problem, as stated in \eqref{prob}.  The variational framework implemented for the semilinear setting ($p=2$) relies fundamentally on analyzing the fiber maps $\psi_u(t)=E(t\star u)$ to characterize the geometry of the Poho\v{z}aev manifold, utilizing the Ekeland variational principle to construct minimizing sequences, and recovering compactness below a sharp energy threshold. While the quasilinear setting investigated in this article maintains a structurally parallel uniform scaling profile, extending these arguments to cases where $p \neq 2$ introduces severe analytical hurdles. Firstly, without the quadratic nature of $p=2$, verifying the strict monotonicity, convexity, and structural behavior of the fiber maps becomes significantly more tedious, forcing a reliance on delicate algebraic inequalities rather than clean exact identities (see in particular Lemma \ref{R_1_R_2_R_3} and Lemma \ref{Lemma 4.1}). Moreover, because Talenti-type optimal functions do not exist for Choquard equations in the quasilinear setting (contrary to the case $p=2$), tracking the concentration of minimizing sequences to ensure they remain strictly below the threshold of loss of compactness becomes exceptionally intricate. Consequently, establishing the necessary localized energy $\epsilon$-estimates demands navigating the strong nonlinearity of the critical Sobolev constant $\mathbb{S}$ (see \eqref{S_alpha,p}) against the complex backdrop of non-local quasilinear interactions.}
	
	\noindent	Let us formally initiate our study by discussing the variational framework of \eqref{prob}. Denoting
	$$S(\tau):=\{u\in W^{1,p}(\mathbb{R}^N): \left\| u \right\|_p=\tau\},$$
	we define the weak solution as follows:
	\begin{definition}\label{weak_sol}
		A function $u\in S(\tau)$ is said to be a weak solution to \eqref{prob} if it satisfies the following:
		\begin{equation}\label{sol}
			\int_{\mathbb{R}^N}|\nabla u|^{p-2}\nabla u \nabla v +\ll u,v \gg_{s,p} = \lambda\int_{\mathbb{R}^N}|u|^{p-2}uv+ \mu\int_{\mathbb{R}^N}|u|^{q-2}uv +\int_{\mathbb{R}^N}(I_{\alpha}*|u|^{p^*_{\alpha}})|u|^{p^*_{\alpha}-2}uv, 
		\end{equation}
		for all $v\in W^{1,p}(\mathbb{R}^N)$. Here, we denote 
		\begin{equation*}
			\ll u, v \gg_{s,p} := \int_{\mathbb{R}^N}\int_{\mathbb{R}^N}\frac{|u(x)-u(y)|^{p-2}(u(x)-u(y))(v(x)-v(y))}{|x-y|^{N+sp}}dxdy,
		\end{equation*}
		and the space $W^{1,p}(\mathbb{R}^N)$ is equipped with the Banach norm:
		\begin{equation*}
			\left\| u \right\| = \left(T(u)^p+\left\| u \right\|_p^p\right)^{\frac{1}{p}} \text{ where }T(u)^p=\left\| \nabla u \right\|_p^p+[u]_{s,p}^p, \text{ with } [u]_{s,p}^p=\ll u, u \gg_{s,p}.
		\end{equation*}
	\end{definition}
	\noindent Starting with the regularity of weak solution to \eqref{prob}, our first result is the following:
	\begin{mytheorem}\label{Regularity}
		Suppose $u\in W^{1,p}(\mathbb{R}^N)$ is a weak solution of \eqref{prob}, then $u\in C^{\delta}_{loc}(\mathbb{R}^N)$ for all $0<\delta<\Theta$, where
		$\Theta =\min\left\{1,\frac{sp}{p-1}\right\}>s.$
	\end{mytheorem}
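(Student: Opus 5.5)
The proof has two stages. First we show that every weak solution is bounded. Then we invoke an existing Hölder regularity theorem for mixed local--nonlocal $p$-Laplace equations with bounded right-hand side.

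\textbf{Reduction to a local problem.} Write the right-hand side of \eqref{sol} as $f(x,u)=\lambda|u|^{p-2}u+\mu|u|^{q-2}u+V(x)|u|^{p^*_\alpha-2}u$, where $V:=I_\alpha*|u|^{p^*_\alpha}$. Since $u\in W^{1,p}(\mathbb{R}^N)\subset L^{p}\cap L^{p^*}$, Proposition \ref{prop1} shows that $V$ is a Riesz potential of a function in $L^{t}$ with $t=\frac{2N}{N+\alpha}\cdot\frac{p^*}{p}\cdot\frac{1}{2}\cdot 2$. Choosing exponents appropriately, $V\in L^{r}$ for a range of $r$ around $\frac{2N}{N-\alpha}$.

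The critical term then behaves like $|f(x,u)|\le C(1+a(x))(1+|u|^{p-1})$ with $a:=V|u|^{p^*_\alpha-p}$. The condition $\alpha>N-2p$ is exactly what should place $a$ in $L^{N/p}$; this is the Choquard analogue of the Brezis--Kato structure. The constraint $\|u\|_p=\tau$ plays no role here, because $\lambda$ is simply a fixed real number.

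\textbf{Step 1: $L^\infty$ bound.} I would run a Moser iteration, or a Brezis--Kato argument, adapted to the $p$-Laplacian. Test \eqref{sol} with $v=u\min\{|u|,L\}^{p(\beta-1)}$. The local part gives $\|\nabla(u\,u_L^{\beta-1})\|_p^p$ up to constants in $\beta$. The nonlocal part $\ll u,v\gg_{s,p}$ is nonnegative by the standard monotonicity inequality $|a-b|^{p-2}(a-b)(g(a)-g(b))\ge 0$ for increasing $g$, so it can be discarded. For the right-hand side, split $a=a\chi_{\{a>M\}}+a\chi_{\{a\le M\}}$ with $\|a\chi_{\{a>M\}}\|_{N/p}$ small, and absorb that piece via the Sobolev inequality. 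Iterating $\beta_{k+1}=\beta_k p^*/p$ yields $u\in L^r$ for all $r<\infty$. Then $V\in L^\infty$ by splitting the Riesz kernel into near and far parts, hence $f(\cdot,u)\in L^r_{loc}$ for large $r$, and a final De Giorgi or Moser step gives $u\in L^\infty(\mathbb{R}^N)$.

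This step is the main obstacle. The nonlocal bilinear coupling in $V$ means the smallness must come from the decomposition of $a$ rather than from $u$ alone, and the exponent bookkeeping relies on $\alpha>N-2p$ and $p\ge2$. I would follow the argument of Moroz--Van Schaftingen \cite{Moroz2017guide} for $p=2$ and transfer it to $p$-growth.

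\textbf{Step 2: Hölder regularity.} Once $u\in L^\infty(\mathbb{R}^N)$ and $V\in L^\infty$, the function $f(\cdot,u(\cdot))$ is bounded, so $u$ solves $-\Delta_pu+(-\Delta_p)^su=g\in L^\infty$ with $u$ globally bounded (hence with finite tail). Known interior regularity for mixed local--nonlocal $p$-Laplacians, by Garain--Lindgren and De Filippis--Mingione, then gives $u\in C^{\delta}_{loc}$. The first of these gives every $\delta<\min\{1,\frac{sp}{p-1}\}$ in the regime $p\ge2$, which is exactly $\Theta$. Finally, $\Theta>s$ because $\frac{sp}{p-1}>s$ and $1>s$.
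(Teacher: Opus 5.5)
Your argument is correct, but it reaches the $L^r$ bound by a different route than the paper. The two proofs end the same way: $I_\alpha*|u|^{p^*_\alpha}\in L^\infty$ by a near/far kernel splitting, then Garain--Lindgren.

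\textbf{The paper's route.} It first proves integrability by a Kato-type inequality for $|u|$, derived from the convex approximations $h_\epsilon(t)=\sqrt{\epsilon^2+t^2}$. It then tests with the truncations $u_\gamma^\beta$ and keeps the nonlocal term, bounding it below by $[u_\gamma^k]_{s,p}^p$. At each step it applies HLS to the whole Choquard term, splits into $\{|u|<\delta\}$ and $\{|u|\ge\delta\}$, and absorbs via smallness of $\int_{\{|u|\ge\delta\}}|u|^{p^*}$. The iteration ratio is $\frac{N+\alpha}{2(N-p)}$. Finally it applies \cite[Theorem~1.4]{Garain2023Higher} directly with $f\in L^{r_0}_{loc}$, $r_0>N$. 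Then $\frac{p-N/r_0}{p-1}>1$, so no $L^\infty$ bound on $u$ is ever needed.

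\textbf{Your route.} You freeze $V$ and treat the critical term as a linear potential $a=V|u|^{p^*_\alpha-p}$. This is legitimate: HLS gives $V\in L^{2N/(N-\alpha)}$, and
\[
\frac{N-\alpha}{2N}+\frac{p^*_\alpha-p}{p^*}=\frac{p}{N},
\]
so $a\in L^{N/p}$ exactly when $p^*_\alpha\ge p$, i.e.\ $\alpha\ge N-2p$. A Brezis--Kato iteration then works. It discards the nonlocal term by monotonicity, uses only the Sobolev inequality for the gradient, and gains $p^*/p$ per step.

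\textbf{What your route buys.} The paper's integrability step assumes $u$ radial and $\lambda<0$. It uses $-\lambda>0$ to control the full $W^{1,p}$-norm of $u_\gamma^k$ before embedding into $L^{p^*}$, and the radial lemma at the end. Your argument needs neither, so it covers every weak solution and every $\lambda\in\mathbb{R}$. That is what Theorem \ref{Regularity} actually asserts.

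\textbf{Slips to fix.}
\begin{itemize}
\item The exponent $t$ you write is wrong. It should be $t=\frac{2N}{N+\alpha}$. The resulting range is $V\in L^r$ for $r\in\bigl(\frac{N}{N-\alpha},\frac{2N}{N-\alpha}\bigr]$, not a range ``around'' $\frac{2N}{N-\alpha}$.
\item The bound $|f|\le C(1+a)(1+|u|^{p-1})$ does not control $\mu|u|^{q-1}$, and the extra $1$ is a nuisance in the iteration. Write instead $|f|\le b(x)|u|^{p-1}$ with
\[
b=|\lambda|+\mu|u|^{q-p}+a\in L^{N/p}+L^\infty.
\]
Note $|u|^{q-p}\chi_{\{|u|>1\}}\in L^{N/p}$ because $(q-p)\frac{N}{p}<p^*$.
\item Your final global $L^\infty$ step is harmless but superfluous. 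Once $f(\cdot,u)\in L^{r}_{loc}$ for some $r>N$, the $L^{r}$ version of the Garain--Lindgren result already gives every $\delta<\min\{1,\frac{sp}{p-1}\}$.
\end{itemize}
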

	\noindent Thus, calling the work of Anthal and Garain \cite{Anthal2025Pohozaev} and using the above regularity, it can be established that any weak solution must satisfy the following Pohozaev type identity:
	\begin{equation}\label{Pohozaev_identity}
		\left(\frac{N-p}{p}\right)\left\| \nabla u \right\|_p^p+\left(\frac{N-sp}{p}\right)[u]_{s,p}^p=\frac{\lambda N}{p}\left\| u \right\|_p^p+\frac{\mu N}{q}\left\| u \right\|_q^q +\left(\frac{N-p}{p}\right)A(u),
	\end{equation}
	where $A=\mathcal{A}_{p^*_{\alpha}}$. Equation \eqref{Pohozaev_identity} ensures the non-existence of any solution to \eqref{prob} corresponding to $\lambda<0$ and $q=p^*=\frac{Np}{N-p}$ demonstrated as follows:
	\begin{mytheorem}\label{non_existence}
		Suppose, $q=p^*=\frac{Np}{N-p}$ and $\lambda<0$ then \eqref{prob} does not exhibit any weak solution.
	\end{mytheorem}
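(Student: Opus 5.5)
Suppose, towards a contradiction, that $u\in S(\tau)$ is a weak solution of \eqref{prob} with $q=p^*=\frac{Np}{N-p}$ and $\lambda<0$. The plan is to combine two integral identities satisfied by $u$ and check that they are incompatible with $\lambda<0$.

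\emph{Step 1: the Nehari-type identity.} Testing \eqref{sol} with $v=u\in W^{1,p}(\mathbb{R}^N)$ gives
\begin{equation*}
\left\| \nabla u \right\|_p^p+[u]_{s,p}^p=\lambda\left\| u \right\|_p^p+\mu\left\| u \right\|_q^q+A(u).
\end{equation*}
This is legitimate because $q=p^*$ makes $\int|u|^{q}$ finite by Sobolev, and $A(u)$ is finite by \eqref{HLS}.

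\emph{Step 2: the Poho\v{z}aev identity.} By Theorem \ref{Regularity} and the result of Anthal and Garain, $u$ satisfies \eqref{Pohozaev_identity}. With $q=p^*$ we have $\frac{N}{q}=\frac{N-p}{p}$, so \eqref{Pohozaev_identity} reads
\begin{equation*}
\tfrac{N-p}{p}\left\| \nabla u \right\|_p^p+\tfrac{N-sp}{p}[u]_{s,p}^p=\tfrac{\lambda N}{p}\left\| u \right\|_p^p+\tfrac{N-p}{p}\left(\mu\left\| u \right\|_q^q+A(u)\right).
\end{equation*}

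\emph{Step 3: elimination.} Multiply the Step 1 identity by $\frac{N-p}{p}$ and subtract it from the Step 2 identity. The terms $\mu\|u\|_q^q$, $A(u)$ and $\|\nabla u\|_p^p$ cancel, leaving
\begin{equation*}
\tfrac{(1-s)p}{p}[u]_{s,p}^p=(1-s)[u]_{s,p}^p=\lambda\left(\tfrac{N}{p}-\tfrac{N-p}{p}\right)\left\| u \right\|_p^p=\lambda\tau^p.
\end{equation*}
The left side is nonnegative since $s<1$. The right side is strictly negative, since $\lambda<0$ and $\|u\|_p=\tau>0$. This is the contradiction.

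The main obstacle is Step 2, namely justifying the Poho\v{z}aev identity. This rests on the regularity from Theorem \ref{Regularity} together with enough decay and integrability to apply the Anthal--Garain result to the Choquard and critical terms. I take that as given from the preceding discussion, after which the rest is the algebra above.
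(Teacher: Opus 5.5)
Your proposal is correct and follows the paper's proof exactly. Like the paper, you combine the identity obtained by testing with $u$ with the Poho\v{z}aev identity \eqref{Pohozaev_identity}, use $N/q=(N-p)/p$ for $q=p^*$ to get $(1-s)[u]_{s,p}^p=\lambda\tau^p<0$, and take the Poho\v{z}aev identity as given from Theorem \ref{Regularity} and the Anthal--Garain result.
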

	\begin{proof}
		Suppose, \eqref{prob} has a solution, then 
		$$\left\| \nabla u \right\|_p^p +[u]_{s,p}^p = \lambda \tau^p+\mu \left\| u \right\|_q^q +A(u),$$
		and by Pohozaev identity,
		$$\left(\frac{N-p}{p}\right)\left\| \nabla u \right\|_p^p+\left(\frac{N-sp}{p}\right)[u]_{s,p}^p= \frac{N\lambda\tau^p}{p}+\mu \frac{N}{q}\left\| u \right\|_q^q +\left(\frac{N-p}{p}\right)A(u).$$
		By, above two equations, for $q=p^*$ and $\lambda<0$, we get
		$$(1-s)[u]_{s,p}^p =\lambda \tau^p<0.$$
		Hence, by contradiction, \eqref{prob} cannot have solution, for $q=p^*$ and $\lambda<0$.
	\end{proof}
	\noindent  Next, using \eqref{Pohozaev_identity}, it can be seen that a solution to \eqref{prob} lies on the Pohozaev Manifold 
	\begin{equation}
		\mathcal{M}_{\tau}:=\{u\in S(\tau): M(u)=0\},
	\end{equation}
	\begin{equation*}
		\text{ where }M(u)= \left\| \nabla u \right\|_p^p+s[u]_{s,p}^p-\mu\gamma_{p,q}\left\| u \right\|_q^q-A(u) \text{ with }\gamma_{p,q}:=\frac{N(q-p)}{pq}.
	\end{equation*}
	Further using the fibre maps technique in section 3, we subdivided $\mathcal{M}_{\tau}$ into disjoint subsets
	$$\mathcal{M}_{\tau}^0:= \{u\in \mathcal{M}_{\tau} : p \left\| \nabla u \right\|_p^p+ps^2[u]_{s,p}^p=\mu q\gamma_{p,q}^2\left\| u \right\|_q^q+2.p^*_{\alpha}A(u)\},$$
	$$\mathcal{M}_{\tau}^+:= \{u\in \mathcal{M}_{\tau} : p \left\| \nabla u \right\|_p^p+ps^2[u]_{s,p}^p>\mu q\gamma_{p,q}^2\left\| u \right\|_q^q+2.p^*_{\alpha}A(u)\},$$			$$\mathcal{M}_{\tau}^-:= \{u\in \mathcal{M}_{\tau} : p \left\| \nabla u \right\|_p^p+ps^2[u]_{s,p}^p<\mu q\gamma_{p,q}^2\left\| u \right\|_q^q+2.p^*_{\alpha}A(u)\},$$
	and looked for distinct solutions in these disjoint subsets. \\
	\noindent Thanks to symmetric decreasing rearrangement, the Gagliardo-Nirenberg inequality \eqref{G_N_inequality}
	and compact imbedding $W^{1,p}_r(\mathbb{R}^N)\hookrightarrow L^t(\mathbb{R}^N)$ for all $t\in (p,p^*)$, 
	by the Ekeland variational principle, we deduce the existence of a first solution. Precisely, taking 
	$$\tau_1:=\left(\frac{q(2p^*_{\alpha}-p)(2p^*_{\alpha}
		\mathbb{S}^{\frac{2p^*_{\alpha}}{p}}(p-q\gamma_{p,q}))^{\frac{p-q\gamma_{p,q}}{2p^*_{\alpha}-p}}}{\mu C_{N,p,q}(p(2p^*_{\alpha}-q\gamma_{p,q}))^{\frac{2p^*_{\alpha}-q\gamma_{p,q}}{2p^*_{\alpha}-p}}}\right)^{\frac{1}{q(1-\gamma_{p,q})}}$$
	and 
	$$\tau_2:=\left(\frac{(2p^*_{\alpha}-p)}{\mu C_{N,p,q}(2p^*_{\alpha}-q\gamma_{p,q})\gamma_{p,q}^{\frac{q\gamma_{p,q}}{p}}}\left(\frac{q\mathbb{S}^{\frac{2p^*_{\alpha}}{2p^*_{\alpha}-p}}}{p-q\gamma_{p,q}}\right)^{\frac{p-q\gamma_{p,q}}{p}}\right)^{\frac{1}{q(1-\gamma_{p,q})}}$$
	we have the following:
	\begin{mytheorem}\label{Theorem 1}
		For $N\geq 3$, $s\in (0,1)$, $p<q<p+\frac{p^2s}{N}$ and $0<\tau<\min\{\tau_1,\tau_2\}$, there exists a radially decreasing positive function $u_{\tau}^+\in W^{1,p}(\mathbb{R}^N)$ that attains $m_{\tau}^+:=\displaystyle \inf_{u\in \mathcal{M}_{\tau}^+}E(u)$, that is, $E(u_{\tau}^+)=m_{\tau}^+<0$. Moreover, $u_{\tau}^+$ solves \eqref{prob} corresponding to some $\lambda=\lambda_{\tau}^+<0$, for sufficiently large $\mu>0$.
	\end{mytheorem}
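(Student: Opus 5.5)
The plan follows the Soave/Gou–Jeanjean scheme, adapted to the mixed quasilinear operator. The energy is
$$E(u)=\tfrac1p\|\nabla u\|_p^p+\tfrac1p[u]_{s,p}^p-\tfrac{\mu}{q}\|u\|_q^q-\tfrac{1}{2p^*_\alpha}A(u).$$
We use the $L^p$-preserving dilation $(t\star u)(x)=e^{Nt/p}u(e^tx)$, under which
$$\|\nabla(t\star u)\|_p^p=e^{pt}\|\nabla u\|_p^p,\qquad [t\star u]_{s,p}^p=e^{spt}[u]_{s,p}^p,$$
$$\|t\star u\|_q^q=e^{q\gamma_{p,q}t}\|u\|_q^q,\qquad A(t\star u)=e^{2p^*_\alpha t}A(u).$$
Consequently $\psi_u'(t)=M(t\star u)$, and $\mathcal M_\tau^{\pm,0}$ correspond to the sign of $\psi_u''$ at critical points.

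\textbf{Step 1: lower bound.} Using the Gagliardo–Nirenberg inequality \eqref{G_N_inequality} with constant $C_{N,p,q}$ and the definition of $\mathbb S$, we bound
$$E(u)\ge h(T(u)):=\tfrac1p T^p-\tfrac{\mu}{q}C_{N,p,q}\tau^{q(1-\gamma_{p,q})}T^{q\gamma_{p,q}}-\tfrac{1}{2p^*_\alpha}\mathbb S^{-2p^*_\alpha/p}T^{2p^*_\alpha}$$
(up to the way $T$ controls $\|\nabla u\|_p$). Since $q\gamma_{p,q}<p<2p^*_\alpha$, the function $h$ is negative near $0$ and at infinity. The condition $\tau<\tau_1$ is exactly what makes $h$ attain a positive maximum on some interval $[R_0,R_1]$; this is presumably the content of Lemma \ref{R_1_R_2_R_3}. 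The condition $\tau<\tau_2$ gives $\mathcal M_\tau^0=\emptyset$: combining $M(u)=0$ with the equality defining $\mathcal M^0_\tau$ and eliminating $A(u)$ produces a lower bound on $T(u)$, and then GN contradicts it for small $\tau$.

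\textbf{Step 2: fiber maps.} For each $u\in S(\tau)$, $\psi_u$ has exactly two critical points $s_u<t_u$: a local minimum with $s_u\star u\in\mathcal M^+_\tau$, $T(s_u\star u)<R_0$ and $E<0$, and a global maximum in $\mathcal M^-_\tau$. Monotonicity is the delicate part since $p\neq2$. Writing $\psi_u'$ as a sum of exponentials with exponents $q\gamma_{p,q}<sp<p<2p^*_\alpha$, we count sign changes of $e^{-q\gamma t}\psi_u'(t)$, whose derivative is a combination of positive and negative exponentials, to show it has at most two zeros. This also gives
$$m_\tau^+=\inf_{A_{R_0}}E<0,\qquad A_{R_0}=\{u\in S(\tau):T(u)<R_0\},$$
and $\mathcal M_\tau^+\subset A_{R_0}$, with $\inf_{\partial A_{R_0}}E\ge0$ or at least strictly above $m^+_\tau$.

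\textbf{Step 3: minimizing sequence and compactness.} Take a minimizing sequence in $A_{R_0}$ and replace $u_n$ by its Schwarz symmetrization $|u_n|^*$. Pólya–Szegő holds for both $\|\nabla\cdot\|_p$ and $[\cdot]_{s,p}$, Riesz's rearrangement inequality holds for $A$, and the $L^q$ and $L^p$ norms are preserved. So $E$ does not increase, we stay in $A_{R_0}$, and the functions are radial and nonincreasing. We then project onto $\mathcal M_\tau^+$ via $s_{u_n}\star u_n$ and apply Ekeland's principle on $S(\tau)$ to obtain a Palais–Smale sequence with $M(u_n)\to0$ and Lagrange multipliers $\lambda_n$ bounded.

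The main obstacle is passing to the limit in the critical Choquard term. By the compact embedding $W^{1,p}_r\hookrightarrow L^q$, we get $\|u_n\|_q\to\|u\|_q$, and $u\ne0$ because $m^+_\tau<0$ forces $\liminf\|u_n\|_q>0$. Setting $v_n=u_n-u$, Brezis–Lieb splittings for $T^p$ and $A$ (the nonlocal Brezis–Lieb lemma for Choquard), combined with $M(u_n)\to0$, give
$$\|\nabla v_n\|^p_p+s[v_n]^p_{s,p}-A(v_n)\to0,$$
and hence either $T(v_n)\to0$ or $T(v_n)^p\gtrsim \mathbb S^{2p^*_\alpha/(2p^*_\alpha-p)}$. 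The latter is excluded because $T(u_n)<R_0$ and $R_0$ lies below that threshold, again by $\tau<\tau_1$. This yields strong convergence in $D$, and then in $L^p$ once we show $\lambda<0$.

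\textbf{Step 4: sign of $\lambda$ and positivity.} Testing the equation with $u$ and subtracting the Pohozaev identity $M(u)=0$ gives
$$\lambda\tau^p=(1-s)[u]^p_{s,p}-\mu(1-\gamma_{p,q})\|u\|_q^q.$$
We need this to be negative, which we plan to obtain from $E(u)<0$ combined with $M(u)=0$. This is where "$\mu$ sufficiently large" enters: the $\mu\|u\|_q^q$ term must dominate $(1-s)[u]^p_{s,p}$, using an estimate $[u]^p_{s,p}\le C\|u\|^{p(1-\theta)}_p\|\nabla u\|^{p\theta}_p$ together with a bound of $\|\nabla u\|_p$ in terms of $\mu\|u\|^q_q$ derived from $M(u)=0$. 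With $\lambda<0$, the $L^p$ norm is preserved in the limit, so $u\in\mathcal M^+_\tau$ and $E(u)=m^+_\tau$. Since $u\ge0$ is radially nonincreasing, the strong maximum principle for the mixed operator, available after the regularity given by Theorem \ref{Regularity}, gives $u>0$.
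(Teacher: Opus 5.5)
Steps 1--3 follow the paper's scheme, with two deviations that do no harm.

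\emph{The role of $\tau_2$.} You use $\tau<\tau_2$ for something the paper does not. $\mathcal M^0_\tau=\emptyset$ already follows under $\tau<\tau_1$ from the two-critical-point structure of $\psi_u$ (\autoref{Lemma 2.3}). The $h$-geometry is \autoref{lemma 2.2}, not \autoref{R_1_R_2_R_3}. The paper uses $\tau_2$ only to exclude concentration, via the bound $m_\tau\ge f(t_0)+\frac{2p^*_\alpha-p}{2pp^*_\alpha}\mathbb S^{\frac{2p^*_\alpha}{2p^*_\alpha-p}}>0$.

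\emph{Your exclusion of concentration.} Using $T(u_n)<R_0$ is a valid and shorter route, and it makes $\tau_2$ unnecessary. $R_0$ lies below the maximizer $t_0$ of $\bar h$, and $t_0^{2p^*_\alpha-p}=\frac{2p^*_\alpha(p-q\gamma_{p,q})}{p(2p^*_\alpha-q\gamma_{p,q})}\mathbb S^{\frac{2p^*_\alpha}{p}}<\mathbb S^{\frac{2p^*_\alpha}{p}}$. Hence $\limsup(\|\nabla v_n\|_p^p+s[v_n]_{s,p}^p)\le R_0^p<\mathbb S^{\frac{2p^*_\alpha}{2p^*_\alpha-p}}$. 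The same fact gives $A(w)\le\|\nabla w\|_p^p$ whenever $T(w)\le R_0$, which is what justifies your nonvanishing claim $\liminf\|u_n\|_q>0$.

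The genuine gap is Step 4. Your mechanism for $\lambda<0$ combines the interpolation $[u]_{s,p}^p\le C\tau^{p(1-s)}\|\nabla u\|_p^{ps}$ with the bound $\|\nabla u\|_p^p\lesssim\mu\|u\|_q^q$ from $M(u)=0$. It closes only if $\mu\|u\|_q^q\ge C\tau^p$, and that fails. Indeed, $M(u)=0$, $A(u)\le\|\nabla u\|_p^p R_0^{2p^*_\alpha-p}\mathbb S^{-\frac{2p^*_\alpha}{p}}$ and Gagliardo--Nirenberg give $\|\nabla u\|_p^{p-q\gamma_{p,q}}\lesssim\mu\tau^{q(1-\gamma_{p,q})}$, hence
\[
\frac{\mu\|u\|_q^q}{\tau^p}\lesssim\mu^{\frac{p}{p-q\gamma_{p,q}}}\,\tau^{\frac{p(q-p)}{p-q\gamma_{p,q}}}\longrightarrow 0\qquad(\tau\to 0,\ \mu\ \text{fixed}).
\]
So for every $\mu$ the argument breaks down for all small admissible $\tau$. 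Bounding $\|\nabla u\|_p^p$ by $\mu\|u\|_q^q$ is too lossy in the spread-out regime, where $\|\nabla u\|_p^p\ll[u]_{s,p}^p$.

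The missing idea is to compare $[u_n]_{s,p}^p$ with $\mu\|u_n\|_q^q$ directly through the Poho\v{z}aev constraint. Since $q<p+\frac{sp^2}{N}$, we have $q\gamma_{p,q}<sp$, hence $\gamma_{p,q}<s$. Since also $A(u_n)\le\|\nabla u_n\|_p^p$, $M(u_n)\to0$ gives $s[u_n]_{s,p}^p\le\mu\gamma_{p,q}\|u_n\|_q^q+o_n(1)$. Therefore
\[
\lambda_n\tau^p=(1-s)[u_n]_{s,p}^p-\mu(1-\gamma_{p,q})\|u_n\|_q^q+o_n(1)\le\frac{\gamma_{p,q}-s}{s}\,\mu\|u_n\|_q^q+o_n(1).
\]
With $\|u_n\|_q\to\|u\|_q>0$ this yields $\lambda<0$ with no condition on $\mu$. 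The paper's Claim 1 reaches the same identity but only asserts negativity for large $\mu$.

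Do this at the level of the sequence, before the compactness step, not for the limit afterwards. Your Step 3 splitting tacitly uses $M(u)=0$ for the weak limit. In the paper that Poho\v{z}aev identity is only available through \autoref{L_r_regularity}, which assumes $\lambda<0$. Moreover, $\lambda\neq0$ is exactly what turns $\lambda_n\tau^p\to\lambda\|u\|_p^p$ into $\|u\|_p=\tau$.
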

	\noindent Moreover, if $p,N$ and $s$ satisfies the following:
	\begin{equation}\label{Conditions_p,N,s}
		\left\{		\begin{array}{cl}
			\text{ for } p^2>N, & \text{ either }\frac{N-p}{p(p-1)}<p(1-s)<\frac{N-p}{p-1} \text{ or } p(1-s) \geq \frac{N-p}{p-1},\\
			\text{ for } p^2 \leq N, & N<\min\{\frac{p^3+p}{2}, p^2(1-s)(p-1)+p\}, 
		\end{array}
		\right.
	\end{equation}
	then, we prove the existence of a second solution to \eqref{prob}. Precisely, we have the following:
	\begin{mytheorem}\label{Theorem 2}
		Let
		$$\tau_3=\left(\frac{(2p^*_{\alpha}-p)}{\mu p(2p^*_{\alpha}-q\gamma_{p,q})C_{N,p,q}}\right)^{\frac{1}{q(1-\gamma_{p,q})}}.$$
		For, $2\leq p <N$ and $s\in(0,1)$, assume $N,p$ and $s$ satisfies \eqref{Conditions_p,N,s}, then for all $\tau<\min\{\tau_1,\tau_2,\tau_3\}$, \eqref{prob} admits a second solution $(u_{\tau}^-,\lambda_{\tau}^-)\in W^{1,p}(\mathbb{R}^N)\times \mathbb{R}$, with $u_{\tau}^-\in \mathcal{M}_{\tau}^-$.
	\end{mytheorem}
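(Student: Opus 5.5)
We will follow the scheme of Gou et al. and its mixed local–nonlocal Choquard adaptation (the $p=2$ case), working throughout with the fiber maps $\psi_u(t)=E(t\star u)$, where $(t\star u)(x)=t^{N/p}u(tx)$ preserves $\|u\|_p$. For $\tau<\min\{\tau_1,\tau_2,\tau_3\}$ we will first show that $\mathcal{M}_\tau^0=\emptyset$ and that each $u\in S(\tau)$ has exactly two critical points $s_u<t_u$ of $\psi_u$: a local minimum (in $\mathcal{M}_\tau^+$) and a global maximum (in $\mathcal{M}_\tau^-$). The map $u\mapsto t_u$ is $C^1$, and
\[
m_\tau^-:=\inf_{\mathcal{M}_\tau^-}E=\inf_{u\in S(\tau)}\max_{t\ge t_0}\psi_u(t)>0 .
\]
Here $\tau_3$ is exactly what makes the Gagliardo–Nirenberg term small enough that $\mathcal{M}_\tau^-$ stays at positive energy, bounded away from the set $\{T(u)\ \text{small}\}$ where $\mathcal{M}_\tau^+$ lives.

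\textbf{Step 1: a Palais–Smale sequence.} We will restrict to radial functions, since the rearrangement decreases $T$ and increases $A$. Using the minimax characterization together with the Ghoussoub/Jeanjean-type argument on $\{t_u\star u\}$ (or the Ekeland principle applied to $E$ on $\mathcal{M}_\tau^-$, which is a natural constraint because $\mathcal{M}_\tau^0=\emptyset$), we produce a sequence $u_n\in S(\tau)\cap W^{1,p}_r$ with
\[
E(u_n)\to m_\tau^-,\qquad M(u_n)\to 0,\qquad \|E|_{S(\tau)}'(u_n)\|\to 0,\qquad u_n^-\to 0 .
\]
The relation $M(u_n)\to0$, combined with $q\gamma_{p,q}<p$, gives boundedness in $W^{1,p}$. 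The Lagrange multipliers $\lambda_n$ are then bounded, so $\lambda_n\to\lambda$.

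\textbf{Step 2: the energy estimate, which we expect to be the main obstacle.} We must prove
\[
m_\tau^-<m_\tau^++\frac{2p^*_\alpha-p}{2p\,p^*_\alpha}\,\mathbb{S}^{\frac{2p^*_\alpha}{2p^*_\alpha-p}} .
\]
Since no explicit Choquard extremals exist for $p\ne2$, we will use the truncated Sobolev extremals $u_\varepsilon=\varphi\,U_\varepsilon$, with $U_\varepsilon$ the Talenti functions for $\Delta_p$. For these we have $\|\nabla u_\varepsilon\|_p^p=S^{N/p}+O(\varepsilon^{(N-p)/(p-1)})$, and $A(u_\varepsilon)$ is estimated from below through the relation $\mathbb{S}=S\,(A_{N,\alpha}C(N,\alpha))^{-p/(2p^*_\alpha)}$, i.e.\ by HLS equality along the Talenti profile. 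We will also need the bounds
\[
[u_\varepsilon]_{s,p}^p=O\bigl(\varepsilon^{p(1-s)}\bigr),\qquad
\|u_\varepsilon\|_p^p\sim\varepsilon^{p}\ \text{(or with a log/other exponent depending on } p^2\lessgtr N).
\]
We then test with $W_{\varepsilon,t}=u_\tau^++t\,u_\varepsilon$, renormalized to lie in $S(\tau)$ via the $\star$-scaling in the $L^p$ sense, and maximize the fiber. The interaction terms between $u_\tau^+$ and $u_\varepsilon$ require the elementary inequalities $|a+b|^p\le |a|^p+|b|^p+C(|a|^{p-1}|b|+|a||b|^{p-1})$, together with the positive gain from $\mu\|\cdot\|_q^q$ and from the Choquard cross term. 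The error terms $\varepsilon^{p(1-s)}$ (fractional seminorm) and $\varepsilon^{(N-p)/(p-1)}$ must be beaten by the gain, which is of order $\varepsilon^{(N-p)/p}$ or of order $\varepsilon^{N-\frac{N-p}{p}q}$. Comparing these exponents is precisely where conditions \eqref{Conditions_p,N,s} enter, split into the cases $p^2>N$ and $p^2\le N$. We will carry out this case analysis exponent by exponent.

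\textbf{Step 3: compactness and conclusion.} By the compact radial embedding, $u_n\rightharpoonup u$ and $\|u_n\|_q\to\|u\|_q$. We then show $\lambda<0$: if $\lambda\ge0$, the limit identities and the Pohozaev relation contradict $E$-positivity, as in Theorem \ref{non_existence}. We also show $u\ne0$, since otherwise $T(u_n)^p\sim A(u_n)\ge \mathbb{S}^{2p^*_\alpha/(2p^*_\alpha-p)}$, which forces $m_\tau^-\ge$ the threshold. Next, a Brezis–Lieb splitting for $\nabla$, the Gagliardo seminorm and $A$, combined with a.e.\ convergence of gradients (obtained by testing the equation with truncations, the standard argument for $p$-Laplacians), gives for $v_n=u_n-u$ either $T(v_n)\to0$ or
\[
\liminf E(v_n)\ge \frac{2p^*_\alpha-p}{2p\,p^*_\alpha}\,\mathbb{S}^{\frac{2p^*_\alpha}{2p^*_\alpha-p}} .
\]
In the latter case $E(u)\ge m_{\|u\|_p}\ge m_\tau^+$ (using that $u$ solves the equation, lies on the Pohozaev manifold, and $\|u\|_p\le\tau$ with monotonicity of $m$), which contradicts Step 2. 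Hence $T(v_n)\to0$. Since $\lambda<0$, the equation tested with $v_n$ also gives $\|v_n\|_p\to0$, so $u_n\to u$ strongly, $u\in S(\tau)$, $E(u)=m_\tau^->0>m_\tau^+$, and hence $u=:u_\tau^-\in\mathcal{M}_\tau^-$ is distinct from $u_\tau^+$.
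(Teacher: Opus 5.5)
Your architecture matches the paper's. You take a radial Ekeland/minimax sequence on $\mathcal{M}_\tau^-$, rescale the test paths $u_\tau^++tu_\epsilon$ into $S(\tau)$ to get the bound of Lemma \ref{Lemma 4.1}, and then argue compactness below $m_\tau^++\frac{2p^*_\alpha-p}{2pp^*_\alpha}\mathbb{S}^{\frac{2p^*_\alpha}{2p^*_\alpha-p}}$. Your boundedness argument via $q\gamma_{p,q}<p$ is correct, and it shows $\tau_3$ is not needed there.

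The decisive gap is in Step 2. The identity $\mathbb{S}=S\,(A_{N,\alpha}C(N,\alpha))^{-p/(2p^*_\alpha)}$, i.e.\ ``HLS equality along the Talenti profile'', is false for $p\neq2$. For $U(x)=(1+|x|^{p/(p-1)})^{-(N-p)/p}$ one has $U^{p^*_\alpha}=(1+|x|^{p/(p-1)})^{-(N+\alpha)/2}$, which has the extremal form of Proposition \ref{prop1} only when $p/(p-1)=2$. So only \eqref{S_and_S_alpha,p} holds. Your estimate needs the matching lower bound $A(u_\epsilon)\ge A_{N,\alpha}C(N,\alpha)\|u_\epsilon\|_{p^*}^{2p^*_\alpha}-o(1)$, which fails because HLS is strict on $U$.

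Worse, $U$ is not a minimizer of $\mathbb{S}$. If it were, then $-\Delta_pU=\kappa(I_\alpha*U^{p^*_\alpha})U^{p^*_\alpha-1}$, i.e.\ $I_\alpha*U^{p^*_\alpha}=c\,U^{p^*-p^*_\alpha}$. The right side decays like $|x|^{-p(N-\alpha)/(2(p-1))}$, a rate the left side cannot have unless $p=2$. Hence $Q(U):=\|\nabla U\|_p^pA(U)^{-p/(2p^*_\alpha)}>\mathbb{S}$. Since $Q$ is dilation invariant, $[u_\epsilon]_{s,p}\to0$, and $u_\epsilon$ decouples from $u_\tau^+$, the maximum of $E$ along your test paths tends to $m_\tau+\frac{2p^*_\alpha-p}{2pp^*_\alpha}Q(U)^{\frac{2p^*_\alpha}{2p^*_\alpha-p}}$. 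This lies above the threshold by an $\epsilon$-independent amount. Interaction gains vanish as $\epsilon\to0$ and cannot compensate, so no exponent comparison under \eqref{Conditions_p,N,s} rescues the argument. You would need bubbles built from an actual minimizer of $\mathbb{S}$, with its existence and sharp decay established.

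The paper does not supply this either. Its Lemma \ref{Lemma 4.1} runs the same Talenti-based argument, and in \eqref{eq_5.27} it places the upper bound \eqref{A(u_epsilon)} for $A(u_\epsilon)$ in a denominator.

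Second, the claim $\lambda<0$ in Step 3 is unsupported. From $M(u_n)\to0$ and the multiplier identity you get
\[
\lambda\tau^p=(1-s)\lim_{n\to\infty}[u_n]_{s,p}^p-\mu(1-\gamma_{p,q})\|u\|_q^q .
\]
Positivity of $E$ on $\mathcal{M}_\tau^-$ says nothing about this sign, and Theorem \ref{non_existence} concerns $q=p^*$ and does not apply. In the purely local problem the first term is absent, which is why the standard argument gives $\lambda<0$ there. Without $\lambda\neq0$ you cannot rule out mass loss, $\|u\|_p<\tau$, when $T(v_n)\to0$.

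The paper avoids testing with $v_n$. Instead it compares $c_0\circledast u_0\in\mathcal{M}^-_{r_0}$ with $m_\tau^-$ via the mass monotonicity of Lemma \ref{Lemma5.3}. But the sign condition needed there, $\mu(1-\gamma_{p,q})\|u\|_q^q>(1-s)[u]_{s,p}^p$, is the same one, and both it and $\lambda<0$ are obtained only ``for $\mu$ sufficiently large''. So you need that extra assumption or a new argument. The monotonicity $m_{\|u\|_p}\ge m_\tau^+$ you invoke must also be proved.
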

	\noindent 
	Denoting $S$ be the best constant corresponding to the imbedding $W^{1,p}(\mathbb{R}^N)\hookrightarrow L^{p^*}(\mathbb{R}^N)$, it is well known that:
	\begin{equation}\label{S}
		S=\inf_{W^{1,p}(\mathbb{R}^N)\setminus \{0\}}\frac{\left\| \nabla v \right\|_p^p}{\left\| u \right\|_{p^*}^p}.
	\end{equation}
	Furthermore, it is achieved by the family of extremal functions:
	\begin{equation}\label{U_epsilon}
		U_{\epsilon}(x)=\frac{K_{N,p}\epsilon^{\frac{(N-p)}{p(p-1)}}}{(\epsilon^{\frac{p}{p-1}}+|x|^{\frac{p}{p-1}})^{\frac{N-p}{p}}} \text{ with } \epsilon>0\text{ and } K_{N,p}=\left(N\left(\frac{N-p}{p-1}\right)^{p-1}\right)^{\frac{(N-p)}{p^2}}.
	\end{equation}
	Now, let us define:
	\begin{equation}\label{S_alpha,p}
		\mathbb{S}:=\inf_{W^{1,p}(\mathbb{R}^N)\setminus \{0\}}\frac{\left\| \nabla u \right\|_p^p}{A(u)^{\frac{p}{2p^*_{\alpha}}}}.
	\end{equation}
	By \autoref{prop1} we have:
	\begin{eqnarray*}
		A(u) & = & \int_{\mathbb{R}^N}\int_{\mathbb{R}^N}\frac{A_{N,\alpha}|u(x)|^{p^*_{\alpha}}|u(y)|^{p^*_{\alpha}}}{|x-y|^{N-\alpha}}
		\leq  A_{N,\alpha}C_{N,\alpha,p} 
		\left(\int_{\mathbb{R}^N}|u|^{p^*_{\alpha}\frac{2N}{N+\alpha}}\right)^{\frac{N+\alpha}{N}}\\
		& = & A_{N,\alpha}C_{N,\alpha,p}\left(\left\| u \right\|_{p^*}^{p^*}\right)^{\frac{N+\alpha}{N}}=A_{N,\alpha}C_{N,\alpha,p}\left(\left\| u \right\|_{p^*}^p\right)^{\frac{N+\alpha}{N-p}}\leq A_{N,\alpha}C_{N,\alpha,p}\left(\frac{\left\| \nabla u \right\|_p^p}{S}\right)^{\frac{2p^*_{\alpha}}{p}}.
	\end{eqnarray*}
	Thus,
	$$\frac{\left\| \nabla u \right\|_p^{2p^*_{\alpha}}}{A(u)}\geq \frac{S^{\frac{2p^*_{\alpha}}{p}}}{A_{N,\alpha}C_{N,\alpha,p}} \text{ for all } u\in W^{1,p}(\mathbb{R}^N)\setminus \{0\}.$$
	Hence, one gets
	\begin{equation}\label{S_and_S_alpha,p}
		\mathbb{S} \geq \frac{S}{(A_{N,\alpha}C_{N,\alpha,p})^{\frac{p}{2p^*_{\alpha}}}}.
	\end{equation}
	The relation \eqref{S_and_S_alpha,p} played an important role to prescribe the energy levels of Palais-Smale sequences and then to establish the existence of a second solution to \eqref{prob}.
	
	\noindent Another important result that was frequently used in our analysis, is the following Gagliardo-Nirenberg inequality:
	\begin{proposition}
		For any $u\in W^{1,p}(\mathbb{R}^N)$, we have:
		\begin{equation}\label{G_N_inequality}
			\left\| u \right\|_{\beta} \leq C_{N,p,\beta} \left\| \nabla u \right\|_p^{\theta} \left\| u \right\|_p^{1-\theta} \text{ for all } p\leq \beta \leq p^*,
		\end{equation}
		with $\theta=\frac{N(\beta-p)}{\beta p}$.
	\end{proposition}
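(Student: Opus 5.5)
The plan is to prove \eqref{G_N_inequality} by interpolating $L^\beta$ between $L^p$ and $L^{p^*}$ (Hölder) and then applying the Sobolev inequality \eqref{S}.

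\emph{Endpoints.} When $\beta=p$ we have $\theta=0$, and the inequality holds with constant $1$. When $\beta=p^*$ we have $\theta=1$, and the inequality is exactly \eqref{S} with $C_{N,p,p^*}=S^{-1/p}$. The relation $\|u\|_{p^*}^p\le S^{-1}\|\nabla u\|_p^p$ is valid for every $u\in W^{1,p}(\mathbb{R}^N)$, since $W^{1,p}(\mathbb{R}^N)\subset D^{1,p}(\mathbb{R}^N)$ and $S>0$ is the Sobolev constant.

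\emph{Interior case} $p<\beta<p^*$. Choose $\eta\in(0,1)$ with
\[
\frac1\beta=\frac{1-\eta}{p}+\frac{\eta}{p^*}.
\]
Hölder's inequality with conjugate exponents $\frac{p}{(1-\eta)\beta}$ and $\frac{p^*}{\eta\beta}$, applied to $|u|^\beta=|u|^{(1-\eta)\beta}|u|^{\eta\beta}$, gives
\[
\|u\|_\beta\le \|u\|_p^{1-\eta}\|u\|_{p^*}^{\eta}.
\]
Combining this with \eqref{S} yields
\[
\|u\|_\beta\le S^{-\eta/p}\|\nabla u\|_p^{\eta}\|u\|_p^{1-\eta}.
\]

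\emph{Identifying the exponent.} It remains to check that $\eta=\theta$. Using $\frac1p-\frac1{p^*}=\frac1N$, we get
\[
\eta=\frac{\frac1p-\frac1\beta}{\frac1p-\frac1{p^*}}=N\Big(\frac1p-\frac1\beta\Big)=\frac{N(\beta-p)}{\beta p}=\theta,
\]
so the inequality holds with $C_{N,p,\beta}=S^{-\theta/p}$.

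There is no real obstacle here; the only point needing care is the exponent identity $\eta=\theta$. This constant need not be optimal. If the paper later uses the sharp constant $C_{N,p,q}$ (as in $\tau_1,\tau_2,\tau_3$), one defines $C_{N,p,\beta}$ as the best constant in \eqref{G_N_inequality}; it is finite by the bound above, and the inequality still holds with this best constant.
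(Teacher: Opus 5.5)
Your proof is correct and follows essentially the same route as the paper: Hölder interpolation of $L^\beta$ between $L^p$ and $L^{p^*}$, then the Sobolev inequality \eqref{S}, which yields the constant $S^{-\theta/p}$. The only difference is the parametrization. The paper writes $\beta=tp^*+(1-t)p$ and sets $\theta=tp^*/\beta$ without checking that this equals $\frac{N(\beta-p)}{\beta p}$, whereas you interpolate the reciprocals $\frac1\beta$ and verify the identity $\eta=\theta$ explicitly.
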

	\begin{proof}
		Clearly, \eqref{G_N_inequality} is trivial for $\beta=p$, and the case of $\beta=p^*$ is actually the Sobolev inequality. Now, for any $\beta\in(p,p^*)$, since we can find $t\in (0,1)$ such that $\beta= tp^*+(1-t)p$, by H$\ddot{\text{o}}$lder's inequality and \eqref{S} we have:
		\begin{eqnarray*}
			\int_{\mathbb{R}^N} |u|^{\beta} & = & \int_{\mathbb{R}^N}|u|^{tp^*}|u|^{(1-t)p} \leq \left(\int_{\mathbb{R}^N}|u|^{p^*}\right)^{t}\left(\int_{\mathbb{R}^N}|u|^p\right)^{1-t}= \left\| u \right\|_{p^*}^{tp^*}\left\| u \right\|_p^{(1-t)p}\\
			& \leq & \frac{\left\| \nabla u \right\|_p^{tp^*}\left\| u \right\|_p^{(1-t)p}}{S^{\frac{tp^*}{p}}},
		\end{eqnarray*}
		taking $\theta=\frac{tp^*}{\beta}$, we get:
		\begin{eqnarray*}
			\left\| u \right\|_{\beta} & = & \left(\int_{\mathbb{R}^N}|u|^{\beta}\right)^{\frac{1}{\beta}} \leq \frac{\left\| \nabla u \right\|_p^{\frac{tp^*}{\beta}}\left\| u \right\|_p^{\frac{(1-t)p}{\beta}}}{S^{\frac{tp^*}{p\beta}}} = C_{N,p,\beta} \left\| \nabla u \right\|_p^{\theta}\left\| u \right\|_p^{1-\theta}.
		\end{eqnarray*}
		Thus we get \eqref{G_N_inequality}.
	\end{proof}
	\noindent \textbf{Scheme of the paper:} In section 2, we discuss the regularity of the solution, essential to establish the Pohozaev identity. In order to get this, we first constructed an iterative scheme as done in \cite{Nidhi2025Normlaized_MMA} to prove that a radially symmetric solution lies in $L^r(\mathbb{R}^N)$ for all $r\geq 1$. This helped us to attain the hypothesis of \cite[Theorem~1.4]{Garain2023Higher}, which gives us the required H$\ddot{\text{o}}$lder regularity. Having established regularity and hence the Pohozaev identity, we saw the non-existence of a solution in the scenario of $q=p^*$ and $\lambda<0$, it has been explained in the introduction section itself. Further, we looked for several existence results in the subsequent sections. Section 3 consists of the necessary groundwork for our main results, which involve the construction of the Pohozaev manifold, its distinct subsets ($\mathcal{M}_{\tau}^+,\mathcal{M}_{\tau}^-$ and $\mathcal{M}_{\tau}^0$) and the study of the infimum and supremum of the energy functional over these subsets. With all the necessary groundwork in our hand, in section 4, the first solution is established by proving the convergence of the Palais-Smale sequence up to a subsequence. Further, to prove the existence of a second solution, in section 5, a relation between $m_{\tau}^+$ and $m_{\tau}^-$ is deduced using \eqref{S_and_S_alpha,p} and the estimates of the Talenti function corresponding to $S$. It gave us the minimiser of the energy functional on $\mathcal{M}_{\tau}^-$ and hence the second solution. 
	A similar approach to get multiple solutions can be seen in \cite{Nidhi2025Existence,Sun2024Note}, but they all discuss the semilinear case. In the present paper, we are dealing with a more generalised quasilinear version involving  a combination of nonlinear operators as the classical $p$-Laplacian and fractional $p$-Laplacian operators, which makes our problem more technically complex and interesting to study. Note that contrary to the semilinear case, we do not know the extremal functions associated with $\mathbb{S}$, which creates significant difficulties, in particular in estimating critical energy levels that we overcome by an accurate fibering map analysis.
	
	\section{Regularity Results}
	\begin{proposition}\label{L_r_regularity}
		Suppose $u\in W^{1,p}(\mathbb{R}^N)$ be a radially symmetric solution of \eqref{prob} corresponding to some $\lambda<0$, then $u\in L^r(\mathbb{R}^N)$ for all $r\in [1,\infty)$.
	\end{proposition}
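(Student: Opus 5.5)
Two ranges of $r$ are handled separately: $r\in[1,p)$, where decay at infinity of the radial function together with $\lambda<0$ gives integrability, and $r\ge p$, where a Moser-type iteration (as in \cite{Nidhi2025Normlaized_MMA}) gives integrability near the origin and in $L^r$ for every finite $r$.

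\textbf{Large exponents.} For $r\ge p$, I would first recall that $u\in W^{1,p}(\mathbb{R}^N)$ gives $u\in L^t$ for $t\in[p,p^*]$. By the HLS inequality \eqref{HLS}, the potential $\phi:=I_\alpha*|u|^{p^*_\alpha}$ lies in $L^{2N/(N-\alpha)}$, since $|u|^{p^*_\alpha}\in L^{2N/(N+\alpha)}$. Hence the critical term is $V(x)|u|^{p-2}u$ with
\[
V=\phi\,|u|^{p^*_\alpha-p}\in L^{N/p}(\mathbb{R}^N),
\]
and the same holds for $\mu|u|^{q-p}$. I would then test \eqref{sol} with $v=u\min\{|u|,L\}^{p(\beta-1)}$, $\beta>1$, i.e.\ truncated powers. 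For the local part I use the standard lower bound $\int|\nabla u|^{p-2}\nabla u\nabla v\gtrsim \beta^{-p}\|\nabla(u\,u_L^{\beta-1})\|_p^p$. For the nonlocal part I use the elementary inequality $|a-b|^{p-2}(a-b)(a|a_L|^{p(\beta-1)}-b|b_L|^{p(\beta-1)})\ge 0$, so that this term can simply be dropped. Since $\lambda<0$, the term $\lambda\int|u|^{p}u_L^{p(\beta-1)}$ sits on the good side. To absorb $\int V|u|^p u_L^{p(\beta-1)}$, I split $V=V\chi_{\{V>M\}}+V\chi_{\{V\le M\}}$ with $\|V\chi_{\{V>M\}}\|_{N/p}$ small and apply Sobolev. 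This is the Brezis--Kato argument, and it yields $u\in L^{\beta p^*}$ whenever $u\in L^{\beta p}$. Iterating $\beta_{k+1}=\beta_k p^*/p$ gives $u\in L^r$ for all $r\in[p,\infty)$.

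\textbf{Small exponents.} For $r\in[1,p)$, decay at infinity is the issue and radial symmetry enters. By the Strauss-type radial lemma for $W^{1,p}_r$, $|u(x)|\le C|x|^{-(N-1)/p}\|u\|$ for $|x|\ge1$, so $u$ is bounded and small outside large balls. To get integrability of $|u|^r$ for $r<p$, I would exploit $\lambda<0$: far out, $\mu|u|^{q-2}u+\phi|u|^{p^*_\alpha-2}u$ is $o(|u|^{p-1})$, because $\phi$ is bounded and decays, $q>p$, $p^*_\alpha>p$, and $u\to0$. Then $u$ is a subsolution of
\[
-\Delta_p w+(-\Delta_p)^s w+\tfrac{|\lambda|}{2}|w|^{p-2}w\le 0
\]
outside $B_R$. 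I would compare $|u|$ with a barrier $W(x)=C|x|^{-m}$, with $m$ large but $m<N+sp$ for the nonlocal part, or with an exponential cutoff. The barrier should be a supersolution for $|x|>R$; this works because the zero-order term $|\lambda|W^{p-1}$ dominates both $\Delta_pW$ and $(-\Delta_p)^sW=O(|x|^{-N-sp})$ contributions. A weak comparison principle for the mixed operator then gives $|u|\le C|x|^{-m}$, and with $mr>N$ this shows $u\in L^r$ near infinity for every $r\ge1$. Local $L^r$ is immediate from the large-exponent case on bounded sets.

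\textbf{Main obstacle.} I expect the decay step to be the hardest. The fractional $p$-Laplacian of a power-type barrier only decays like $|x|^{-N-sp}$, so the barrier exponent is capped and must be matched against $r\ge1$, requiring $m>N$. The comparison principle for the mixed nonlocal operator on exterior domains also needs the tails handled carefully. If the barrier approach is too delicate, a fallback is a second iteration on the exterior region, testing with $(|u|-k)_+^{\,\cdot}$-type functions and using $\lambda<0$ to control $\int_{|x|>R}|u|^{p-1}\cdot|u|^{r-p+1}$ downward in $r$, combined with the radial pointwise decay.
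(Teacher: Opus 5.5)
There is a genuine gap in your treatment of $r\in[1,p)$: the exponent cap on the barrier is wrong. For comparison in the exterior domain you need $W\ge|u|$ on all of $B_R$, where $u$ carries positive mass. So at large $|x|$ the nonlocal term satisfies $(-\Delta_p)^sW(x)\le -c\,|x|^{-N-sp}\int_{B_R}W^{p-1}\,dy$ up to lower-order terms, and this is negative. The only term available to compensate it is $\frac{|\lambda|}{2}W^{p-1}\sim|x|^{-m(p-1)}$; the local term is of lower order $|x|^{-m(p-1)-p}$. The constraint is therefore $m(p-1)\le N+sp$, not $m<N+sp$. Combined with $mr>N$, the barrier gives integrability at infinity only for $r>\frac{N(p-1)}{N+sp}$. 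This covers $r=1$ only when $sp>N(p-2)$: always for $p=2$, but never for, e.g., $p=3$, $N=4$.

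Your fallback exterior iteration cannot repair this, because the obstruction is real. Run the same tail computation from below, with a subsolution equal to a small bump inside $B_R$ and to $\delta|x|^{-(N+sp)/(p-1)}$ outside, using $\lambda<0$ and the nonnegativity of the right-hand side. This shows that any nonnegative nontrivial solution (such as $u_{\tau}^+$) satisfies $u(x)\gtrsim|x|^{-(N+sp)/(p-1)}$ for large $|x|$. Hence, when $sp\le N(p-2)$, such a solution is not in $L^r(\mathbb{R}^N)$ for $1\le r\le\frac{N(p-1)}{N+sp}$, and the claim for small $r$ cannot be proved in the stated generality.

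The paper disposes of $r<p$ in one line by citing a radial lemma once $u\in L^r$ for $r\ge p$ is known. Radial lemmas give at best $|u(x)|\lesssim|x|^{-N/p}$, which does not make $|u|^r$ integrable at infinity for $r<p$, so the paper does not close this range either. What is used later, in the H\"older regularity theorem, is only $u\in L^r(\mathbb{R}^N)$ for $r\ge p$ and $u\in L^r_{loc}(\mathbb{R}^N)$ for $r\ge 1$, and your large-exponent argument delivers exactly that.

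That part is correct and somewhat cleaner than the paper's route. The paper derives a Kato-type inequality for $|u|$ via $h_{\epsilon}(t)=\sqrt{\epsilon^2+t^2}$ and tests with $\min\{\gamma,|u|\}^{kp-p+1}$. It makes the critical and $q$-terms small by splitting on $\{|u|\ge\delta\}$ and iterates along $\left(\frac{N+\alpha}{2(N-p)}\right)^n p^*$, absorbing $\|u\|_{kp^*}^{kp}$ before its finiteness is known. You freeze $V=(I_\alpha*|u|^{p^*_\alpha})|u|^{p^*_\alpha-p}\in L^{N/p}$ and drop the nonlocal term by monotonicity of $t\mapsto t\min\{|t|,L\}^{p(\beta-1)}$. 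You also absorb only truncated quantities, which are finite a priori.

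One correction there: $|u|^{q-p}$ need not lie in $L^{N/p}$, since $N(q-p)/p<sp<p$. It does lie in $L^{N/p}+L^\infty$ (split at $|u|=1$), and your level splitting of the potential handles that in the same way.
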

	\begin{proof}
		For $\epsilon>0$, define function $$h_{\epsilon}(t):=\sqrt{\epsilon^2+t^2}\text{ for } t\in \mathbb{R},$$
		and $$g_{\epsilon}:= h'_{\epsilon}(t)=\frac{t}{\sqrt{\epsilon^2+t^2}}.$$
		Clearly, $g_{\epsilon}\in C^1(\mathbb{R})$, with $g_{\epsilon}(0)=0$ and $|g_{\epsilon}(t)|\leq \frac{1}{\epsilon}=M_{\epsilon}$ for all $t\in \mathbb{R}$. Thus, by \cite[Theorem~2.2.3]{Kesavan2019Topics}, $g_{\epsilon}(u)\in W^{1,p}(\mathbb{R}^N)$ and hence $\psi=\phi|g_{\epsilon}(u)|^{p-2}g_{\epsilon}(u)\in W^{1,p}(\mathbb{R}^N)$ for any $0<\phi\in C_{c}^{\infty}(\mathbb{R}^N)$. Taking $\psi$ as test function we get
		\begin{equation}\label{R_2.1}
			\int_{\mathbb{R}^N}|\nabla u |^{p-2}\nabla u \nabla \psi +\ll u,\psi \gg = \lambda \int_{\mathbb{R}^N}|u|^{p-2}u \psi
			+\mu\int_{\mathbb{R}^N}|u|^{q-2}u\psi +\int_{\mathbb{R}^N}(I_{\alpha}*|u|^{p^*_{\alpha}})|u|^{p^*_{\alpha}-2}u\psi.
		\end{equation}
		Denoting
		\begin{eqnarray*}
			I_1^{\epsilon}& := &\int_{\mathbb{R}^N}|\nabla u|^{p-2}\nabla u \nabla \psi \\
			& = & (p-1)\int_{\mathbb{R}^N}|\nabla u|^p|g_{\epsilon}(u)|^{p-2}g_{\epsilon}'(u)+\int_{\mathbb{R}^N}g_{\epsilon}(u)|g_{\epsilon}(u)|^{p-2}|\nabla u|^{p-2}\nabla u \nabla \phi,
		\end{eqnarray*}
		and 
		\begin{eqnarray*}
			I_2^{\epsilon}& := & \int_{\mathbb{R}^N}\int_{\mathbb{R}^N}\frac{|h_{\epsilon}(u(x))-h_{\epsilon}(u(y))|^{p-2}(h_{\epsilon}(u(x))-h_{\epsilon}(u(y)))(\phi(x)-\phi(y))}{|x-y|^{N+sp}}dxdy,
		\end{eqnarray*}
		then we have
		\begin{eqnarray*}
			I_2^{\epsilon}	& \leq & \int_{\mathbb{R}^N}\int_{\mathbb{R}^N}\frac{|u(x)-u(y)|^{p-2}(u(x)-u(y))(\phi(x)|g_{\epsilon}(u)(x)|^{p-2}g_{\epsilon}(u)(x)-\phi(y)|g_{\epsilon}(u)(y)|^{p-2}g_{\epsilon}(u)(y))}{|x-y|^{N+sp}}\\
			& = & \ll u, \psi \gg,
		\end{eqnarray*}
		by convexity of $h_{\epsilon}$ (see \cite[Lemma~A.1]{Brasco2016second}). Thus, by \eqref{R_2.1} we get
		\begin{eqnarray*}
			I_1^{\epsilon}+I_2^{\epsilon} + (-\lambda)\int_{\mathbb{R}^N}|u|^{p-2}u\psi & \leq & \mu \int_{\mathbb{R}^N}|u|^{q-2}u\psi +\int_{\mathbb{R}^N}(I_{\alpha}*|u|^{p^*_{\alpha}})|u|^{p^*_{\alpha}-2}u\psi\\
			& \leq & \mu \int_{\mathbb{R}^N}|u|^{q-1}\phi +\int_{\mathbb{R}^N}(I_{\alpha}*|u|^{p^*_{\alpha}})|u|^{p^*_{\alpha}-1}\phi,
		\end{eqnarray*}
		since $g_{\epsilon}\leq 1$.
		Further, since $h_{\epsilon}(t)\rightarrow |t|$, $g_{\epsilon}(t)\rightarrow Sgn(t)$ and $g'_{\epsilon}(t)\rightarrow 0$ as $\epsilon\rightarrow 0$, by Fatou's lemma we get
		\begin{eqnarray}\label{R_2.2}
			&& \int_{\mathbb{R}^N}|\nabla u|^{p-2}\nabla |u| \nabla \phi + \ll |u|, \phi \gg + (-\lambda) \int_{\mathbb{R}^N} |u|^{p-1} \phi \nonumber \\
			&& \leq \liminf_{\epsilon \rightarrow 0}\left(I_1^{\epsilon}+I_2^{\epsilon} + (-\lambda)\int_{\mathbb{R}^N}|u|^{p-2}u\psi\right)\nonumber\\
			&& \leq \mu \int_{\mathbb{R}^N}|u|^{q-1}\phi +\int_{\mathbb{R}^N}(I_{\alpha}*|u|^{p^*_{\alpha}})|u|^{p^*_{\alpha}-1}\phi,
		\end{eqnarray}
		for all $0<\phi \in C_c^{\infty}(\mathbb{R}^N)$, and hence by density, \eqref{R_2.2} holds for all $0<\phi \in W^{1,p}(\mathbb{R}^N)$. Next, for $\gamma>0$, we define
		$$u_\gamma(x):=\min\{\gamma, |u(x)|\}>0,$$
		and take $\phi = u_{\gamma}^{\beta}\in W^{1,p}(\mathbb{R}^N)$ where $\beta=kp-p+1\geq 1$ for some $k\geq 1$, in \eqref{R_2.2}. Thus, using \cite[Lemma~3.1]{Biswas2023Regularity} we get
		\begin{eqnarray}\label{R_2.3}
			&& \frac{\beta p^p}{(\beta+p-1)^{p}}\left(\left\| \nabla u_{\gamma}^k\right\|_p^p+ [u_{\gamma}^k]_{s,p}^p\right)+(-\lambda) \left\| u_{\gamma}^k \right\|_p^p\nonumber\\
			&& \leq \beta \int_{\{|u(x)|\leq \gamma\}} | u_{\gamma} |^{\beta-1}|\nabla u_{\gamma}|^p +\ll |u|, u_{\gamma}^{\beta} \gg+(-\lambda) \int_{\mathbb{R}^N}|u|^{p-1}u_{\gamma}^{\beta}\nonumber\\
			&& = \int_{\mathbb{R}^N} \nabla u_{\gamma}^{\beta}\nabla |u| |\nabla u|^{p-2}+\ll |u|, u_{\gamma}^{\beta} \gg+(-\lambda) \int_{\mathbb{R}^N}|u|^{p-1}u_{\gamma}^{\beta},
		\end{eqnarray} 
		since \begin{eqnarray*}
			\beta \int_{\{|u(x)|\leq \gamma\}} | u_{\gamma} |^{\beta-1}|\nabla u_{\gamma}|^p & = & \int_{\{|u(x)|\leq \gamma\}}\beta u_{\gamma}^{\beta-1}|\nabla |u||^{p-2}\nabla |u| \nabla |u|\\
			& = & \beta \int_{\mathbb{R}^N}|\nabla |u||^{p-2} u_{\gamma}^{\beta-1} \nabla u_{\gamma}\nabla |u|\\
			& = & \int_{\mathbb{R}^N} |\nabla |u||^{p-2}\nabla u_{\gamma}^{\beta}\nabla |u|.
		\end{eqnarray*}
		Now, by \eqref{R_2.3} and \eqref{R_2.2}, we get
		\begin{eqnarray}\label{R_2.4}
			&& \frac{\beta}{k^p}\left(\left\| \nabla u_{\gamma}^k\right\|_p^p+ [u_{\gamma}^k]_{s,p}^p\right)+(-\lambda) \left\| u_{\gamma}^k \right\|_p^p\nonumber\\
			&&=\frac{\beta p^p}{(\beta+p-1)^{p}}\left(\left\| \nabla u_{\gamma}^k\right\|_p^p+ [u_{\gamma}^k]_{s,p}^p\right)+(-\lambda) \left\| u_{\gamma}^k \right\|_p^p\nonumber\\
			& & \leq   \mu \int_{\mathbb{R}^N}|u|^{q-1}u_{\gamma}^{\beta} +\int_{\mathbb{R}^N}(I_{\alpha}*|u|^{p^*_{\alpha}})|u|^{p^*_{\alpha}-1}u_{\gamma}^{\beta}.
		\end{eqnarray}
		Now, for some fixed $\delta>1$ using \autoref{prop1} and H$\ddot{\text{o}}$lder's inequality, we get
		\begin{eqnarray}\label{R_2.5}
			&&\int_{\mathbb R^N}(I_{\alpha}*|u|^{p^*_{\alpha}})|u|^{p^*_{\alpha}-1}u_{\gamma}^{\beta} 
			= \int_{\mathbb{R}^N}\frac{A_{N,\alpha}|u(x)|^{p^*_{\alpha}}|u(y)|^{p^*_{\alpha}-1}u_{\gamma}^{\beta}(y)}{|x-y|^{N-\alpha}}dxdy\nonumber\\
			&& \leq  C_1\left(\int_{\mathbb{R}^N}\left(|u(x)|^{p^*_{\alpha}-2}|u(x)u_{\gamma}^{\beta}(x)|\right)^{\frac{2N}{N+\alpha}}\right)^{\frac{N+\alpha}{N}}\nonumber\\
			&& = C_1 \left(\int_{\{|u(x)|<\delta\}}\left(|u(x)|^{p^*_{\alpha}-2}|u(x)u_{\gamma}^{\beta}(x)|\right)^{\frac{2N}{N+\alpha}}\right.\nonumber\\
			&& \left. +\int_{\{|u(x)|\geq \delta\}}\left(|u(x)|^{p^*_{\alpha}-2}|u(x)u_{\gamma}^{\beta}(x)|\right)^{\frac{2N}{N+\alpha}}\right)^{\frac{N+\alpha}{2N}}\nonumber\\
			&& \leq  C_2\left(\left(\int_{\{|u(x)|<\delta\}}\left(|u(x)|^{p^*_{\alpha}-2}|u(x)u_{\gamma}^{\beta}(x)|\right)^{\frac{2N}{N+\alpha}}\right)^{\frac{N+\alpha}{2N}}\right.\nonumber\\
			&&\left.+\left(\int_{\{|u(x)|\geq\delta\}}\left(|u(x)|^{p^*_{\alpha}-2}|u(x)u_{\gamma}^{\beta}(x)|\right)^{\frac{2N}{N+\alpha}}\right)^{\frac{N+\alpha}{2N}}\right)\nonumber\\
			& & \leq C_2\left(\left(\int_{\{|u(x)|<\delta\}}\left(|u|^{p^*_{\alpha}+\beta-1}\right)^{\frac{2N}{N+\alpha}}\right)^{\frac{N+\alpha}{2N}}+\left(\int_{\{|u(x)|\geq\delta\}}\left(|u|^{p^*_{\alpha}+\beta-1}\right)^{\frac{2N}{N+\alpha}}\right)^{\frac{N+\alpha}{2N}}\right)\nonumber\\
			&& \leq C_2\left(\delta^{p^*_{\alpha}-p}\left(\int_{\{|u(x)|<\delta\}}\left(|u|^{p+\beta-1}\right)^{\frac{2N}{N+\alpha}}\right)^{\frac{N+\alpha}{2N}}\right.\nonumber\\
			&&\left.+\left(\int_{\{|u(x)|\geq\delta\}}\left(|u|^{p^*_{\alpha}+\beta-1}\right)^{\frac{2N}{N+\alpha}}\right)^{\frac{N+\alpha}{2N}}\right)\nonumber\\
			&& =  C_2\left(\delta^{p^*_{\alpha}-p}\left(\int_{\{|u(x)|<\delta\}}\left(|u|^{kp}\right)^{\frac{2N}{N+\alpha}}\right)^{\frac{N+\alpha}{2N}}+\left(\int_{\{|u(x)|\geq\delta\}}\left(|u|^{p^*_{\alpha}-p+kp}\right)^{\frac{2N}{N+\alpha}}\right)^{\frac{N+\alpha}{2N}}\right)\nonumber\\
			&& \leq C_2 \left(\delta^{p^*_{\alpha}-p} \left\| u \right\|_{\left(\frac{2N}{N+\alpha}\right)kp}^{kp}+\left(\int_{\{|u(x)|\geq \delta\}}|u|^{p^*}\right)^{\frac{2p+\alpha-N}{2N}}\left(\int_{\{|u(x)|\geq \delta\}}|u|^{kp^*}\right)^{\frac{N-p}{N}}
			\right)\nonumber\\
			&& = C_2\delta^{p^*_{\alpha}-p} \left\| u \right\|_{\frac{2Nkp}{N+\alpha}}^{kp}+ C_2\bar{C}(\delta)^{\frac{p^*_{\alpha}-p}{p^*}}\left\| u \right\|_{kp^*}^{kp},
		\end{eqnarray}
		where $$\bar{C}(\delta)= \int_{\{|u(x)|\geq \delta\}}|u|^{p^*}.$$
		Also,
		\begin{eqnarray}\label{R_2.6}
			\int_{\mathbb{R}^N}|u|^{q-1}u_{\gamma}^\beta & = & \int_{\{|u(x)|<\delta\}} |u|^{q-1}u_{\gamma}^\beta+\int_{\{|u(x)|\geq \delta\}}|u|^{q-1}u_{\gamma}^\beta\nonumber\\
			& \leq & \delta^{q-p}\int_{\{|u(x)|<\delta\}}|u|^{p+\beta-1} + \int_{\{|u(x)|\geq \delta\}} |u|^{p^*-1+\beta}\nonumber\\
			& = & \delta^{q-p}\int_{\{|u(x)|<\delta\}}|u|^{kp} +\int_{\{|u(x)|\geq \delta\}}|u|^{p^*+kp-p} \nonumber\\
			& \leq & \delta^{q-p} \left\| u \right\|_{kp}^{kp}+ \left(\int_{\{|u(x)|\geq \delta\}}|u|^{kp^*}\right)^{\frac{p}{p^*}}\left(\int_{\{|u(x)|\geq \delta\}}|u|^{p^*}\right)^{\frac{p^*-p}{p^*}}\nonumber\\
			& \leq & \delta^{q-p}\left\| u \right\|_{kp}^{kp} +\bar{C}(\delta)^{\frac{p^*-p}{p^*}} \left\| u \right\|_{kp^*}^{kp}.
		\end{eqnarray}
		Using \eqref{R_2.5} and \eqref{R_2.6} in \eqref{R_2.4} we get
		\begin{eqnarray*}
			&&\frac{\beta}{k^p}\left(\left\| \nabla u_{\gamma}^k\right\|_p^p+ [u_{\gamma}^k]_{s,p}^p\right)+(-\lambda) \left\| u_{\gamma}^k \right\|_p^p	\\
			&& \leq \mu\left(\delta^{q-p}\left\| u \right\|_{kp}^{kp} +\bar{C}(\delta)^{\frac{p^*-p}{p^*}} \left\| u \right\|_{kp^*}^{kp}\right)+ C_2\delta^{p^*_{\alpha}-p} \left\| u \right\|_{\frac{2Nkp}{N+\alpha}}^{kp}+ C_2\bar{C}(\delta)^{\frac{p^*_{\alpha}-p}{p^*}}\left\| u \right\|_{kp^*}^{kp}.
		\end{eqnarray*}
		Taking $C_3=\min\{1,\frac{-\lambda k^p}{\beta}\}$, we get
		\begin{equation*}
			\frac{C_3 \beta}{k^p} \left\| u_{\gamma}^k\right\|^p   \leq  \left( \mu \bar{C}(\delta)^{\frac{p^*-p}{p^*}}+\bar{C}(\delta)^{\frac{p^*_{\alpha}-p}{p^*}}\right)\left\| u \right\|_{kp^*}^{kp}+\mu \delta^{q-p}\left\| u \right\|_{kp}^{kp}+C_2\delta^{p^*_{\alpha}-p} \left\| u \right\|_{\frac{2Nkp}{N+\alpha}}^{kp}.
		\end{equation*}
		Hence, by the continuous imbedding $W^{1,p}(\mathbb{R}^N)\hookrightarrow L^{p^*}(\mathbb{R}^N)$ and Fatou's lemma
		\begin{eqnarray*}
			\left\| u \right\|_{kp^*}^{kp} & = & \left\| u^k \right\|_{p^*}^{p}\leq \liminf_{\gamma \rightarrow \infty} \left\| u_{\gamma}^k\right\|_{p^*}^{p}
			\leq \liminf_{\gamma \rightarrow \infty} C_4 \left\| u_{\gamma}^k \right\|^p \\
			& \leq & \frac{C_4k^p}{C_3 \beta}\left( \left( \mu \bar{C}(\delta)^{\frac{p^*-p}{p^*}}+\bar{C}(\delta)^{\frac{p^*_{\alpha}-p}{p^*}}\right)\left\| u \right\|_{kp^*}^{kp}+\mu \delta^{q-p}\left\| u \right\|_{kp}^{kp}+C_2\delta^{p^*_{\alpha}-p} \left\| u \right\|_{\frac{2Nkp}{N+\alpha}}^{kp}\right)\\
			& \leq & C_5k^{p-1}\left( \left( \mu \bar{C}(\delta)^{\frac{p^*-p}{p^*}}+\bar{C}(\delta)^{\frac{p^*_{\alpha}-p}{p^*}}\right)\left\| u \right\|_{kp^*}^{kp}+\mu \delta^{q-p}\left\| u \right\|_{kp}^{kp}+C_2\delta^{p^*_{\alpha}-p} \left\| u \right\|_{\frac{2Nkp}{N+\alpha}}^{kp}\right).
		\end{eqnarray*}
		Choosing $\delta>1$ large enough so that $ C':=\left( \mu \bar{C}(\delta)^{\frac{p^*-p}{p^*}}+\bar{C}(\delta)^{\frac{p^*_{\alpha}-p}{p^*}}\right)C_5k^{p-1}<1$, we get
		\begin{eqnarray*}
			(1-C')\left\| u \right\|_{kp^*}^{kp} & \leq & C_5 k^{p-1}\left(\mu \delta^{q-p}\left\| u \right\|_{kp}^{kp}+C_2\delta^{p^*_{\alpha}-p} \left\| u \right\|_{\frac{2Nkp}{N+\alpha}}^{kp}\right)\\
			& \leq & \hat{C}k^{p-1}\delta^{p^*_{\alpha}-p}\left( \left\| u \right\|_{kp}^{kp}+\left\| u \right\|_{\frac{2Nkp}{N+\alpha}}^{kp}\right),	
		\end{eqnarray*}
		where $\hat{C}=C_5\max\{C_2,\mu\}$.
		Therefore
		\begin{equation}\label{R_2.7}
			\left\| u \right\|_{kp^*} \leq \tilde{C}k^{\frac{p-1}{kp}}\left(\left\| u \right\|_{\frac{2Nkp}{N+\alpha}}^{kp}+\left\| u \right\|_{kp}^{kp}\right)^{\frac{1}{kp}}
		\end{equation}
		with $\tilde{C}=\left(\frac{\hat{C}\delta^{(p^*_{\alpha}-p)}}{1-C'}\right)$. Define the sequence $\{r_n\}$ as follows:
		$$r_n=\left(\frac{N+\alpha}{2(N-p)}\right)^n>1 \text{ for all } n\in \mathbb{N}.$$
		For $N-\alpha<2p$, clearly the sequence $r_n\rightarrow \infty$ as $n\rightarrow \infty$. Now, since $u\in W^{1,p}(\mathbb{R}^N)\hookrightarrow L^r(\mathbb{R}^N)$ for all $r\in [p,p^*]$, we get
		$$\tilde{C}k^{\frac{p-1}{kp}}\left(\left\| u \right\|_{\frac{2Nkp}{N+\alpha}}^{kp}+\left\| u \right\|_{kp}^{kp}\right)^{\frac{1}{kp}}<+\infty \text{ for all } k\in [1,r_1],$$
		and hence by \eqref{R_2.7}
		\begin{equation}\label{Iteration_1}
			u\in L^r(\mathbb{R}^N) \text{ for all } r \in [p,p^*]\cup[p^*,r_1p^*]=[p,r_1p^*].
		\end{equation}
		Now, by \eqref{Iteration_1}
		$$\tilde{C}k^{\frac{p-1}{kp}}\left(\left\| u \right\|_{\frac{2Nkp}{N+\alpha}}^{kp}+\left\| u \right\|_{kp}^{kp}\right)^{\frac{1}{kp}}<+\infty \text{ for all } k\in [1,r_2],$$
		and hence by \eqref{R_2.7}
		\begin{equation*}
			u \in L^r(\mathbb{R}^N) \text{ for all } r\in [p,p^*]\cup [p^*,r_2p^*]=[p,r_2p^*].
		\end{equation*}
		Moving on in this way, we get $u\in L^r(\mathbb{R}^N)$ for all $r\in [p,r_np^*]$ and $n\in \mathbb{N}$. Hence, since $r_n\rightarrow \infty$, we get $u\in L^r(\mathbb{R}^N)$ for all $r\geq p$. Moreover, since $u\in L^r_{loc}(\mathbb{R}^N)$ for all $r\geq 1$ and it is radially symmetric, by radial Lemma \cite{Yuan2013radial} it must lie in $L^r(\mathbb{R}^N)$ for all $r\in [1,\infty)$.
	\end{proof}
	\noindent Further, the above proposition, together with \cite[Theorem~1.4]{Garain2023Higher} gives us the following H$\ddot{\text{o}}$lder regularity.
	\begin{theorem}
		Suppose $N-\alpha < 2p$ and $u\in W^{1,p}(\mathbb{R}^N)$ is a weak solution of \eqref{prob}, then $u\in C^{\delta}_{loc}(\mathbb{R}^N)$ for all $0<\delta<\Theta$, where
		$\Theta =\min\left\{1,\frac{sp}{p-1}\right\}>s.$
	\end{theorem}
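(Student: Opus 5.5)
The plan is to reduce the Hölder regularity to the local regularity result \cite[Theorem~1.4]{Garain2023Higher} for mixed local--nonlocal quasilinear equations. That result states: if $-\Delta_p u+(-\Delta_p)^s u=f$ with $f\in L^\infty_{loc}(\mathbb{R}^N)$ (and $u$ in the appropriate tail space, automatic for $u\in W^{1,p}(\mathbb{R}^N)\subset L^{p}$), then $u\in C^{\delta}_{loc}$ for every $\delta<\Theta=\min\{1,\frac{sp}{p-1}\}$. So it suffices to show that the right-hand side
$$f:=\lambda|u|^{p-2}u+\mu|u|^{q-2}u+(I_\alpha*|u|^{p^*_\alpha})|u|^{p^*_\alpha-2}u$$
is locally bounded, or at least lies in $L^{t}_{loc}$ with $t$ large enough for the cited theorem.

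\textbf{Step 1: $L^r$ bounds for $u$.} I would rerun the Moser-type iteration of \autoref{L_r_regularity}. The hypothesis $N-\alpha<2p$ is exactly what makes $r_n=\bigl(\frac{N+\alpha}{2(N-p)}\bigr)^n\to\infty$, so \eqref{R_2.7} gives $u\in L^r$ for all $r\in[p,\infty)$. If one only wants local information, the same argument with a cutoff $\phi\,u_\gamma^\beta$ works. For a general (not necessarily radial) weak solution, the coercive term $(-\lambda)\|u_\gamma^k\|_p^p$ is not essential. One can instead control $\|u_\gamma^k\|_p$ using the previous step of the iteration, since $kp<kp^*$ lies in the range already obtained. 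Hence the restrictions $\lambda<0$ and radial symmetry can be dropped for the purposes of $L^r_{loc}$ bounds.

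\textbf{Step 2: boundedness of the Riesz term.} With $|u|^{p^*_\alpha}\in L^1\cap L^{\infty-}$, I would split
$$I_\alpha*|u|^{p^*_\alpha}=\int_{|x-y|<1}+\int_{|x-y|\ge1}.$$
The far part is bounded by $A_{N,\alpha}\|u\|_{p^*_\alpha}^{p^*_\alpha}$; here $p^*_\alpha\ge p$ because $\alpha>N-2p$, so this norm is finite. The near part is bounded by Hölder's inequality with $|x|^{\alpha-N}\in L^{t'}(B_1)$ for $t'<\frac{N}{N-\alpha}$, paired against $|u|^{p^*_\alpha}\in L^{t}$ with $t$ large. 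Thus $I_\alpha*|u|^{p^*_\alpha}\in L^\infty(\mathbb{R}^N)$, and $f\in L^t_{loc}$ for every $t<\infty$.

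\textbf{Step 3: from $L^t$ to $L^\infty$.} Since $f\in L^t$ with $t>N/p$, a local De Giorgi/Moser iteration for the mixed operator gives $u\in L^\infty_{loc}$. The local boundedness part of \cite{Garain2023Higher}, or a second pass of Step 1 with test functions $\phi^p u_\gamma^\beta$ and the constant tracked as $k\to\infty$, supplies this. Then $f\in L^\infty_{loc}$, and \cite[Theorem~1.4]{Garain2023Higher} yields $u\in C^\delta_{loc}$ for all $\delta<\Theta$. Finally, $\Theta>s$ because $1>s$ and $\frac{sp}{p-1}>s$.

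\textbf{Main obstacle.} I expect the hard part to be Step 1 for a general weak solution. Under the paper's stated hypotheses, \autoref{L_r_regularity} is proved only for radial solutions with $\lambda<0$, and the iteration constant grows like $k^{(p-1)/(kp)}$ and must be summed along the iteration. If that becomes delicate, I would first try to keep the iteration at finite $k$, which needs only finitely many steps to reach $t>N/p$. I would then invoke the $L^\infty$ and Hölder theory of \cite{Garain2023Higher} directly with $f\in L^t_{loc}$, avoiding the $k\to\infty$ limit altogether.
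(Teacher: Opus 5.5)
Your route is the paper's route. The iteration of \autoref{L_r_regularity} gives $u\in L^r$ for $r\ge p$, the kernel split at $|y|=1$ gives $I_\alpha*|u|^{p^*_\alpha}\in L^\infty(\mathbb{R}^N)$, and \cite[Theorem~1.4]{Garain2023Higher} concludes. Two of your deviations deserve comment.

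\textbf{Generality.} You are right that \autoref{L_r_regularity} is proved only for radial solutions with $\lambda<0$, while the theorem concerns arbitrary weak solutions; the paper applies it without comment. Your repair is sound:
\begin{itemize}
\item Radial symmetry is used only at the end, to reach exponents below $p$, and these are not needed here.
\item The $\lambda$-term can be moved to the right and bounded by $|\lambda|\,\|u\|_{kp}^{kp}$. This is finite from the previous stage, because $k\le r_n$ forces $kp\le r_{n-1}p^*_\alpha\le r_{n-1}p^*$.
\item For this you must use the homogeneous inequality $S\|v\|_{p^*}^p\le\|\nabla v\|_p^p$ instead of the full $W^{1,p}$ norm.
\end{itemize}

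\textbf{Step~3 is unnecessary.} In the form the paper uses it, the cited theorem accepts $f\in L^{r_0}_{loc}$ with $r_0>N/p$. It then gives every $\delta<\min\{\frac{p-N/r_0}{p-1},\frac{sp}{p-1},1\}$. For $r_0>N$ the first entry exceeds $1$, so $f\in L^t_{loc}$ for all $t<\infty$ already yields the full range $\delta<\Theta$. This is how the paper concludes, with $r_0=\max\{N,\frac{1}{p-1}\}+1$.

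\textbf{The gap in Step~1.} As written, your Step~1 inherits a flaw from the paper's derivation of \eqref{R_2.7}. There the right-hand side is estimated after replacing $u_\gamma^\beta$ by $|u|^\beta$. This produces the untruncated quantity $\|u\|_{kp^*}^{kp}$, and $C'\|u\|_{kp^*}^{kp}$ is then absorbed into the left. That absorption presupposes $u\in L^{kp^*}$, which is exactly what is being proved. With the test function $u_\gamma^\beta$ there is no obvious way to keep the truncation: on $\{|u|>\gamma\}$ the right side contains $|u|^{p-1}\gamma^\beta$, which is not dominated by $u_\gamma^{kp}=\gamma^{kp}$.

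\textbf{The fix.} The standard remedy is the Brezis--Kato test function $\phi=|u|\min\{|u|,\gamma\}^{p(k-1)}$ inserted in \eqref{R_2.2}. Set $w_\gamma=|u|\min\{|u|,\gamma\}^{k-1}$.
\begin{itemize}
\item The nonlocal term is nonnegative and can be dropped, since $t\mapsto t\min\{t,\gamma\}^{p(k-1)}$ is nondecreasing.
\item The local term dominates $k^{-p}\|\nabla w_\gamma\|_p^p$.
\item The right side becomes $\int a\,w_\gamma^p$ with $a=|\lambda|+\mu|u|^{q-p}+(I_\alpha*|u|^{p^*_\alpha})|u|^{p^*_\alpha-p}$.
\item Split at $\{|u|\ge\delta\}$ and use HLS with $\frac{N+\alpha}{2N}=\frac{p^*_\alpha-p}{p^*}+\frac{p}{p^*}$. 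The pieces on $\{|u|\ge\delta\}$ are then bounded by a multiple of $\|w_\gamma\|_{p^*}^p$ whose coefficient tends to $0$ as $\delta\to\infty$.
\item That quantity is finite for each fixed $\gamma$, because $w_\gamma\le\gamma^{k-1}|u|$, so the absorption is now legitimate.
\item The remaining pieces are bounded by a constant times $\|u\|_{kp}^{kp}+\|u\|_{2Nkp/(N+\alpha)}^{kp}$, which are finite from the previous stage.
\item Letting $\gamma\to\infty$ with Fatou's lemma then gives \eqref{R_2.7}.
\end{itemize}
This absorption step is the real obstacle in Step~1. The growth of $k^{(p-1)/(kp)}$, which you flagged, is irrelevant, since each finite exponent requires only finitely many stages.
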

	\begin{proof}
		We know that, for all $x\in \mathbb{R}^N$ 
		\begin{equation*}
			(I_{\alpha}*|u|^{p^*_{\alpha}})(x) = \int_{\mathbb{R}^N}\frac{A_{N,\alpha}|u(x-y)|^{p^*_{\alpha}}}{|y|^{N-\alpha}}dy = \int_{B_1(0)}\frac{A_{N,\alpha}|u(x-y)|^{p^*_{\alpha}}}{|y|^{N-\alpha}}dy+\int_{\mathbb{R}^N\setminus B_1(0)}\frac{A_{N,\alpha}|u(x-y)|^{p^*_{\alpha}}}{|y|^{N-\alpha}}dy.
		\end{equation*}
		For some $\gamma>\max\{\frac{N}{\alpha}, \frac{p}{p^*_{\alpha}}\}$, using \autoref{L_r_regularity} and the H$\ddot{\text{o}}$lder's inequality, we get
		\begin{equation*}
			\int_{B_1(0)}\frac{A_{N,\alpha}|u(x-y)|^{p^*_{\alpha}}}{|y|^{N-\alpha}}dy  \leq  C_\alpha \left(\int_{B_1(0)}\frac{dy}{|y|^{\frac{(N-\alpha)\gamma}{\gamma-1}}}\right)^{\frac{\gamma-1}{\gamma}}<M_1.
		\end{equation*}
		We also have:
		\begin{eqnarray*}
			&&\int_{\mathbb{R}^N\setminus B_1(0)}\frac{A_{\alpha}|u(x-y)|^{p^*_{\alpha}}}{|y|^{N-\alpha}}dy  \\
			&&\leq  A_{\alpha} \left(\int_{\mathbb{R}^N\setminus B_1(0)}|u(x-y)|^{\frac{2Np^*_{\alpha}}{N+\alpha}}\right)^{\frac{N+\alpha}{2N}}\left(\int_{\mathbb{R}^N\setminus B_1(0)}\frac{dy}{|y|^{(N-\alpha)\frac{2N}{N-\alpha}}}\right)^{\frac{N-\alpha}{2N}} <M_2.
		\end{eqnarray*}
		Therefore, there exists $M>0$ such that
		$(I_{\alpha}*|u|^{p^*_{\alpha}})(x)<M$. Thus,
		\begin{equation}\label{I_alpha_L_infinity}
			(I_{\alpha}*|u|^{p^*_{\alpha}})\in L^{\infty}(\mathbb{R}^N).
		\end{equation}	
		Now define the function $f:=\lambda |u|^{p-1}u+\mu|u|^{q-2}u+(I_{\alpha}*|u|^{p^*_{\alpha}})|u|^{p^*_{\alpha}-2}u$, then by \autoref{L_r_regularity}
		\begin{eqnarray*}
			\int_{\mathbb{R}^N}|f|^r & \leq & |\lambda|\int_{\mathbb{R}^N}|u|^{r(p-1)}+\mu \int_{\mathbb{R}^N}|u|^{r(q-1)}+\int_{\mathbb{R}^N}|(I_{\alpha}*|u|^{p^*_{\alpha}})|u|^{p^*_{\alpha}-1}|^{r}\\
			& \leq & |\lambda|\int_{\mathbb{R}^N}|u|^{r(p-1)}+\mu \int_{\mathbb{R}^N}|u|^{r(q-1)} +M^r\int_{\mathbb{R}^N}|u|^{r(p^*_{\alpha}-1)}\\
			& < & +\infty, \text{ for all } r\geq \frac{p}{p-1},
		\end{eqnarray*}
		since $q,p^*_{\alpha}>p$. Therefore $f\in L^r(\mathbb{R}^N)$ for all $r\geq \frac{p}{p-1}$ and $f\in L^r_{loc}(\mathbb{R}^N)$ for all $r\geq \frac{1}{p-1}$.
		Also, since, $u\in W^{1,p}_{loc}(\mathbb{R}^N)$, in order to use the result of Garain et al \cite[Theorem~1.4]{Garain2023Higher}, we need $u\in L^{p-1}_{sp}(\mathbb{R}^N)$ where
		$$L_{\beta}^{r}(\mathbb{R}^N)=\left\{u\in L^r_{loc}(\mathbb{R}^N): \int_{\mathbb{R}^N}\frac{|u|^r}{1+|x|^{N+\beta}}dx <+\infty\right\}, \text{ for } r \text{ and } \beta>0.$$
		Now, since $u\in L^r(\mathbb{R}^N)$ for all $r\geq p$, we get $u\in L^r_{loc}(\mathbb{R}^N)$ for all $r\in [1,\infty)$, and subsequently $u\in L^{p-1}_{loc}(\mathbb{R}^N)$, as $p\geq 2$. Further, by H$\ddot{\text{o}}$lder's inequality, we have:
		\begin{eqnarray*}
			\int_{\mathbb{R}^N}\frac{|u|^{p-1}}{1+|x|^{N+sp}} & \leq & \left(\int_{\mathbb{R}^N}\frac{dx}{(1+|x|^{N+sp})^p}\right)^{\frac{1}{p}}\left(\int_{\mathbb{R}^N}|u|^p\right)^{\frac{p-1}{p}}\\
			& = & C_{N}\left(\int_{0}^{\infty}\frac{t^{N-1}}{(1+t^{N+sp})^p}dt\right)^{\frac{p-1}{p}}\\
			& \leq & C_{N}\left( \int_{0}^{1}t^{N-1}dt+\int_{1}^{\infty}(1+t^{N+sp})^{\frac{N-1}{N+sp}-p}dt\right)^{\frac{p-1}{p}}\\
			& \leq & C_N \left( \frac{1}{N}+ \int_{1}^{\infty}\frac{dt}{(t^{N+sp})^{p-\frac{N-1}{N+sp}}}\right)^{\frac{p-1}{p}}<+\infty,
		\end{eqnarray*}
		since $Np+sp^2-N+1>1$. Hence $u\in C^{\delta}_{loc}(\mathbb{R}^N)$ for all 
		$$0<\delta<\min\left\{\frac{(p-\frac{N}{r_0})}{p-1},\frac{sp}{p-1},1\right\},$$
		here $r_0$ is such that $r_0>\frac{N}{p}$ and $f\in L^{r_0}_{loc}(\mathbb{R}^N)$. Taking $r_0=\max\{N,\frac{1}{p-1}\}+1>\frac{N}{p}$, we get
		$$\frac{(p-\frac{N}{r_0})}{p-1}>1.$$
		Thus $u\in C^{\delta}_{loc}(\mathbb{R}^N)$ for all $0<\delta<\min\{1,\frac{sp}{p-1}\}$. 
	\end{proof}
	\noindent Now, since any weak solution to \eqref{prob} lies in $C^{\delta}_{loc}(\mathbb{R}^N)$ for some $\delta>s$, by \cite[Theorem~2.5]{Anthal2025Pohozaev} and \cite[Theorem~A1]{Giacomoni2025Normalized}, it must satisfy the Pohozaev identity \eqref{Pohozaev_identity}. This identity will play an essential role in order to prove the existence of normalized solutions. Moreover, it ensures the non-existence for the case of $q=p^*$ and $\lambda<0$ as shown above in \autoref{non_existence}.
	
	\section{Preliminaries for Existence Results}
	\noindent In this section, we will establish the necessary groundwork required to deduce the final existence results.
	\begin{lemma}\label{Lemma 2.1}
		If $u\in S(\tau)$ is a solution of \eqref{prob}, corresponding to some $\lambda \in \mathbb{R}$, then 
		$u \in \mathcal{M}_{\tau}.$
	\end{lemma}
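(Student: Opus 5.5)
The plan is to combine two identities satisfied by any solution $u\in S(\tau)$ of \eqref{prob}: the Nehari-type identity from testing with $u$, and the Pohozaev identity \eqref{Pohozaev_identity}. Eliminating $\lambda$ between them should give exactly $M(u)=0$.

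\textbf{Nehari identity.} Since $u\in W^{1,p}(\mathbb{R}^N)$, we may take $v=u$ in \eqref{sol}. Using $\|u\|_p=\tau$ this gives
\begin{equation*}
\left\| \nabla u \right\|_p^p+[u]_{s,p}^p=\lambda\tau^p+\mu\left\| u \right\|_q^q+A(u).
\end{equation*}

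\textbf{Pohozaev identity.} By Theorem \ref{Regularity}, $u\in C^{\delta}_{loc}(\mathbb{R}^N)$ for some $\delta>s$. As noted after that theorem, the results of \cite{Anthal2025Pohozaev} and \cite{Giacomoni2025Normalized} then yield \eqref{Pohozaev_identity}.

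\textbf{Elimination of $\lambda$.} Multiply the Nehari identity by $N/p$ and subtract \eqref{Pohozaev_identity}. The $\lambda$-terms cancel. The coefficients of the remaining terms are:
\begin{itemize}
\item gradient term: $\frac{N}{p}-\frac{N-p}{p}=1$;
\item Gagliardo seminorm: $\frac{N}{p}-\frac{N-sp}{p}=s$;
\item $L^q$ term: $\mu\left(\frac{N}{p}-\frac{N}{q}\right)=\mu\gamma_{p,q}$;
\item Choquard term: $\frac{N}{p}-\frac{N-p}{p}=1$.
\end{itemize}
Hence
\begin{equation*}
\left\| \nabla u \right\|_p^p+s[u]_{s,p}^p=\mu\gamma_{p,q}\left\| u \right\|_q^q+A(u),
\end{equation*}
that is, $M(u)=0$. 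Together with $u\in S(\tau)$, this gives $u\in\mathcal{M}_{\tau}$.

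\textbf{Main obstacle.} The algebra is trivial; the only delicate point is the validity of the Pohozaev identity. That rests on the regularity theorem, which was established under $N-\alpha<2p$ (guaranteed by $\alpha>\max\{0,N-2p\}$). The radial $L^r$ bootstrap (Proposition \ref{L_r_regularity}) also used $\lambda<0$ and radial symmetry. In the write-up I would invoke the paper's stated claim that every weak solution satisfies \eqref{Pohozaev_identity}, and not re-verify these hypotheses.
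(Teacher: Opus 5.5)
Your proposal is correct and follows the paper's proof: test the equation with $u$ to get the Nehari identity, then eliminate $\lambda$ against the Poho\v{z}aev identity \eqref{Pohozaev_identity} to obtain $M(u)=0$. Your Nehari identity, with $\left\| \nabla u \right\|_p^p$, also fixes the typo $\left\| u \right\|_p^p$ on the right-hand side of the paper's display \eqref{2.1}. Your caveat about the regularity hypotheses behind the Poho\v{z}aev identity (the $L^r$ bootstrap assumes $\lambda<0$ and radial symmetry, while the H\"older theorem is stated for all weak solutions) points to a gap in the paper, not in your argument.
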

	\begin{proof}
		Since, $u\in S(\tau)$ solves \eqref{prob}, for some $\lambda\in \mathbb{R}$, then we have:
		\begin{equation}\label{2.1}
			\lambda \left\| u \right\|_p^p = \left\| u \right\|_p^p+[u]_{s,p}^p-\mu\left\| u \right\|_q^q-A(u).
		\end{equation}
		Using \eqref{2.1} in \eqref{Pohozaev_identity}, we get
		$$M(u)= \left\| \nabla u \right\|_p^p+s[u]_{s,p}^p-\mu\gamma_{p,q}\left\| u \right\|_q^q-A(u)=0$$
		where $\gamma_{p,q}= \frac{N(q-p)}{pq}$.
	\end{proof}
	\noindent Now, for any $u\in S(\tau)$, by \eqref{S_alpha,p} and Gagliardo-Nirenberg inequality \eqref{G_N_inequality}
	we have:
	\begin{equation}\label{2.3}
		E(u)  =  \frac{T(u)^p}{p}-\mu\frac{\left\| u \right\|_q^q}{q} -\frac{A(u)}{2p^*_{\alpha}}
		\geq  \frac{T(u)^p}{p}-\frac{\mu C_{N,p,q}}{q}T(u)^{q\gamma_{p,q}}\tau^{q-q\gamma_{p,q}}-\frac{T(u)^{2p^*_{\alpha}}}{2p^*_{\alpha}\mathbb{S}^{\frac{2p^*_{\alpha}}{p}}}.
	\end{equation}
	Defining,
	$$h(t):=\frac{t^p}{p}-\frac{\mu C_{N,p,q}}{q}t^{q\gamma_{p,q}}\tau^{q-q\gamma_{p,q}}-\frac{t^{2p^*_{\alpha}}}{2p^*_{\alpha}\mathbb{S}^{\frac{2p^*_{\alpha}}{p}}} \text{ for all } t>0,$$
	we get $E(u)\geq h(T(u))$. Let us discuss some properties of the function $h$, that will be useful in our analysis.
	\begin{lemma}\label{lemma 2.2}
		There exists $\tau_1>0$ such that for all $\tau\in (0,\tau_1)$, $h$ has a strict local minimum at negative level and a global maximum at a positive level. Also, we can find two positive constants $R_1>R_0$ such that $h(R_0)=0= h(R_1)$ with $h(t)>0$ if and only if $t\in (R_0,R_1)$.
	\end{lemma}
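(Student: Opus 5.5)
The plan is to study $h$ through its exponents. Write $a:=q\gamma_{p,q}$ and $b:=2p^*_{\alpha}$. Since $p<q<p+\frac{sp^2}{N}\le p+\frac{p^2}{N}$, we have $\gamma_{p,q}=\frac{N(q-p)}{pq}$ with $0<a=\frac{N(q-p)}{p}<p$. Also $b=p\frac{N+\alpha}{N-p}>p$. So $0<a<p<b$.

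Factor $h(t)=t^{a}\,\varphi(t)$, where
\begin{equation*}
\varphi(t):=\frac{t^{p-a}}{p}-\frac{t^{b-a}}{b\,\mathbb{S}^{b/p}}-\frac{\mu C_{N,p,q}}{q}\tau^{q-a}.
\end{equation*}
Then $h(t)>0$ exactly when $\varphi(t)>0$. The first two terms form $g(t):=\frac{t^{p-a}}{p}-\frac{t^{b-a}}{b\mathbb{S}^{b/p}}$. Its derivative is $g'(t)=t^{p-a-1}\bigl(\frac{p-a}{p}-\frac{b-a}{b\mathbb{S}^{b/p}}t^{b-p}\bigr)$, which vanishes at exactly one point $\bar t>0$. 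Hence $g$ is strictly increasing on $(0,\bar t)$, strictly decreasing on $(\bar t,\infty)$, tends to $0$ as $t\to0^+$, and tends to $-\infty$ as $t\to\infty$.

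Compute $\bar t=\bigl(\frac{b(p-a)\mathbb{S}^{b/p}}{p(b-a)}\bigr)^{1/(b-p)}$ and the maximum value $g(\bar t)>0$ explicitly. The condition $\max\varphi>0$ becomes
\begin{equation*}
\frac{\mu C_{N,p,q}}{q}\tau^{q(1-\gamma_{p,q})}<g(\bar t).
\end{equation*}
Solving for $\tau$ gives a threshold, and this should reproduce exactly the $\tau_1$ stated in the introduction. I will check the algebra by writing $g(\bar t)=\bar t^{\,p-a}\frac{b-p}{p(b-a)}$. For $\tau<\tau_1$, the function $\varphi$ is strictly increasing then strictly decreasing, starts at a negative value, has positive maximum $\varphi(\bar t)$, and tends to $-\infty$. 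So it has exactly two zeros $R_0<\bar t<R_1$, and $\varphi>0$ precisely on $(R_0,R_1)$. This gives the claimed zeros of $h$ and its sign structure.

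It remains to locate the extrema of $h$. On $(0,R_0)$ we have $h<0$ with $h(0^+)=0$ and $h(R_0)=0$, so $h$ attains a negative minimum there. On $[R_0,R_1]$, $h\ge0$ with a positive maximum. For $t>R_1$, $h<0$. Hence the maximum over $[R_0,R_1]$ is a global maximum at a positive level.

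For strictness of the local minimum, I write $h'(t)=t^{a-1}k(t)$ with
\begin{equation*}
k(t)=t^{p-a}-\frac{\mu C_{N,p,q}a}{q}\tau^{q-a}-\frac{t^{b-a}}{\mathbb{S}^{b/p}}.
\end{equation*}
By the same unimodality argument, $k$ has at most two zeros. Since $h$ has both an interior minimum and an interior maximum, $k$ has exactly two zeros, $t_0<t_1$. These are a strict local minimum $t_0\in(0,R_0)$ and a strict global maximum $t_1\in(R_0,R_1)$.

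I expect the main obstacle to be bookkeeping: matching the explicit threshold with the paper's formula for $\tau_1$. The qualitative shape follows cleanly from the unimodality of $g$ and of $k$.
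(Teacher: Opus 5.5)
Your proposal is correct and follows essentially the same route as the paper. You factor $h(t)=t^{q\gamma_{p,q}}\bar h(t)$ and use the unique critical point of $\bar h$ to obtain the two zeros $R_0<R_1$ and exactly the threshold $\tau_1$. You then count the critical points of $h$ through $h'(t)=t^{q\gamma_{p,q}-1}g_1(t)$, where $g_1$ differs by a constant from a function with a single critical point.

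The only difference is in phrasing. The paper argues by contradiction, noting that three critical points of $h$ would force $g_2$ to have two critical points. You instead state the unimodality of $g_1$ directly and read off the sign pattern of $h'$ (negative, positive, negative). This also makes the strictness of the local minimum explicit.
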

	\begin{proof}
		Since
		\begin{eqnarray*}
			h(t) & = & \frac{t^p}{p}-\frac{\mu C_{N,p,q}}{q}t^{q\gamma_{p,q}}\tau^{q-q\gamma_{p,q}}-\frac{t^{2p^*_{\alpha}}}{2p^*_{\alpha}
				\mathbb{S}	}\\
			& = &  t^{q\gamma_{p,q}}\left(\frac{t^{p-q\gamma_{p,q}}}{p}-\frac{\mu C_{N,q,p}}{q}\tau^{q(1-\gamma_{p,q})}-\frac{t^{2p^*_{\alpha}-q\gamma_{p,q}}}{2p^*_{\alpha}
				\mathbb{S}^{\frac{2p^*_{\alpha}}{p}}}\right),
		\end{eqnarray*}
		$h(t)>0$ if and only if $\bar{h}(t)>0$ where 
		$$\bar{h}(t)=\frac{t^{p-q\gamma_{p,q}}}{p}-\frac{\mu C_{N,q,p}}{q}\tau^{q(1-\gamma_{p,q})}-\frac{t^{2p^*_{\alpha}-q\gamma_{p,q}}}{2p^*_{\alpha}
			\mathbb{S}^{\frac{2p^*_{\alpha}}{p}}}.$$
		Now since 
		$$t_0=\left(\frac{2p^*_{\alpha}
			\mathbb{S}^{\frac{2p^*_{\alpha}}{p}}( p-q\gamma_{p,q})}{p(2p^*_{\alpha}-q\gamma_{p,q})}\right)^{\frac{1}{2p^*_{\alpha}-p}}$$
		is the only critical point of $\bar{h}$ corresponding to its global maximum, with $\bar{h}(0^+)=-\frac{\mu C_{N,q,p}}{q}\tau^{q(1-\gamma_{p,q})}$,
		\begin{eqnarray*}
			\bar{h}(t_0) & = & -\frac{\mu C_{N,p,q}}{q}\tau^{q(1-\gamma_{p,q})}+\frac{(2p^*_{\alpha}-p)(2p^*_{\alpha}
				\mathbb{S}^{\frac{2p^*_{\alpha}}{p}}(p-q\gamma_{p,q}))^{\frac{p-q\gamma_{p,q}}{2p^*_{\alpha}-p}}}{(p(2p^*_{\alpha}-q\gamma_{p,q}))^{\frac{2p^*_{\alpha}-q\gamma_{p,q}}{2p^*_{\alpha}-p}}}\\
			& > & 0 \text{ for all } \tau<\tau_1:=\left(\frac{q(2p^*_{\alpha}-p)(2p^*_{\alpha}
				\mathbb{S}^{\frac{2p^*_{\alpha}}{p}}(p-q\gamma_{p,q}))^{\frac{p-q\gamma_{p,q}}{2p^*_{\alpha}-p}}}{\mu C_{N,p,q}(p(2p^*_{\alpha}-q\gamma_{p,q}))^{\frac{2p^*_{\alpha}-q\gamma_{p,q}}{2p^*_{\alpha}-p}}}\right)^{\frac{1}{q(1-\gamma_{p,q})}},
		\end{eqnarray*}
		and $\bar{h}(t)\rightarrow -\infty$ as $t\rightarrow \infty$. Therefore, $\bar{h}$ should have the following curvature:
		\begin{center}
			\begin{tikzpicture}
				\draw[thick,->] (0,0)--(4,0);
				\draw[thick,<->](0,1)--(0,-1.5);
				\draw[thick,->](0,-1).. controls (2,2) .. (3,-1.5) node[anchor= south west]{$\bar{h}(t)$};
				\draw (2.9,0)--(2.9,0) node[anchor=north]{$R_1$};
				\draw (0.9,0)--(0.9,0) node[anchor=north]{$R_0$};
				\draw [dash dot] (1.8,1.1)--(1.8,0) node[anchor=north]{$t_0$};
				\draw (0,-1)--(0,-1) node[anchor=north east]{$\bar{h}(0)$};
			\end{tikzpicture}
		\end{center}
		and hence 
		\begin{equation}\label{eq_2.4}
			h(t)=\left\{\begin{array}{cl}
				0, & t=R_0 \text{ or } R_1,\\
				<0, & t\in (0,R_0)\cup (R_1,\infty),\\
				>0, & t\in (R_0,R_1),\\
				0^- &  t=0^+.
			\end{array}
			\right.
		\end{equation}
		Now, \eqref{eq_2.4} suggests that $h$ has atleast two critical points. Next, we claim that $h$ has exactly two critical points.
		Suppose that $h$ has atleast three critical points, then since
		\begin{equation*}
			h'(t)  =  t^{q\gamma_{p,q}-1}\left(t^{p-q\gamma_{p,q}}-\mu C_{N,p,q}\gamma_{p,q}\tau^{q(1-\gamma_{p,q})}-\frac{t^{2p^*_{\alpha}-q\gamma_{p,q}}}{\mathbb{S}^{\frac{2p^*_{\alpha}}{p}}}\right),
		\end{equation*}
		$g_1$ must have atleast $3$ roots, where
		$$g_1(t):=t^{p-q\gamma_{p,q}}-\mu C_{N,p,q}\gamma_{p,q}\tau^{q(1-\gamma_{p,q})}-\frac{1}{\mathbb{S}^{\frac{2p^*_{\alpha}}{p}}}t^{2p^*_{\alpha}-q\gamma_{p,q}}.$$
		Now, if we define $g_2$ as follows:
		$$g_2(t):= t^{p-q\gamma_{p,q}}-\frac{t^{2p^*_{\alpha}-q\gamma_{p,q}}}{\mathbb{S}^{\frac{2p^*_{\alpha}}{p}}},$$
		then $g_2$ attains $C:=\mu C_{N,p,q}\gamma_{p,q}\tau^{q(1-\gamma_{p,q})}$ atleast thrice, and hence has atleast two critical points. But since 
		$$g_2'(t)=(p-q\gamma_{p,q})t^{p-q\gamma_{p,q}-1}-\frac{(2p^*_{\alpha}-q\gamma_{p,q})}{\mathbb{S}^{\frac{2p^*_{\alpha}}{p}}}t^{2p^*_{\alpha}-q\gamma_{p,q}-1},$$
		$\bar{t}=\left(\frac{(p-q\gamma_{p,q})\mathbb{S}^{\frac{2p^*_{\alpha}}{p}}}{2p^*_{\alpha}-q\gamma_{p,q}}\right)^{\frac{1}{2p^*_{\alpha}-p}}$ is the unique critical point of $g_2$. Thus, by contradiction, $h$ has exactly two critical points corresponding to a local minimum at negative level and global maximum at positive level and the following geometry:
		\begin{center}
			\begin{tikzpicture}
				\draw[thick,->](0,0)--(5,0);
				\draw[thick,<->] (0,1)--(0,-1);
				\draw (1.5,0)--(1.5,0) node[anchor=north]{$R_0$};
				\draw (3,0)--(3,0) node[anchor=north]{$R_1$};
				\draw[thick,->](0,0).. controls (1.5,-2) and (1,2.25) .. (4,-1) node[anchor= west]{$h(t)$};
			\end{tikzpicture}
		\end{center}
		Hence, we are done.
	\end{proof}
	\noindent For any $u\in W^{1,p}(\mathbb{R}^N)$, let us define the fiber maps $\star$ and $\circledast$, as follows:
	$$(t\star u)(x):=e^{\frac{Nt}{p}}u(e^tx)\text{ for } t\in \mathbb{R}; \text{ and } (t\circledast u)(x):=t^{\frac{N}{p}}u(tx) \text{ for } t\geq 0.$$
	Clearly, $e^t\circledast u= t\star u$. Now, defining $\psi_{u}(t):=E(t\star u)$, one can notice that $M(t\star u )=\psi_{u}'(t)$. Also we have the following results about $\psi_{u}$:
	\begin{lemma}\label{Lemma 2.3}
		Let $u\in S(\tau)$ and $\tau <\tau_1$, then $\psi_{u}$ has exactly two zeroes and two critical points, that is, we can find unique $a_u<b_u<c_u<d_u$ such that $\psi_{u}'(a_u)=0=\psi_{u}'(c_u)$ and $\psi_{u}(b_u)=0=\psi_{u}(d_u)$. Moreover, we have the following:
		\begin{enumerate}
			\item $a_u\star u \in \mathcal{M}_{\tau}^+$ and $c_u\star u\in \mathcal{M}_{\tau}^-$. If $t\star u \in \mathcal{M}_{\tau}$, then either $t=a_u$ or $t=c_u$ and hence $\mathcal{M}_{\tau}^0$ is empty.
			\item $E(c_u\star u)=\max\{E(t\star u): t\in \mathbb{R}\}>0$ and $\psi_{u}$ is strictly decreasing in $(c_u,\infty)$.
			\item $T(t\star u)\leq R_0$ for every $t<b_u$ and $$E(a_u \star u)=\min\{E(t\star u): t\in \mathbb{R} \text{ and } T(t\star u)\leq R_0\}<0.$$
			\item The maps $\Phi_1:\mathcal{M}_{\tau}\rightarrow \mathbb{R}$ and $\Phi_2:\mathcal{M}_{\tau}\rightarrow \mathbb{R}$ defined as $\Phi_1(u):=a_u$ and $\Phi_2(u):=c_u$ are of class $C^1$.
		\end{enumerate}
	\end{lemma}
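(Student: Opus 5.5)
The plan is to write $\psi_u$ explicitly and exploit the ordering of the scaling exponents. For $u\in S(\tau)$ the scaling $t\star u$ preserves the $L^p$ norm. It also satisfies
$$\|\nabla(t\star u)\|_p^p=e^{pt}\|\nabla u\|_p^p,\quad [t\star u]_{s,p}^p=e^{spt}[u]_{s,p}^p,\quad \|t\star u\|_q^q=e^{q\gamma_{p,q}t}\|u\|_q^q,\quad A(t\star u)=e^{2p^*_{\alpha}t}A(u).$$
Here the last exponent uses $2p^*_\alpha\frac{N}{p}-(N+\alpha)=2p^*_\alpha$. Hence
$$\psi_u(t)=\tfrac{e^{pt}}{p}\|\nabla u\|_p^p+\tfrac{e^{spt}}{p}[u]_{s,p}^p-\tfrac{\mu e^{q\gamma_{p,q}t}}{q}\|u\|_q^q-\tfrac{e^{2p^*_\alpha t}}{2p^*_\alpha}A(u).$$
Since $q<p+\frac{sp^2}{N}$, we have $0<q\gamma_{p,q}<sp<p<2p^*_\alpha$. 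Two consequences follow. First, $\psi_u(t)\to0^-$ as $t\to-\infty$, because the negative $q$-term dominates there. Second, $\psi_u(t)\to-\infty$ as $t\to+\infty$.

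\textbf{Existence of at least two critical points.} The map $t\mapsto T(t\star u)^p=e^{pt}\|\nabla u\|_p^p+e^{spt}[u]_{s,p}^p$ is continuous and strictly increasing from $0$ to $\infty$. Since $\psi_u(t)\ge h(T(t\star u))$, Lemma \ref{lemma 2.2} gives some $t$ with $T(t\star u)\in(R_0,R_1)$, and there $\psi_u>0$. Combined with the limits above, $\psi_u$ has a negative local minimum and a positive global maximum, hence at least two critical points.

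\textbf{Exactly two critical points.} We bound the number of zeros of $\psi_u'$. Write $\psi_u'(t)=e^{q\gamma_{p,q}t}f(t)$ with
$$f(t)=\|\nabla u\|_p^pe^{(p-q\gamma_{p,q})t}+s[u]_{s,p}^pe^{(sp-q\gamma_{p,q})t}-\mu\gamma_{p,q}\|u\|_q^q-A(u)e^{(2p^*_\alpha-q\gamma_{p,q})t}.$$
Next, $f'(t)=e^{(sp-q\gamma_{p,q})t}g(t)$, where $g(t)=a_1e^{(p-sp)t}+b_1-d_1e^{(2p^*_\alpha-sp)t}$ with $a_1,b_1,d_1>0$. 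Then $g'$ has exactly one zero. So $g$ increases and then decreases, starting from $g(-\infty)=b_1>0$ and ending at $g(+\infty)=-\infty$, and therefore has exactly one zero. It follows that $f$ increases and then decreases, so $f$ (and hence $\psi_u'$) has at most two zeros. This yields exactly two critical points $a_u<c_u$: a local minimum and the global maximum.

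The zeros of $\psi_u$ then follow from monotonicity. $\psi_u$ is negative and decreasing on $(-\infty,a_u)$, increasing on $(a_u,c_u)$ with $\psi_u(c_u)>0$, and strictly decreasing to $-\infty$ on $(c_u,\infty)$. This gives unique zeros $b_u\in(a_u,c_u)$ and $d_u>c_u$, which proves item 2.

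\textbf{Item 1.} Note that $\psi_{t\star u}(r)=\psi_u(r+t)$. So $t\star u\in\mathcal M_\tau$ if and only if $\psi_u'(t)=0$. Moreover, $\psi_{v}''(0)$ equals $p\|\nabla v\|_p^p+ps^2[v]_{s,p}^p-\mu q\gamma_{p,q}^2\|v\|_q^q-2p^*_\alpha A(v)$, which is exactly the quantity defining $\mathcal M^\pm_\tau,\mathcal M^0_\tau$. Suppose some critical point of $\psi_u$ were degenerate. Then $f=f'=0$ there, so this zero of $f$ sits at the unique maximum of $f$. That forces $f\le0$, hence $\psi_u$ nonincreasing, contradicting the existence of two critical points. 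Thus $\psi_u''(a_u)>0$ and $\psi_u''(c_u)<0$, i.e. $a_u\star u\in\mathcal M_\tau^+$ and $c_u\star u\in\mathcal M_\tau^-$. Applying this to $u\in\mathcal M_\tau^0$ itself, where $0$ would be a degenerate critical point, shows $\mathcal M_\tau^0=\emptyset$.

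\textbf{Item 3.} For $t<b_u$ we have $h(T(t\star u))\le\psi_u(t)<0$, so $T(t\star u)\notin(R_0,R_1)$. Since $T(t\star u)\to0$ as $t\to-\infty$ and $t\mapsto T(t\star u)$ is continuous, it cannot jump over $(R_0,R_1)$. Hence $T(t\star u)\le R_0$ on $(-\infty,b_u)$. The set $\{t: T(t\star u)\le R_0\}$ is a half-line $(-\infty,t^*]$ with $a_u<b_u\le t^*$. Also $t^*<c_u$, since $\psi_u(c_u)>0$ forces $T(c_u\star u)\in(R_0,R_1)$. On $(-\infty,c_u)$ the minimum of $\psi_u$ is attained at $a_u$, with $\psi_u(a_u)<0$.

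\textbf{Item 4.} Apply the implicit function theorem to $F(t,u)=\psi_u'(t)=M(t\star u)$, which is $C^1$ on $\mathbb R\times W^{1,p}(\mathbb{R}^N)$. The required nondegeneracy $\partial_tF(a_u,u)=\psi_u''(a_u)\neq0$ (and likewise at $c_u$) was established above. The uniqueness of $a_u$ and $c_u$ identifies the local implicit branches with $\Phi_1$ and $\Phi_2$.

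The main obstacle is the exact count of critical points, since with three distinct positive exponents the identities available for $p=2$ are lost. The two-step factorization above, reducing to $g$ and then to $g'$ each having a single sign change, is what I would rely on. The same argument also delivers the nondegeneracy needed for $\mathcal M^0_\tau=\emptyset$ and for the $C^1$ regularity in item 4.
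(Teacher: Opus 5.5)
Your counting argument takes a different and sturdier route than the paper's. The paper factors out $e^{2p^*_\alpha t}$ and argues by contradiction, applying Rolle's theorem twice until it reaches a two-term function with negative derivative. It uses this only to bound the number of critical points. It then infers $\psi_u''(a_u)>0$ merely from $a_u$ being a strict local minimum, which does not rule out a degenerate minimum. You instead factor out the smallest exponential $e^{q\gamma_{p,q}t}$ and show that $f$ is unimodal. This gives $f'(a_u)>0>f'(c_u)$, hence $\psi_u''(a_u)>0>\psi_u''(c_u)$ and $\mathcal{M}_\tau^0=\emptyset$ rigorously. It also supplies exactly the nondegeneracy the implicit function theorem needs in item 4.

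There is, however, one invalid step, in item 3. You claim $t^*<c_u$ ``since $\psi_u(c_u)>0$ forces $T(c_u\star u)\in(R_0,R_1)$''. This runs the only available inequality $\psi_u(t)\ge h(T(t\star u))$ backwards. That inequality gives $\psi_u>0$ wherever $T(t\star u)\in(R_0,R_1)$. But positivity of $\psi_u(c_u)$ says nothing about $h(T(c_u\star u))$, and in general the bound $T(c_u\star u)<R_1$ fails, for instance when $A(u)$ is small relative to $\|\nabla u\|_p^{2p^*_\alpha}$. The paper has the same soft spot: it reads off its figure that $\psi_u$ is increasing on $(a_u,t_1]$.

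You do not actually need $t^*<c_u$. If $t^*\le c_u$, your argument already finishes. If $t^*>c_u$, then $\psi_u$ is decreasing on $[c_u,t^*]$, so there
$\psi_u\ge\psi_u(t^*)\ge h(T(t^*\star u))=h(R_0)=0>\psi_u(a_u)$.
In both cases the minimum of $\psi_u$ over $\{t: T(t\star u)\le R_0\}=(-\infty,t^*]$ is $\psi_u(a_u)<0$. Replace the faulty sentence with this two-case argument and item 3 is complete; the rest of your proposal goes through.
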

	\begin{proof}
		Since	
		$$\psi_{u}(t)= E(t\star u)= \frac{e^{pt}}{p}\left\| \nabla u \right\|_p^p+\frac{e^{pst}}{p}[u]_{s,p}^p-\frac{\mu e^{q\gamma_{p,q}t}}{q}\left\| u \right\|_q^q-\frac{e^{2p^*_{\alpha}t}}{2p^*_{\alpha}}A(u),$$
		we get $$\psi_{u}'(t)= e^{2p^*_{\alpha}t}\left(e^{(p-2p^*_{\alpha})t}\left\| \nabla u \right\|_p^p+se^{(ps-2p^*_{\alpha})t}[u]_{s,p}^p-\gamma_{p,q}\mu e^{(q\gamma_{p,q}-2p^*_{\alpha})t}\left\| u \right\|_q^q-A(u)\right).$$
		If $\psi_{u}$ has more than two critical points, then the function $g$, defined for $t\in \mathbb{R}$ as:
		$$g(t):=e^{(p-2p^*_{\alpha})t}\left\| \nabla u \right\|_p^p+se^{(ps-2p^*_{\alpha})t}[u]^2-\gamma_{p,q}\mu e^{(q\gamma_{p,q}-2p^*_{\alpha})t}\left\| u \right\|_q^q,$$
		attains $A(u)$ atleast thrice and hence has atleast two critical points. Now, since	$$g'(t)= e^{(q\gamma_{p,q}-2p^*_{\alpha})t}(\bar{g}(t)-C_{p,q})$$
		where $$ \bar{g}(t)= (p-2p^*_{\alpha})e^{(p-q\gamma_{p,q})t}\left\| \nabla u \right\|_p^p+s(ps-2p^*_{\alpha})e^{(ps-q\gamma_{p,q})t}[u]_{s,p}^p$$ and $$C_{p,q}=\mu\gamma_{p,q}(q\gamma_{p,q}-2p^*_{\alpha})\left\| u \right\|_q^q,$$ $\bar{g}$ must attain $C_{p,q}$ atleast twice and hence have atleast one critical point.  However since
		$$\bar{g}'(t)= (p-2.p^*_{\alpha})(p-q\gamma_{p,q})e^{(p-q\gamma_{p,q})t}\left\| \nabla u \right\|_p^p+s(ps-2p^*_{\alpha})(ps-q\gamma_{p,q})e^{(ps-q\gamma_{p,q})t}[u]_{s,p}^p<0,$$
		for all $ t\in \mathbb{R}$, we get a contradiction. Hence $\psi_{u}$ has atmost two crtical points. Further, since $t\mapsto T(t\star u)$ is continuous and increasing map from $\mathbb{R}$ onto $(0,+\infty)$, we can find $t_1,t_2\in \mathbb{R}$ such that $R_0=T(t_1\star u)<T(t\star u)<T(t_2\star u)=R_1$ for all $t\in (t_1,t_2)$, by \eqref{2.3} and \autoref{lemma 2.2}
		$$\psi_{u}(t)=E(t\star u)\geq h(T(t\star u))>0 \text{ for all } t\in (t_1,t_2).$$
		Also, one can see that $\psi_{u}(t)\rightarrow -\infty$ as $t\rightarrow +\infty$ and $\psi_{u}(t)\rightarrow 0^-$ as $t\rightarrow -\infty$, because $q\gamma_{p,q}<ps<p<2p^*_{\alpha}$. Thus, $\psi_{u}$ can have the following curvature:
		\begin{center}
			\begin{tikzpicture}\label{fig 1}
				\draw[thick,<->] (-4.5,0)--(3,0) node[anchor=west]{$t\rightarrow \infty$};
				\draw (-4,0)--(-4,0)node[anchor= south east]{$t\rightarrow -\infty$};
				\draw[thick,dotted] (-0.75,0.4)--(-0.75,0) node[anchor=north]{$t_1$};
				\draw [thick,dotted](0,0.5)--(0,0)node[anchor=north]{$c_u$};
				\draw[thick,dotted] (0.75,0.4)--(0.75,0) node[anchor=north]{$t_2$};
				\draw [thick,<->] (-4,-0.5)..controls (-2, -2) and (-1,2.5)..(2,-0.5) node[anchor=north west]{$\psi_u(t)$};
				\draw [thick,dotted](-3.1,-0.8)--(-3.1,0) node[anchor=south]{$a_u$};
				\draw (-1.6,0)--(-1.6,0) node[anchor=south]{$b_u$};
				\draw (1.6,0)--(1.6,0) node[anchor=south]{$d_u$};
			\end{tikzpicture}
		\end{center}
		Therefore, $\psi_u$ has exactly two critical points, corresponding respectively to a local minima $(a_u)$ at negative level and global maxima $(c_u)$ at positive level, and exactly two roots ($b_u$ and $d_u$).
		\begin{itemize}
			\item[Proof of 1.]  Since $a_u$ is a strict local minima of $\psi_{u}$, 
			$M(a_u\star u) = \psi_{u}'(a_u)=0$  and 
			\begin{eqnarray*}
				0  < \psi_{u}''(a_u)& = & pe^{pa_u}\left\| \nabla u \right\|_p^p+ps^2e^{psa_u}[u]_{s,p}^p-\mu q\gamma_{p,q}^2e^{q\gamma_{p,q}a_u}\left\| u \right\|_q^q-2p^*_{\alpha}e^{2p^*_{\alpha}}A(u)\\
				& = & p\left\| \nabla (a_u\star u )\right\|_p^p+ps^2[a_u \star u]_{s,p}^p-\mu q\gamma_{p,q}^2\left\| a_u\star u \right\|_q^q-2p^*_{\alpha}A(a_u\star u).
			\end{eqnarray*}		 	 	
			Thus $a_u \star u \in \mathcal{M}_{\tau}^+$. Similarly, since $c_u$ is global maxima of $\psi_u$, we will get $c_u\star u \in \mathcal{M}_{\tau}^-$. 
			Now, if $t\star u \in \mathcal{M}_{\tau}$, then clearly $t$ is a critical point of $\psi_u$, hence either $t=a_u$ or $t=c_u$. Moreover, since $\psi_{u}$ has exactly two critical points, both corresponding to its extremas, $\mathcal{M}_{\tau}^0$ must be an empty set.
			\item[Proof of 2.] It is evident by the curvature of $\psi_{u}$.
			\item[Proof of 3.] By monotonicity of the surjective map $t\mapsto T(t\star u)$ onto $(0,\infty)$, it is clear that $T(t\star u)\leq T(t_1\star u)=R_0$ for all $t<b_u\leq t_1$. Moreover, since $\psi_u$ is decreasing in $(-\infty, a_u)$ and increasing in $(a_u,t_1]$,
			\begin{equation*}
				0>E(a_u\star u)= \psi_u(a_u)  =  \min\{\psi_u(t): t\leq t_1\} = \min\{E(t\star u): T(t\star u)\leq T(t_1\star u)=R_0\}.
			\end{equation*}
			\item[Proof of 4.] By implicit function theorem, as done in the proof of Lemma 3.3 in \cite{Han2022Normalized}, clearly $\Phi_1$ and $\Phi_2$ are of class $C^1$.
		\end{itemize}
	\end{proof}	
	\begin{lemma}\label{Lemma 4}
		If $u\in \mathcal{M}_{\tau}$ is a critical point of $E|_{\mathcal{M}_{\tau}}$, then $u$ is a critical point of $E|_{S(\tau)}$.
	\end{lemma}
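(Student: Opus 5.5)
The plan is the standard Lagrange multiplier argument on the constraint set $\mathcal{M}_{\tau}=\{u\in W^{1,p}(\mathbb{R}^N): G(u):=\left\| u \right\|_p^p-\tau^p=0,\ M(u)=0\}$. First we check that $\mathcal{M}_{\tau}$ is a $C^1$ submanifold of codimension two, i.e. that $G'(u)$ and $M'(u)$ are linearly independent at every $u\in\mathcal{M}_{\tau}$. Suppose $M'(u)=\nu G'(u)$ for some $\nu\in\mathbb{R}$. Then $u$ is a weak solution of
\begin{equation*}
-p\Delta_p u+ps(-\Delta_p)^s u=\nu p|u|^{p-2}u+\mu q\gamma_{p,q}|u|^{q-2}u+2p^*_{\alpha}(I_{\alpha}*|u|^{p^*_{\alpha}})|u|^{p^*_{\alpha}-2}u .
\end{equation*}
Testing this with $u$ gives one identity. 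The Pohozaev identity for this equation (with coefficients adapted as in \eqref{Pohozaev_identity}, available via the regularity of Section 2) gives a second one. Eliminating $\nu$ exactly as in \autoref{Lemma 2.1} yields
\begin{equation*}
p\left\| \nabla u \right\|_p^p+ps^2[u]_{s,p}^p=\mu q\gamma_{p,q}^2\left\| u \right\|_q^q+2p^*_{\alpha}A(u),
\end{equation*}
that is, $u\in\mathcal{M}_{\tau}^0$. This contradicts \autoref{Lemma 2.3}(1), which says $\mathcal{M}_{\tau}^0=\emptyset$ for $\tau<\tau_1$.

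Second, since $u$ is a critical point of $E|_{\mathcal{M}_{\tau}}$, the multiplier rule gives $\lambda,\nu\in\mathbb{R}$ such that
\begin{equation*}
E'(u)-\lambda G'(u)-\nu M'(u)=0 \quad\text{in } (W^{1,p}(\mathbb{R}^N))^* .
\end{equation*}
It remains to show $\nu=0$; then $E'(u)=\lambda G'(u)$, so $u$ is a critical point of $E|_{S(\tau)}$. The combined equation is again of the form of \eqref{prob}, with coefficients $(1-\nu p)$ in front of $-\Delta_p u$, $(1-\nu ps)$ in front of $(-\Delta_p)^s u$, and correspondingly modified coefficients in front of the $q$-term and the Choquard term. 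Testing with $u$ and using the Pohozaev identity for this modified equation, then eliminating $\lambda$, gives
\begin{equation*}
M(u)-\nu\left(p\left\| \nabla u \right\|_p^p+ps^2[u]_{s,p}^p-\mu q\gamma_{p,q}^2\left\| u \right\|_q^q-2p^*_{\alpha}A(u)\right)=0 .
\end{equation*}
Since $M(u)=0$ and the bracket is nonzero because $u\notin\mathcal{M}_{\tau}^0$, we conclude $\nu=0$.

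A cleaner route to the same identity, which avoids regularity for the modified equation, is to use dilations directly. Since $t\mapsto t\star u$ preserves $S(\tau)$, differentiating at $t=0$ shows that the tangent direction $w=\frac{d}{dt}(t\star u)|_{t=0}$ satisfies $G'(u)w=0$. In addition, $E'(u)w=\psi_u'(0)=M(u)=0$, and $M'(u)w=\psi_u''(0)$. Evaluating the multiplier relation on $w$ then gives $\nu\,\psi_u''(0)=0$, and $\psi_u''(0)\neq 0$ because $\mathcal{M}_{\tau}^0=\emptyset$. Hence $\nu=0$.

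The main obstacle is justifying this pairing: $w=\frac{N}{p}u+x\cdot\nabla u$ need not lie in $W^{1,p}(\mathbb{R}^N)$. I would handle this by working instead with the curve $t\mapsto t\star u$ itself, using that $E$, $G$ and $M$ are differentiable along it in the classical sense. Alternatively, one can approximate $u$ by $C_c^\infty$ functions, for which $w$ is admissible, and then pass to the limit. The same approximation also settles the nondegeneracy in the first step by this route, again reducing everything to $\psi_u''(0)\ne0$.
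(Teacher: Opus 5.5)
Your main argument (the Lagrange multiplier rule, testing the modified equation with $u$, applying the Poho\v{z}aev identity to it, and eliminating the $L^p$-multiplier to get $\nu\,\psi_u''(0)=0$) is exactly the paper's proof, and your first paragraph correctly adds the linear independence of $G'(u)$ and $M'(u)$, which the paper uses without checking. The dilation shortcut, however, does not avoid regularity and should not be offered as an independent proof. From $F'(u)=0$ with $F=E-\lambda G-\nu M$ you only get $F(t\star u)-F(u)=o(\|t\star u-u\|_{W^{1,p}})$, which is $o(t)$ only if $\|t\star u-u\|_{W^{1,p}}=O(|t|)$, essentially only if $w\in W^{1,p}(\mathbb{R}^N)$. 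In the $C_c^\infty$ approximation, $F'(u_k)w_k$ need not tend to $0$, since $\|w_k\|_{W^{1,p}}$ may blow up. So that route is the Poho\v{z}aev identity in disguise.
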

	\begin{proof}
		For a critical point $u $ of $E|_{\mathcal{M}_{\tau}}$, by the Lagrange multiplier's rule, there exists $\lambda_1$ and $\lambda_2\in \mathbb{R}$ such that:
		$$E'(u)(v)-\lambda_1 \int_{\mathbb{R}^N}|u|^{p-2}uv-\lambda_2 M'(u)(v)=0 \text{ for all } v\in W^{1,p}(\mathbb{R}^N),$$
		that is,
		\begin{eqnarray*}
			&&(1-p\lambda_2)\int_{\mathbb{R}^N}|\nabla u|^{p-2}\nabla u \nabla v +(1-p\lambda_2 s)\ll u,v \gg_{s,p}\\
			&& =  \mu(1-\lambda_2q\gamma_{p,q})\int_{\mathbb{R}^N}|u|^{q-2}uv+\lambda_1 \int_{\mathbb{R}^N}|u|^{p-2}uv +(1-\lambda_2 2p^*_{\alpha})\int_{\mathbb{R}^N}(I_{\alpha}*|u|^{p^*_{\alpha}})|u|^{p^*_{\alpha}-2}uv,
		\end{eqnarray*}
		for all $v\in W^{1,p}(\mathbb{R}^N)$, and hence, $u$ solves:
		\begin{eqnarray}\label{4.19}
			&& 	-(1-p\lambda_2)\Delta_p u +(1-p\lambda_2 s)(-\Delta_p)^su\nonumber\\
			&&  =\lambda_1 |u|^{p-2}u +\mu(1-\lambda_2 q\gamma_{p,q} )|u|^{q-2}u+(1-\lambda_22p^*_{\alpha})(I_{\alpha}*|u|^{p^*_{\alpha}})|u|^{p^*_{\alpha}-2}u, \text{ in } \mathbb{R}^N.
		\end{eqnarray}
		Claim: $\lambda_2=0$. \\
		Now, as done in the proof of \autoref{Lemma 2.1}, by \eqref{4.19} we have:
		\begin{equation*}
			(1-p\lambda_2)\left\| \nabla u \right\|_p^p+(1-ps\lambda_2)[u]_{s,p}^p = \lambda_1 \left\| u \right\|_p^p+\mu(1-\lambda_2 q\gamma_{p,q})\left\| u \right\|_q^q+(1-\lambda_2 2p^*_{\alpha})A(u)
		\end{equation*} 
		and
		\begin{eqnarray*}
			\lambda_1 \left\| u \right\|_p^p & = & \frac{p}{N}\left((1-p\lambda_2)\left(\frac{N-p}{p}\right)\left\| \nabla u \right\|_p^p+(1-ps\lambda_2)\left(\frac{N-ps}{p}\right)[u]_{s,p}^p\right.\\
			&&-\mu(1-\lambda_2 q\gamma_{p,q})\frac{N}{q}\left\| u \right\|_q^q
			\left. - (1-\lambda_2 2p^*_{\alpha})\left(\frac{N+\alpha}{2p^*_{\alpha}}\right)A(u)\right).
		\end{eqnarray*}
		Thus $$\lambda_2\left(p \left\| \nabla u \right\|_p^p+ps^2[u]_{s,p}^p-\mu q\gamma_{p,q}^2\left\| u \right\|_q^q-2p^*_{\alpha}A(u)\right)=0.$$
		Since $\mathcal{M}_{\tau}^0$ is an empty set, we must have $\lambda_2=0$. Therefore, $u$ is a critical point of $E|_{S(\tau)}$.
	\end{proof}
	\noindent For any $k>0$, denoting $A_k=\{u\in S(\tau): T(u)<k\}$, we define 
	$$m_{\tau}:=\displaystyle \inf_{u\in A_{R_0}}E(u),$$ 
	where $R_0$ is as deduced in \autoref{lemma 2.2}.
	Then, we have the following results for $m_{\tau}$, $m_{\tau}^-$ and $m_{\tau}^+$:
	\begin{lemma}\label{Lemma 2.4}
		$m_{\tau}^->0.$
	\end{lemma}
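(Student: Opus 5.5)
The aim is to show that every $u\in\mathcal{M}_{\tau}^-$ satisfies $E(u)\geq \max_{t}h(t)>0$, or at least a uniform positive bound. The natural route goes through the fiber map analysis of \autoref{Lemma 2.3}.

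Fix $u\in\mathcal{M}_{\tau}^-$. Since $\psi_u'(0)=M(u)=0$, the point $0$ is a critical point of $\psi_u$. By \autoref{Lemma 2.3}(1) this forces $0\in\{a_u,c_u\}$. The sign condition defining $\mathcal{M}_{\tau}^-$ says exactly that $\psi_u''(0)<0$, whereas $\psi_u''(a_u)>0$, so $0=c_u$. By \autoref{Lemma 2.3}(2) it follows that
$$E(u)=\psi_u(0)=\max_{t\in\mathbb{R}}E(t\star u).$$

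Next I compare with $h$. The map $t\mapsto T(t\star u)$ is continuous, increasing and onto $(0,\infty)$, so there is $t^*\in\mathbb{R}$ with $T(t^*\star u)=t_{\max}$, where $t_{\max}\in(R_0,R_1)$ is the global maximum point of $h$ from \autoref{lemma 2.2}. The dilation $t^*\star u$ stays in $S(\tau)$, so \eqref{2.3} applies and gives
$$E(u)\geq E(t^*\star u)\geq h(T(t^*\star u))=\max_{t>0}h(t)>0.$$
Taking the infimum over $u\in\mathcal{M}_{\tau}^-$ yields $m_{\tau}^-\geq \max h>0$. This lower bound depends only on $\tau,\mu,N,p,q,\alpha$, since $\tau<\tau_1$ makes the maximum of $h$ positive.

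The only delicate point is justifying that $\psi_u''(0)<0$ identifies $0$ with $c_u$ rather than $a_u$. This uses the computation of $\psi_u''$ in the proof of \autoref{Lemma 2.3}(1), where $\psi_u''(t)$ equals $p\|\nabla(t\star u)\|_p^p+ps^2[t\star u]_{s,p}^p-\mu q\gamma_{p,q}^2\|t\star u\|_q^q-2p^*_\alpha A(t\star u)$. Evaluated at $t=0$, this is exactly the quantity whose sign defines $\mathcal{M}_\tau^\pm$. It also uses the fact that $\psi_u$ has exactly two critical points. Everything else follows directly from the earlier lemmas.
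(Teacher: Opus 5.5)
Your proposal is correct and follows the paper's proof essentially step by step. You identify $0=c_u$, so that $E(u)=\max_{t\in\mathbb{R}}E(t\star u)$; you then rescale so that $T(t^*\star u)$ equals the maximizer of $h$, and use \eqref{2.3} to get $m_\tau^-\geq \max h>0$. Your write-up is also slightly more careful than the paper's in two places: you justify via the sign of $\psi_u''(0)$ why $0=c_u$ rather than $a_u$, and you use the actual maximizer of $h$, whereas the paper's $t_0$ in the proof of \autoref{lemma 2.2} is really the critical point of $\bar h$.
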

	\begin{proof}
		For any $u\in \mathcal{M}_{\tau}^-$ we have, $0\star u= u\in \mathcal{M}_{\tau}^-$, then by \autoref{Lemma 2.3},  $0$ is the global maxima of $\psi_u$ at a positive level and	$E(u)=\psi_{u}(0)=\max\{E(t\star u): t\in \mathbb{R}\}>0$. Hence, $m_{\tau}^-\geq 0$. Moreover, for every $u\in \mathcal{M}_{\tau}^-$, we can find some $t_u\in \mathbb{R}$ such that $T(t_u\star u)=t_0$, where $t_0$ is the global maxima of $h$ deduced in \autoref{lemma 2.2}. Thus,
		$$E(u)=\max\{E(t\star u): t\in \mathbb{R}\}\geq E(t_u\star u)\geq h(T(t_u\star u))=h(t_0)>0 \text{ for all } u\in \mathcal{M}_{\tau}^-$$
		from which it follows that $m_{\tau}^-\geq h(t_0)>0$.
	\end{proof}
	\begin{lemma}\label{Lemma 2.5}
		$\displaystyle \sup_{u\in \mathcal{M}_{\tau}^+}E(u)\leq 0 < m_{\tau}^-$ and $\mathcal{M}_{\tau}^+\subset A_{R_0}$.
	\end{lemma}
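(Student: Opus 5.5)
The plan is to reduce everything to the fiber-map description in \autoref{Lemma 2.3}. Take any $u\in\mathcal{M}_{\tau}^+$. Then $0\star u=u\in\mathcal{M}_{\tau}$, so $t=0$ is a critical point of $\psi_u$. By item 1 of \autoref{Lemma 2.3}, $0\in\{a_u,c_u\}$. Since $c_u\star u\in\mathcal{M}_{\tau}^-$ and $\mathcal{M}_{\tau}^+\cap\mathcal{M}_{\tau}^-=\emptyset$ (the defining inequalities are strict and opposite), we must have $0=a_u$. Hence $E(u)=\psi_u(a_u)<0$ by item 3 of \autoref{Lemma 2.3}. Taking the supremum over $\mathcal{M}_{\tau}^+$ gives $\sup_{\mathcal{M}_{\tau}^+}E\le 0$. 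Combining this with \autoref{Lemma 2.4}, where $m_{\tau}^-\ge h(t_0)>0$, yields the first chain of inequalities.

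For the inclusion $\mathcal{M}_{\tau}^+\subset A_{R_0}$, I again use $a_u=0$. In the proof of \autoref{Lemma 2.3} we have $a_u<b_u\le t_1$, where $T(t_1\star u)=R_0$. The map $t\mapsto T(t\star u)$ is strictly increasing, since $T(t\star u)^p=e^{pt}\|\nabla u\|_p^p+e^{pst}[u]_{s,p}^p$. It follows that $T(u)=T(0\star u)<T(t_1\star u)=R_0$, that is, $u\in A_{R_0}$.

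If one prefers not to rely on the ordering $a_u<t_1$ read off from the picture, there is an alternative argument. Since $E(u)<0$ and $E(u)\ge h(T(u))$ by \eqref{2.3}, \autoref{lemma 2.2} gives $T(u)\in(0,R_0)\cup(R_1,\infty)$ (the case $T(u)=R_0$ is excluded because then $h=0$). To rule out $T(u)\ge R_1$, observe that $\psi_u<0$ and $\psi_u$ is strictly decreasing on $(c_u,\infty)$, while $\psi_u>0$ on $(t_1,t_2)$ with $T(t_2\star u)=R_1$. Hence any $t$ with $T(t\star u)\ge R_1$ satisfies $t\ge t_2>c_u$, so $t$ cannot be a local minimum point. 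Therefore $0=a_u$ forces $T(u)<R_0$.

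The main obstacle I expect is making rigorous the ordering $a_u<t_1<c_u<t_2$, which the earlier lemma mostly justifies through its sketch. To settle it, I would argue as follows. The function $\psi_u$ tends to $0^-$ as $t\to-\infty$ and is positive on $(t_1,t_2)$. It has exactly two critical points, so the minimum must lie before $t_1$ (where $\psi_u<0$ somewhere, because $h<0$ near $0^+$ forces negativity) and the maximum must lie in a region where $\psi_u>0$. Given only two critical points, a negative local minimum cannot occur after a positive maximum while $\psi_u\to-\infty$ monotonically there. This gives the required ordering.
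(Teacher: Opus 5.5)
Your argument is correct and essentially reproduces the paper's proof: you get $a_u=0$ for $u\in\mathcal{M}_\tau^+$, read off $E(u)=\psi_u(a_u)<0$ from item 3 of \autoref{Lemma 2.3}, invoke \autoref{Lemma 2.4}, and use $0=a_u<t_1$ with the strict monotonicity of $t\mapsto T(t\star u)$ to get $T(u)<R_0$. Your use of $\mathcal{M}_\tau^+\cap\mathcal{M}_\tau^-=\emptyset$ to rule out $0=c_u$ just fills in the paper's ``clearly''. The only blemish is in your optional alternative paragraph: the claim $t_2>c_u$ does not follow from the lower bound $h$, and $h<0$ is not what forces $\psi_u<0$. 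The ordering you actually need, $a_u\le t_1$, follows because $\psi_u$ is strictly decreasing on $(-\infty,a_u]$ with limit $0^-$ at $-\infty$. Hence $\psi_u<0$ on $(-\infty,a_u]$, which is disjoint from $(t_1,t_2)$ where $\psi_u>0$.
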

	\begin{proof}
		Clearly, for any $u\in \mathcal{M}_{\tau}^+$, $a_u=0$. Thus by \autoref{Lemma 2.3} $E(u)<0$ and hence by \autoref{Lemma 2.4} $\displaystyle \sup_{u\in \mathcal{M}_{\tau}^+}E(u)\leq 0 <m_{\tau}^-$. Furthermore, $T(u)=T(a_u\star u)<T(t_1 \star u)=R_0$, for all $u\in \mathcal{M}_{\tau}^+$, since $0=a_u<t_1$. Hence $\mathcal{M}_{\tau}^+\subset A_{R_0}$.
	\end{proof}
	\begin{lemma}\label{Lemma 2.6}
		$\displaystyle -\infty <m_{\tau}=\inf_{u\in \mathcal{M}_{\tau}}E(u)=m_{\tau}^+<0,$ and
		for $\delta>0$ small enough
		\begin{equation}\label{2.4}
			m_{\tau}<\displaystyle \inf_{\bar{A}_{R_0}\setminus A_{R_0-\delta}}E(u).
		\end{equation}
	\end{lemma}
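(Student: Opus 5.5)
The proof splits into four pieces: finiteness of $m_\tau$, its negativity, the identification with $\inf_{\mathcal{M}_\tau}E$ and $m_\tau^+$, and the strict gap \eqref{2.4}.

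\emph{Finiteness.} For $u\in A_{R_0}$ we have $E(u)\ge h(T(u))$ by \eqref{2.3}. Since $h$ is continuous on $[0,R_0]$, this gives $m_\tau\ge \min_{[0,R_0]}h>-\infty$.

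\emph{Negativity.} Take any $u\in S(\tau)$. As $t\to-\infty$ we have $T(t\star u)\to 0$, so $t\star u\in A_{R_0}$. Moreover $\psi_u(t)<0$ there, because the term $e^{q\gamma_{p,q}t}$ dominates (recall $q\gamma_{p,q}<ps<p$). Hence $m_\tau<0$.

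\emph{Identification with $m_\tau^+$.}
\begin{itemize}
\item By \autoref{Lemma 2.5}, $\mathcal{M}_\tau^+\subset A_{R_0}$, so $m_\tau\le m_\tau^+$.
\item Conversely, let $u\in A_{R_0}$. Since $T(u)<R_0$ and $t\mapsto T(t\star u)$ is increasing, $0<t_1$, where $T(t_1\star u)=R_0$.
\item \autoref{Lemma 2.3}(3), whose proof gives the minimum of $\psi_u$ over $(-\infty,t_1]$, then yields $E(u)=\psi_u(0)\ge\psi_u(a_u)=E(a_u\star u)\ge m_\tau^+$, because $a_u\star u\in\mathcal{M}_\tau^+$.
\item Thus $m_\tau=m_\tau^+$.
\item For the infimum over all of $\mathcal{M}_\tau$: we have $\mathcal{M}_\tau=\mathcal{M}_\tau^+\cup\mathcal{M}_\tau^-$ (since $\mathcal{M}_\tau^0=\emptyset$), and $E>0$ on $\mathcal{M}_\tau^-$ by \autoref{Lemma 2.4}. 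Hence $\inf_{\mathcal{M}_\tau}E=m_\tau^+$.
\end{itemize}

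\emph{Strict gap \eqref{2.4}.} Since $h$ is continuous with $h(R_0)=0$, choose $\delta>0$ small so that $h(t)\ge m_\tau/2$ for all $t\in[R_0-\delta,R_0]$. This is possible because $m_\tau/2<0=h(R_0)$. Then for $u\in\bar A_{R_0}\setminus A_{R_0-\delta}$ we have $T(u)\in[R_0-\delta,R_0]$, so $E(u)\ge h(T(u))\ge m_\tau/2>m_\tau$. Taking the infimum gives $\inf_{\bar A_{R_0}\setminus A_{R_0-\delta}}E\ge m_\tau/2>m_\tau$.

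The only delicate point is the inequality $\psi_u(0)\ge\psi_u(a_u)$ for $u\in A_{R_0}$. One must make sure that $0$ lies in the region $t\le t_1$ where $a_u$ is the minimizer; this is exactly where the monotonicity of $t\mapsto T(t\star u)$ and \autoref{Lemma 2.3}(3) are used.
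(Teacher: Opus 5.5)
Your proof is correct and follows the paper's argument step for step: the lower bound via $h$ on $[0,R_0]$, the comparison $E(u)=\psi_u(0)\ge\psi_u(a_u)\ge m_\tau^+$ for $u\in A_{R_0}$ together with $\mathcal{M}_\tau^+\subset A_{R_0}$, the sign separation $m_\tau^+<0<m_\tau^-$ to handle all of $\mathcal{M}_\tau$, and the choice of $\delta$ with $h\ge m_\tau/2$ on $[R_0-\delta,R_0]$. The only cosmetic difference is how you get $m_\tau<0$: you read it off directly from $\psi_u(t)<0$ as $t\to-\infty$, whereas the paper uses $\psi_u(a_u)<0$ with $a_u\star u\in\mathcal{M}_\tau^+\subset A_{R_0}$; this is the same fact in disguise.
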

	\begin{proof}
		For any $u\in A_{R_0}$, we have:
		$$E(u)\geq h(T(u))\geq \min_{t\in [0,R_0]}h(t)>-\infty,$$
		and hence $m_{\tau}>-\infty$. Also, since $a_u\star u \in \mathcal{M}_{\tau}^+\subset A_{R_0}$, $$-\infty< m_{\tau}=\displaystyle \inf_{u\in A_{R_0}}E(u)\leq E(a_u\star u)=\psi_u(a_u)<0.$$ 
		Further, if $u\in A_{R_0}$, then by \autoref{Lemma 2.3} $E(u)=E(0\star u)\geq E(a_u\star u)\geq m_{\tau}^+$. Hence $m_{\tau}\geq m_{\tau}^+$. Also since $\mathcal{M}_{\tau}^+\subset A_{R_0}$ we get $m_{\tau}=m_{\tau}^+$. Now, since $\mathcal{M}_{\tau}=\mathcal{M}_{\tau}^+\cup \mathcal{M}_{\tau}^+\cup\mathcal{M}_{\tau}^0$, $\mathcal{M}_{\tau}^0$ is an empty set and 
		$$m_{\tau}^+=\inf_{u\in \mathcal{M}_{\tau}^+}E(u)\leq \sup_{u\in \mathcal{M}_{\tau}^+}E(u)\leq 0<\inf_{u\in \mathcal{M}_{\tau}^-}E(u),$$
		by \autoref{Lemma 2.5}, then clearly $\displaystyle \inf_{u\in \mathcal{M}_{\tau}}E(u)=\inf_{u\in \mathcal{M}_{\tau}^+}E(u)=m_{\tau}^+$. Therefore,
		$$-\infty<m_{\tau}=\inf_{u\in \mathcal{M}_{\tau}}E(u)=m_{\tau}^+<0.$$
		Now, since $h$ is continuous, $h(R_{0})=0$, $h(t)<0$ for all $t\in (0,R_0)$ and $m_{\tau}<0$, we can find $\delta>0$ small enough so that $h(t)\geq \frac{m_{\tau}}{2}$ for all $t\in [R_0-\delta,R_0]$. Hence, for all $u\in \bar{A}_{R_0}\setminus A_{R_0-\delta}$,
		$$R_0-\delta<T(u)\leq R_0 \Rightarrow E(u)\geq h(T(u))\geq \frac{m_{\tau}}{2}>m_{\tau}.$$
		Thus, we get \eqref{2.4}.
	\end{proof}
	\section{First solution}
	In this section, using the above prerequisite results, symmetric decreasing rearrangement tool, and Ekeland variational principle, we will show the existence of a radially symmetric function $u_{\tau}^+\in \mathcal{M}_{\tau}^+$ and $\lambda_{\tau}^+<0$, such that $(u_{\tau}^+,\lambda_{\tau}^+)$ solves \eqref{prob}. The subsequent rearrangement inequalities will be beneficial for this purpose.
	\begin{remark}\label{Remark 3.1}
		For any $f\in W^{1,p}(\mathbb{R}^N)$, let $f^*$ (given by Schwarz symmetrization, see \cite{Kawohl} for further details) be its symmetric decreasing rearrangement, then we have the following:
		\begin{enumerate}
			\item $\left\| \nabla f \right\|_p \geq \left\| \nabla f^*\right\|_p$ and $\left\| f \right\|_p=\left\| f^* \right\|_p$ for all $1\leq p\leq \infty$,
			\item $A(f)\leq A(f^*)$,
			\item  $[f^*]_{s,p}^p \leq [f]_{s,p}^p$.
		\end{enumerate}
	\end{remark}		
	\begin{proof}
		We refer \cite{Baernstein2019Symmetrization, Burchard2009Short, Lieb2001Analysis} and \cite[Remark~2.1]{Nidhi2025Normalized_Asymp} for the proof of 1. and 2. Now,
		\begin{eqnarray*}
			[f^*]_{s,p}^p & = & \int_{\mathbb{R}^N} \int_{\mathbb{R}^N}\frac{|f^*(x)-f^*(y)|^p}{|x-y|^{N+sp}}dxdy = \int_{\mathbb{R}^N}\frac{1}{|z|^{N+sp}}\left(\int_{\mathbb{R}^N}|f^*(y+z)-f^*(y)|^pdy\right)dz\\
			& = & \int_{\mathbb{R}^N}\frac{\left\| \bar{f}^*-f^*\right\|_p^p}{|z|^{N+sp}}dz,
		\end{eqnarray*}
		where $\bar{f}(x):=f(x+z)$. Then by \cite[Ex~1.7]{Burchard2009Short}, we get
		\begin{equation*}
			[f^*]_{s,p}^p  =  \int_{\mathbb{R}^N}\frac{\left\| \bar{f}^*-f^*\right\|_p^p}{|x-y|^{N+sp}}dz \leq \int_{\mathbb{R}^N}\frac{\left\| \bar{f}-f\right\|_p^p}{|x-y|^{N+sp}} dz = [f]_{s,p}^p.
		\end{equation*}
	\end{proof}
	\begin{myproof}{Theorem}{\ref{Theorem 1}}
		Let $\{w_n\}_{n\in \mathbb{N}}\subset A_{R_0}$ be the minimizing sequence for $E$ on $A_{R_0}$. Then taking $w_n^*$ to be the symmetric decreasing rearrangement of $w_n$, by the rearrangement inequalities and \autoref{Remark 3.1}, it can be seen that $\{w_n^*\}_{n\in \mathbb{N}}\subset A_{R_0}$ and $E(w_n^*)\leq E(w_n)$ for each $n\in \mathbb{N}$. Thus, $\{w_n^*\}_{n\in \mathbb{N}}$ is a minimizing sequence as well. Now, for each $n\in \mathbb{N}$, by \autoref{Lemma 2.3} there exists $a_{n}\in \mathbb{R}$ such that $a_n\star w_n^*\in \mathcal{M}_{\tau}^+$ and $E(w_n^*)=E(0\star w_n^*)\geq E(a_n\star w_n^*)$.
		Taking $v_n=a_n\star w_n^*$ to be the minimizing sequence for $E$ on $\mathcal{M}_{\tau}^+$ and hence, that of $E$ on $A_{R_0}$, clearly, $v_n$ is radially decreasing, $v_n\geq 0$ and $T(v_n)<R_0-\delta$ for all $n\in \mathbb{N}$.
		Applying Ekeland variational principle 
		(see \cite[Theorem~1.1 and Corollary~1.3]{Ghoussoub}
		) we can find a sequence of radially decreasing non-negative functions, $\{u_n\}$
		such that\begin{equation}\label{3.1}
			\left\{
			\begin{array}{cc}
				E(u_n)\rightarrow m_{\tau} & \text{ as } n\rightarrow \infty,\\
				E(u_n)\leq E(v_n) & \text{ for all } n\in \mathbb{N},\\
				M(u_n) \rightarrow 0 & \text{ as } n\rightarrow \infty,\\
				E'_{S(\tau)}(u_n) \rightarrow 0 & \text{ as } n\rightarrow \infty.\\
			\end{array}
			\right.
		\end{equation}
		Here, $E'_{S(\tau)}(u_n) \rightarrow 0$ means that the sequence  $y_n=\sup\left\{\frac{E'(u_n)(w)}{\left\| w \right\|}: w\in S({\tau})\right\}$ converges to $0$. Now, by \eqref{3.1} and applying Lagrange multipliers rule, we can find a sequence $\{\lambda_n\}_{n\in \mathbb{N}}$ such that:
		\begin{equation}\label{3.2}
			E'(u_n)-\lambda_n\Phi'(u_n) \rightarrow 0, \text{ where } \Phi(u)=\frac{1}{p}\left\| u \right\|_p^p.
		\end{equation}
		Clearly, since $\{u_n\}_{n\in \mathbb{N}}\subset A_{R_0}$, it is bounded in $W^{1,p}(\mathbb{R}^N)$ and hence, weakly convergent upto a subsequence in $W^{1,p}(\mathbb{R}^N)$. Denoting the subsequence by $\{u_n\}_{n\in\mathbb{N}}$ itself, let $u_0\in W^{1,p}(\mathbb{R}^N)$ be such that $u_n\rightharpoonup u_0$ as $n\rightarrow \infty$. Clearly, $0\leq u_0\in W^{1,p}_r(\mathbb{R}^N)$ and it is radially decreasing.\\
		Claim 1: $\lambda_n \rightarrow \lambda<0$ as $n\rightarrow \infty$, up to a subsequence.\\
		Clearly,
		\begin{equation}\label{new_eq}
			o_n(1)= \left\| \nabla u_n \right\|_p^p+[u_n]_{s,p}^p-\mu \left\| u_n \right\|_q^q-A(u_n)-\lambda_n\tau^p,
		\end{equation}
		by weak convergence of $\{u_n\}$ and \eqref{3.2}.
		Then, by Fatou's lemma and the compact imbedding of $W^{1,p}_r(\mathbb{R}^N)$ in $L^q(\mathbb{R}^N)$ 
		(see \cite[Theorem~II.1]{PLLions}) 
		we have:
		$$\lambda_n \leq\frac{T(u_n)^p}{\tau^p}-\frac{\mu \left\| u_0\right\|_q^q}{\tau^p}-\frac{A(u_0)}{\tau^p}+o(1).$$
		Hence by boundedness of $\{u_n\}_{n\in \mathbb{N}}$ in $W^{1,p}(\mathbb{R}^N)$, 
		$$|\tau^p\lambda_n|\leq |T(u_n)^p|+\mu \left\| u_0 \right\|_q^q+|A(u_0)|+o(1)<+\infty.$$
		Thus $\{\lambda_n\}_{n\in \mathbb{N}}$ is bounded and hence convergent upto a subsequence. Denoting the subsequence by $\{\lambda_n\}_{n\in \mathbb{N}}$ itself, let $\lambda_0\in \mathbb{R}$ be such that $\lambda_n\rightarrow \lambda_0$ as $n\rightarrow \infty$. Now, since $u_n\in \mathcal{M}_{\tau}$, by \eqref{new_eq} and the fact that $\gamma_{p,q}<1$ we get:
		\begin{eqnarray*}
			\lambda_0\tau^p & = & \lim_{n\rightarrow \infty}\left( \left\| \nabla u_n \right\|_p^p+[u_n]_{s,p}^p-\mu\left\| u_n\right\|_q^q-A(u_n)\right)\\
			& = & \lim_{n\rightarrow \infty} \left((1-s)[u_n]_{s,p}^p+\mu\left(\gamma_{p,q}-1\right)\left\| u_n \right\|_q^q\right)< 0,
		\end{eqnarray*}
		for sufficiently large $\mu>0$.\\
		Claim 2: $u_0\neq 0$.\\
		Suppose $u_0=0$, then by the compact embedding $W^{1,p}_r(\mathbb{R}^N)\hookrightarrow L^t(\mathbb{R}^N)$ for all $t\in (p,p^*)$ and \eqref{3.1}, we get $\displaystyle \lim_{n\rightarrow \infty}\left(T_s(u_n)^p-A(u_n)\right)=0$, where $T_s(u):=(\left\| \nabla u \right\|_p^p+s[u]_{s,p}^p)^{\frac{1}{p}}$.
		Now, since $\{u_n\}_{n\in \mathbb{N}}$ is a bounded sequence in $W^{1,p}(\mathbb{R}^N)$, the sequence $\{T_s(u_n)\}_{n\in \mathbb{N}}$ turns out to be a bounded sequence in $\mathbb{R}$ and hence has a convergent subsequence. Denoting the subsequence by $\{T_s(u_n)\}_{n\in \mathbb{N}}$ itself, we get $\displaystyle \lim_{n\rightarrow \infty}A(u_n)=\lim_{n\rightarrow \infty}T_s(u_n)^p$.\\
		Suppose $T_s(u_n)^p\rightarrow l$ as $n\rightarrow\infty$, then by \eqref{S_alpha,p}
		$$l^{\frac{p}{2p^*_{\alpha}}}\mathbb{S}\leq l \Rightarrow l( l^{\frac{p-2p^*_{\alpha}}{2p^*_{\alpha}}}\mathbb{S}-1)\leq 0.$$
		Since $m_{\tau}<0$, $l=0$ will lead us to a contradiction. Indeed if $l=0$, then
		\begin{equation*}
			m_{\tau}=\lim_{n\rightarrow \infty} E(u_n)\geq \lim_{n\rightarrow \infty}\left(\frac{T_s(u_n)^p}{p}-\frac{\mu \left\| u_n \right\|_q^q}{q}-\frac{A(u_n)}{2p^*_{\alpha}}\right)=0.
		\end{equation*}
		Hence we must have $l\geq \mathbb{S}^{\frac{2p^*_{\alpha}}{2p^*_{\alpha}-p}}$. Now,
		\begin{eqnarray*}
			m_{\tau} & = & \lim_{n\rightarrow \infty}E(u_n) =\lim_{n\rightarrow \infty}\left(E(u_n)-\frac{M(u_n)}{2p^*_{\alpha}}\right)\\
			& = & \lim_{n\rightarrow \infty} \left(\left(\frac{2p^*_{\alpha}-p}{2pp^*_{\alpha}}\right)\left\| \nabla u_n \right\|_p^p+\left(\frac{2p^*_{\alpha}-sp}{2pp^*_{\alpha}}\right)[u_n]^2-\mu\left(\frac{1}{q}-\frac{\gamma_{p,q}}{2p^*_{\alpha}}\right)\left\| u_n \right\|_q^q\right)\\
			& \geq & \left(\frac{2p^*_{\alpha}-p}{2pp^*_{\alpha}}\right)\lim_{n\rightarrow \infty}T_s(u_n)^p= \left(\frac{2p^*_{\alpha}-p}{2pp^*_{\alpha}}\right)l\geq \left(\frac{2p^*_{\alpha}-p}{2pp^*_{\alpha}}\right)\mathbb{S}^{\frac{2p^*_{\alpha}}{2p^*_{\alpha}-p}}\geq 0.
		\end{eqnarray*}
		Thus, we are again lead to a contradiction. Therefore $u_0\neq 0$.\\
		Claim 3: $(u_0, \lambda_0)$ solves \eqref{prob}.\\
		Since $\lambda_0<0$, we can define the following equivalent norm on $W^{1,p}(\mathbb{R}^N)$:
		
		$$\left\| u \right\|_{\lambda_0}:=(\left\|\nabla u \right\|_p^p+[u]_{s,p}^p-\lambda_0\left\| u \right\|_p^p)^{\frac{1}{p}}.$$
		Then for any $v\in W^{1,p}(\mathbb{R}^N)$, by \eqref{3.2} we have:
		\begin{eqnarray}\label{3.4}
			0 & = & \lim_{n\rightarrow \infty}\left(E'(u_n)(v)-\lambda_n\Phi'(u_n)(v)\right)\nonumber\\
			& = & \int_{\mathbb R^N}|\nabla u_0|^{p-2}\nabla u_0\nabla v +\ll u_0, v \gg_{s,p}-\mu\int_{\mathbb{R}^N}|u_0|^{q-2}u_0 v\nonumber\\
			&& -A'(u_0)(v)- \lambda_0\int_{\mathbb{R}^N}|u_0|^{p-2}u_0v,
		\end{eqnarray}
		since the mappings, $u\mapsto \frac{\left\|u \right\|_q^q}{q}$ and $A$ defined on $W^{1,p}(\mathbb{R}^N)$ are of class $C^1$. Thus, $u_0$ solves:
		$$-\Delta_p u_0 +(-\Delta_p)^su_0 = \lambda_0|u_0|^{p-2}u_0+\mu|u_0|^{q-2}u_0+(I_{\alpha}*|u_0|^{p^*_{\alpha}})|u|^{p^*_{\alpha}-2}u_0 \text{ in } \mathbb{R}^N.$$
		Next, we will show that $\left\| u_0 \right\|_p=\tau$.
		Following the proof of \autoref{Lemma 2.1}, we have $M(u_0)=0$. Now, define $\bar{u}_n:=u_n-u_0$.
		Since $\bar{u}_n\rightharpoonup 0$ in $W^{1,p}(\mathbb{R}^N)$ and hence in $W^{1,p}_r(\mathbb{R}^N)$, then by Brezis Lieb lemma, lemma 2.4 of \cite{Moroz2013groundstates} and compact imbedding of $W^{1,p}_r(\mathbb{R}^N)$ in $L^q(\mathbb{R}^N)$, we get
		\begin{equation}\label{3.5}
			\left\{  \begin{array}{rl}
				\left\| \nabla \bar{u}_n \right\|_p^p =  \left\| \nabla u_n \right\|_p^p-\left\| u_0 \right\|_p^p+o_n(1),   &  \left[\bar{u}_n\right]_{s,p}^p  =  [u_n]_{s,p}^p-[u_0]_{s,p}^p+o_n(1),\\
				A(\bar{u}_n) =  A(u_n)-A(u_0)+o_n(1) \text{ and } & \left\| \bar{u}_n \right\|_q^q  =  o_n(1).
			\end{array}
			\right\}
		\end{equation}
		Now, by \eqref{3.5},
		\begin{eqnarray*}
			\lim_{n\rightarrow \infty}M(\bar{u}_n) & = & \lim_{n\rightarrow \infty}\left(\left\| \nabla \bar{u}_n \right\|_p^p+s[\bar{u}_n]_{s,p}^p-\mu \gamma_{p,q}\left\| \bar{u}_n\right\|_q^q-A(\bar{u}_n)\right)\\
			& = & \lim_{n\rightarrow \infty}\left(\left\| \nabla u_n \right\|_p^p+s[u_n]_{s,p}^p-A(u_n)- (\left\| \nabla u_0 \right\|_p^p+s[u_0]_{s,p}^p-A(u_0))\right) \\
			& = & \lim_{n\rightarrow \infty} \left(M(u_n)-\mu\gamma_{p,q}\left\| u_n \right\|_q^q-M(u_0)+\mu\gamma_{p,q}\left\| u_0 \right\|_q^q \right)= 0.
		\end{eqnarray*}
		Therefore, $\displaystyle \lim_{n\rightarrow \infty}\left(\left\|\nabla \bar{u}_n\right\|_p^p+s[\bar{u}_n]_{s,p}^p\right)=\lim_{n\rightarrow \infty}\left(\mu\gamma_{p,q}\left\| \bar{u}_n\right\|_q^q+A(\bar{u}_n)\right)=\lim_{n\rightarrow \infty}A(\bar{u}_n)$. Since $\{\bar{u}_n\}_{n\in \mathbb{N}}$ is bounded in $W^{1,p}(\mathbb{R}^N)$, upto subsequence $\{\left\| \nabla \bar{u}_n\right\|_p^p+s[\bar{u}_n]_{s,p}^p\}$ is convergent. Denoting the convergent subsequence as $\{\left\| \nabla \bar{u}_n\right\|_p^p+s[\bar{u}_n]_{s,p}^p\}_{n\in \mathbb{N}}$ itself, let $l\geq 0$, be such that
		\begin{equation}\label{3.6}
			l=\lim_{n\rightarrow \infty}\left(\left\| \nabla \bar{u}_n\right\|_p^p+s[\bar{u}_n]_{s,p}^p\right)=\lim_{n\rightarrow \infty}A(\bar{u}_n).
		\end{equation}
		Then, by \eqref{S_alpha,p}, we have, either $l=0$ or $l\geq \mathbb{S}^{\frac{2p^*_{\alpha}}{2^p*_{\alpha}-p}}$.\\
		Subclaim: $l=0$.\\
		Suppose $l\geq \mathbb{S}^{\frac{2p^*_{\alpha}}{2p^*_{\alpha}-p}}$, then by \eqref{3.5}, Fatou's lemma and Gagliardo-Nirenberg inequality \eqref{G_N_inequality},
		\begin{eqnarray*}
			m_{\tau} & = & \lim_{n\rightarrow \infty}E(u_n)\\
			& = & \lim_{n\rightarrow \infty}\left(\frac{\left\| \nabla \bar{u}_n\right\|_p^p+\left\|\nabla u_0\right\|_p^p}{p}+\frac{[\bar{u}_n]_{s,p}^p+[u_0]_{s,p}^p}{p}-\mu \frac{\left\| u_n \right\|_q^q}{q}-\frac{A(\bar{u}_n)+A(u_0)}{2p^*_{\alpha}}\right)\nonumber\\
			& \geq & \lim_{n\rightarrow \infty}\left(\frac{\left\| \nabla \bar{u}_n\right\|_p^p+s[\bar{u}_n]_{s,p}^p}{p}
			-\frac{A(\bar{u}_n)}{2p^*_{\alpha}}\right)+E(u_0)\nonumber\\
			& = & \left(\frac{2p^*_{\alpha}-p}{2pp^*_{\alpha}}\right)l+E(u_0)\geq \left(\frac{2p^*_{\alpha}-p}{2pp^*_{\alpha}}\right)\mathbb{S}^{\frac{2p^*_{\alpha}}{2p^*_{\alpha}-p}}+E(u_0)\nonumber\\
			& = & \left(\frac{2p^*_{\alpha}-p}{2pp^*_{\alpha}}\right)\mathbb{S}^{\frac{2p^*_{\alpha}}{2p^*_{\alpha}-p}}+E(u_0) -\frac{M(u_0)}{2p^*_{\alpha}}\nonumber\\
			& \geq & \left(\frac{2p^*_{\alpha}-p}{2pp^*_{\alpha}}\right)T(u_0)^p+\mu\left(\frac{q\gamma_{p,q}-2p^*_{\alpha}}{2qp^*_{\alpha}}\right)\left\| u_0 \right\|_q^q+\left(\frac{2p^*_{\alpha}-p}{2pp^*_{\alpha}}\right)\mathbb{S}^{\frac{2p^*_{\alpha}}{2p^*_{\alpha}-p}}\nonumber\\
			& \geq &\left(\frac{2p^*_{\alpha}-p}{2pp^*_{\alpha}}\right)T(u_0)^p+\mu\left(\frac{q\gamma_{p,q}-2p^*_{\alpha}}{2qp^*_{\alpha}}\right)C_{N,p,q}T(u_0)^{q\gamma_{p,q}}\tau^{q(1-\gamma_{p,q})}+ \left(\frac{2p^*_{\alpha}-p}{2pp^*_{\alpha}}\right)\mathbb{S}^{\frac{2p^*_{\alpha}}{2p^*_{\alpha}-p}}\nonumber\\
			& = & f(T(u_0))+\left(\frac{2p^*_{\alpha}-p}{2pp^*_{\alpha}}\right)\mathbb{S}^{\frac{2p^*_{\alpha}}{2p^*_{\alpha}-p}},
		\end{eqnarray*}
		where $$f(t)=\left(\frac{2p^*_{\alpha}-p}{2pp^*_{\alpha}}\right)t^p+\mu\left(\frac{q\gamma_{p,q}-2p^*_{\alpha}}{2qp^*_{\alpha}}\right)C_{N,p,q}t^{q\gamma_{p,q}}\tau^{q-q\gamma_{p,q}}.$$
		Now, since $$t_0=\left(\frac{\mu C_{N,p,q}\gamma_{p,q}(2p^*_{\alpha}-q\gamma_{p,q})\tau^{q(1-\gamma_{p,q})}}{2p^*_{\alpha}-p}\right)^{\frac{1}{p-q\gamma_{p,q}}}$$ is the point of global minima of $f$, 
		\begin{eqnarray*}
			m_{\tau} & \geq & f(t_0)+\left(\frac{2p^*_{\alpha}-p}{2pp^*_{\alpha}}\right)\mathbb{S}^{\frac{2p^*_{\alpha}}{2p^*_{\alpha}-p}}\\
			& = &-\left(\frac{p-q\gamma_{p,q}}{2pqp^*_{\alpha}}\right)\left(\frac{\gamma_{p,q}}{2p^*_{\alpha}-p}\right)^{\frac{q\gamma_{p,q}}{p-q\gamma_{p,q}}}\left(\mu C_{N,p,q}\tau^{q(1-\gamma_{p,q})}(2p^*_{\alpha}-q\gamma_{p,q})\right)^{\frac{p}{p-q\gamma_{p,q}}}\\
			&& +\left(\frac{2p^*_{\alpha}-p}{2pp^*_{\alpha}}\right)\mathbb{S}^{\frac{2p^*_{\alpha}}{2p^*_{\alpha}-p}}\\ 
			& > &0, \text{ for } \tau<\tau_2 = \left(\frac{(2p^*_{\alpha}-p)}{\mu C_{N,p,q}(2p^*_{\alpha}-q\gamma_{p,q})\gamma_{p,q}^{\frac{q\gamma_{p,q}}{p}}}\left(\frac{q\mathbb{S}^{\frac{2p^*_{\alpha}}{2p^*_{\alpha}-p}}}{p-q\gamma_{p,q}}\right)^{\frac{p-q\gamma_{p,q}}{p}}\right)^{\frac{1}{q(1-\gamma_{p,q})}}.
		\end{eqnarray*}
		But this contradicts \autoref{Lemma 2.6}. Therefore $l=0$. Now, by \eqref{3.5} and \eqref{3.6},  $\displaystyle \lim_{n\rightarrow \infty}A(u_n)=A(u_0)$ and $\displaystyle \lim_{n\rightarrow \infty}T(u_n)\rightarrow T(u_0)$, then taking $u_0$ as test function in \eqref{3.4} and using \eqref{3.1} we get:
		\begin{eqnarray*}
			\lambda_0\left\| u_0 \right\|_p^p & = & E'(u_0)(u_0)-\lim_{n\rightarrow \infty}\left(E'(u_n)(u_n)-\lambda_n\Phi'(u_n)(u_n)\right)= \lambda_0\lim_{n\rightarrow \infty}\left\| u_n \right\|_p^p=\lambda_0 \tau^p.
		\end{eqnarray*}
		Hence $u_0$ is a solution of \eqref{prob} and $u_n\rightarrow u_0$ strongly in $W^{1,p}(\mathbb{R}^N)$. Taking $u_{\tau}^+=u_0$ and $\lambda_{\tau}^+=\lambda_0$, we are done.
	\end{myproof}
	\section{Second Solution}
	Until now, we have seen that the infimum of $E$ on $\mathcal{M}_{\tau}^+$ is achieved and is a solution of \eqref{prob}. In this section, we will see that the infimum over $\mathcal{M}_{\tau}^-$, that is, $m_{\tau}^-$ is also achieved. Since the spaces $\mathcal{M}_{\tau}^+$ and $\mathcal{M}_{\tau}^-$ are disjoint, this corresponds to the second normalized solution. 
	Let us start with the following lemma, that will help us through the technicalities to obtain the second solution:
	\begin{lemma}\label{R_1_R_2_R_3}
		Let $u_{\tau}^+$ be the first solution and $w\in W^{1,p}(\mathbb{R}^N)$, then for a fixed $t_0>0$, we have:
		\begin{enumerate}
			\item $\left\| u_{\tau}^+ +t w \right\|_p^p \leq \left\| u_{\tau}^+ \right\|_p^p + \left\| t w \right\|_p^p+ptK_{1,w}(t_0)\left\| w \right\|_p \left\| u_{\tau}^+\right\|_p^{p-1}$, for all $t\in [\frac{1}{t_0},t_0]$.
			\item $\left\| \nabla u_{\tau}^+ +t \nabla w\right\|_p^p \leq \left\| \nabla u_{\tau}\right\|_p^p+ \left\| t \nabla w \right\|_p^p+ ptK_{2,w}(t_0)\left\| \nabla w \right\|_p\left\| \nabla u_{\tau}^+\right\|_p^{p-1},$ for all $t\in [\frac{1}{t_0},t_0]$.
			\item $[u_{\tau}^++t w]_{s,p}^p \leq [u_{\tau}^+]_{s,p}^p + [tw]_{s,p}^p+ptK_{3,w}(t_0)[w]_{s,p}[u_{\tau}^+]_{s,p}^{p-1}$, for all $t\in [\frac{1}{t_0},t_0]$
		\end{enumerate} 
		where $$K_{1,w}(t_0)= \left(1+\frac{t_0 \left\| w \right\|_p}{\left\| u_{\tau}^+\right\|_p}\right)^{p-1}-\left(t_0\frac{\left\| w \right\|_p}{\left\| u_{\tau}^+ \right\|_p}\right)^{p-1}\geq 1,$$  
		$$K_{2,w}(t_0)=\left(1+\frac{t_0 \left\|\nabla w \right\|_p}{\left\| \nabla u_{\tau}^+\right\|_p}\right)^{p-1}-\left(t_0\frac{\left\| \nabla w \right\|_p}{\left\| \nabla u_{\tau}^+ \right\|_p}\right)^{p-1}\geq 1,$$ and $$K_{3,w}(t_0)= \left(1+\frac{t_0 [w]_{s,p}}{ [u_{\tau}^+]_{s,p}}\right)^{p-1}-\left(t_0\frac{ [w ]_{s,p}}{ [u_{\tau}^+]_{s,p}}\right)^{p-1}\geq 1.$$
	\end{lemma}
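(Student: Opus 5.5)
All three inequalities have the same structure: a seminorm of the form $\|\cdot\|$ built from an $L^p$-type norm: $\|\cdot\|_p$ itself, $\|\nabla\cdot\|_p$, and $[\cdot]_{s,p}$, the latter being the $L^p$ norm of $(x,y)\mapsto (v(x)-v(y))|x-y|^{-(N+sp)/p}$ on $\mathbb{R}^{2N}$. So I would prove one abstract statement for a seminorm $\|\cdot\|$ and apply it three times. The statement: if $a=\|u_\tau^+\|>0$ and $b=\|w\|$, then for $t\in[1/t_0,t_0]$
$$\|u_\tau^++tw\|^p\le a^p+t^pb^p+p\,t\,K\,b\,a^{p-1},\qquad K=\Big(1+\tfrac{t_0b}{a}\Big)^{p-1}-\Big(\tfrac{t_0b}{a}\Big)^{p-1}.$$

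\emph{Step 1: reduce to a scalar estimate.} The triangle inequality gives $\|u_\tau^++tw\|^p\le (a+tb)^p$. It therefore suffices to show $(a+tb)^p-a^p-(tb)^p\le p\,t\,b\,K\,a^{p-1}$.

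\emph{Step 2: scalar estimate.} Set $x=tb/a\ge0$ and divide by $a^p$; the claim becomes
$$(1+x)^p-1-x^p\le p\,x\,\big[(1+x_0)^{p-1}-x_0^{p-1}\big],\qquad x_0=t_0b/a.$$
Since $t\le t_0$, we have $x\le x_0$. The mean value theorem, or integrating $\frac{d}{d\sigma}\big[(1+\sigma)^p-\sigma^p\big]=p\big[(1+\sigma)^{p-1}-\sigma^{p-1}\big]$ over $[0,x]$, gives
$$(1+x)^p-1-x^p=p\int_0^x\big[(1+\sigma)^{p-1}-\sigma^{p-1}\big]\,d\sigma .$$
For $p\ge2$ the integrand $g(\sigma)=(1+\sigma)^{p-1}-\sigma^{p-1}$ is nondecreasing, because $g'(\sigma)=(p-1)\big[(1+\sigma)^{p-2}-\sigma^{p-2}\big]\ge0$. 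Hence the integral is at most $x\,g(x)\le x\,g(x_0)$, which is exactly the needed bound. The same monotonicity gives $K=g(x_0)\ge g(0)=1$, the claimed lower bound on $K_{i,w}(t_0)$.

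\emph{Step 3: apply and check degenerate cases.} Apply the abstract statement with the three seminorms to get items 1–3. The only care needed is that the denominators $\|u_\tau^+\|_p$, $\|\nabla u_\tau^+\|_p$, $[u_\tau^+]_{s,p}$ are nonzero; this holds because $u_\tau^+\neq0$ with $\|u_\tau^+\|_p=\tau$, so it is nonconstant and lies in $W^{1,p}$. If $w$ has zero seminorm in one item, that item is trivial. I expect no real obstacle. The only point to watch is the monotonicity of $g$, which is where the standing assumption $p\ge2$ is used; for $p<2$ the bound would instead hold with $K$ replaced by $1$. Notably, the lower restriction $t\ge1/t_0$ is not needed.
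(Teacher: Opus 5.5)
Your proof is correct and is essentially the paper's argument. Both reduce via the triangle (Minkowski) inequality to the scalar bound and then use that $g(\sigma)=(1+\sigma)^{p-1}-\sigma^{p-1}$ is nondecreasing for $p\ge 2$; the paper phrases this as $f_1(t)=(1+x)^p-p\,g(x_0)\,x-x^p-1$ (with $x=t\|w\|_p/\|u_\tau^+\|_p$) satisfying $f_1'\le 0$ on $[0,t_0]$, which is the differential form of your integral estimate. Your explicit checks that $\|u_\tau^+\|_p$, $\|\nabla u_\tau^+\|_p$, $[u_\tau^+]_{s,p}$ are nonzero and that $K\ge g(0)=1$ fill in points the paper leaves implicit.
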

	\begin{proof}
		In order to prove 1., let us define:
		$$g_1(r):=\left(1+\frac{r\left\| w\right\|_p}{\left\| u_{\tau}^+\right\|_p}\right)^{p-1}-\left(\frac{r\left\| w \right\|_p}{\left\| u_{\tau}^+ \right\|_p}\right)^{p-1}.$$
		Clearly $g_1'(r)\geq 0$ for all $p\geq 2$ and $r\geq 0$, thus $g_1$ is increasing in $[0,\infty)$ and hence 
		$$g_1(r)\leq g_1(t_0) \text{ for all } r\in [0,t_0].$$
		Now, defining
		$$f_1(t):= \left(1+\frac{t \left\| w \right\|_p}{\left\| u_{\tau}^+ \right\|_p}\right)^{p}-pg_1(t_0)\left(\frac{t\left\| w\right\|_p}{\left\| u_{\tau}^+\right\|_p}\right)-\left(\frac{t \left\| w \right\|_p}{\left\| u_{\tau}^+\right\|_p}\right)^p-1,$$
		we get
		$$f_1'(t)=p\left(\frac{\left\| w \right\|_p}{\left\| u_{\tau}^+\right\|_p}\right)\left(g_1(t)-g_1(t_0)\right)\leq 0 \text{ for } t\in \left[0,t_0\right].$$
		Thus, $f_1$ is non-increasng in $[0,t_0]$ and hence $f_1(t)\leq f_1(0)=0$ for all $t\in [0,t_0]$, that is,
		$$\left(1+\frac{t \left\| w \right\|_p}{\left\| u_{\tau}^+ \right\|_p}\right)^{p}\leq pg_1(t_0)\left(\frac{t\left\| w\right\|_p}{\left\| u_{\tau}^+\right\|_p}\right)+\left(\frac{t \left\| w \right\|_p}{\left\| u_{\tau}^+\right\|_p}\right)^p+1 \text{ for all } t\in [0,t_0].$$
		Here, 
		\begin{eqnarray*}
			g_1(t_0) & = & \left(1+\frac{t_0\left\| w \right\|_p}{\left\| u_{\tau}^+\right\|_p}\right)^{p-1}-\left(\frac{t_0\left\| w \right\|_p}{\left\| u_{\tau}^+ \right\|_p}\right)^{p-1}=K_{1,w}(t_0).\\
		\end{eqnarray*}
		Therefore, 
		\begin{eqnarray*}
			\left\| u_{\tau}^++tw \right\|_p^p & \leq & \left(\left\| u_{\tau}^+\right\|_p+t \left\| w \right\|_p\right)^p = \left\| u_{\tau}^+\right\|_p^p\left(1+\frac{t \left\| w\right\|_p}{\left\| u_{\tau}^+\right\|_p}\right)^p\\
			& \leq & p t K_{1,w}(t_0) \left\| w \right\|_p\left\| u_{\tau}^+ \right\|_p^{p-1}+t^p \left\| w \right\|_p^p+ \left\| u_{\tau}^+ \right\|_p^p 
		\end{eqnarray*}
		and hence,	we get 1. Similarly, defining
		$$g_2(r):= \left(1+\frac{r\left\| \nabla w \right\|_p}{\left\| \nabla u_{\tau}^+\right\|_p}\right)^{p-1}-\left(\frac{r\left\| \nabla w \right\|_p}{\left\| \nabla u_{\tau}^+\right\|_p}\right)^{p-1}$$
		and
		$$f_2(t):= \left(1+\frac{t \left\| \nabla w \right\|_p}{\left\| \nabla u_{\tau}^+\right\|_p}\right)^p-pg_2(t_0)\left(\frac{t\left\| \nabla w \right\|_p}{\left\| \nabla u_{\tau}^+\right\|_p}\right)-\left(\frac{t \left\| \nabla w \right\|_p}{\left\| \nabla u_{\tau}^+\right\|_p}\right)^p-1,$$
		we will get
		$$\left(1+\frac{t \left\| \nabla w \right\|_p}{\left\|\nabla u_{\tau}^+ \right\|_p}\right)^{p}\leq pg_2(t_0)\left(\frac{t\left\|\nabla w \right\|_p}{\left\| \nabla u_{\tau}^+\right\|_p}\right)+\left(\frac{t \left\|\nabla w \right\|_p}{\left\| \nabla u_{\tau}^+\right\|_p}\right)^p+1 \text{ for all } t\in [0,t_0],$$
		with
		\begin{eqnarray*}
			g_2(t_0) & = & \left(1+\frac{t_0\left\| \nabla w \right\|_p}{\left\| \nabla u_{\tau}^+\right\|_p}\right)^{p-1}-\left(\frac{t_0\left\| \nabla w \right\|_p}{\left\| \nabla u_{\tau}^+\right\|_p}\right)^{p-1}=K_{2,w}(t_0).\\
		\end{eqnarray*}
		Thus, we get 2. Next, to get 3., we define:
		$$g_3(r):=\left(1+\frac{r[w]_{s,p}}{[u_{\tau}^+]_{s,p}}\right)^{p-1}-\left(\frac{r[w]_{s,p}}{[u_{\tau}^+]_{s,p}}\right)^{p-1}, $$
		$$f_3(t):= \left(1+\frac{t[w]_{s,p}}{[u_{\tau}]_{s,p}}\right)^p-pg_3(t_0)\left(\frac{t[w]_{s,p}}{[u_{\tau}^+]_{s,p}}\right)-\left(\frac{t [w]_{s,p}}{[u_{\tau}^+]_{s,p}}\right)^p-1,$$
		and following the same argument, we derive 3.	
	\end{proof}
	\noindent The following result will play a crucial role in proving the convergence of the Palais Smale sequence, by providing us an upper subcritical bound for $m_{\tau}^-$. 

	\begin{lemma}\label{Lemma 4.1}
		For $p\geq 2$, assume that $p,N$ and $s$ satisfies \eqref{Conditions_p,N,s}
		then for all $\tau<\min\{\tau_1,\tau_2\}$,
		\begin{equation}\label{4.1}
			m_{\tau}^-=\inf_{u\in \mathcal{M}_{\tau}^-}E(u)<m_{\tau}+\left(\frac{2p^*_{\alpha}-p}{2pp^*_{\alpha}}\right)\mathbb{S}^{\frac{2p^*_{\alpha}}{2p^*_{\alpha}-p}}.
		\end{equation}
	\end{lemma}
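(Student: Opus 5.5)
We follow the strategy of Gou et al. and its mixed-operator analogue: test with $u_\tau^+$ plus a concentrating bubble, then project onto $\mathcal{M}_\tau^-$ using the fibre structure from \autoref{Lemma 2.3}.

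\textbf{Test functions.} Let $u_\tau^+\ge0$ be the radially decreasing first solution from \autoref{Theorem 1}, so $E(u_\tau^+)=m_\tau$. Take a cut-off $\varphi\in C_c^\infty(\mathbb{R}^N)$ with $\varphi\equiv1$ on $B_1$ and set $w_\epsilon:=\varphi U_\epsilon$, with $U_\epsilon$ the Talenti function \eqref{U_epsilon}. For $t>0$ put
$$W_{\epsilon,t}:=u_\tau^++t w_\epsilon,\qquad \bar W_{\epsilon,t}:=\frac{\tau}{\|W_{\epsilon,t}\|_p}\,W_{\epsilon,t}\in S(\tau).$$
\autoref{Lemma 2.3} then gives a unique $c_{\epsilon,t}$ with $c_{\epsilon,t}\star \bar W_{\epsilon,t}\in\mathcal{M}_\tau^-$. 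Hence
$$m_\tau^-\le \sup_{t>0}\psi_{\bar W_{\epsilon,t}}(c_{\epsilon,t}).$$
The goal is to show that, for $\epsilon$ small, this supremum is strictly below $m_\tau+\frac{2p^*_\alpha-p}{2pp^*_\alpha}\mathbb{S}^{\frac{2p^*_\alpha}{2p^*_\alpha-p}}$.

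\textbf{Localizing $t$.} We use the usual estimates
$$\|\nabla w_\epsilon\|_p^p=S^{N/p}+O(\epsilon^{\frac{N-p}{p-1}}),\qquad [w_\epsilon]_{s,p}^p=O\big(\epsilon^{\min\{p(1-s),\frac{N-p}{p-1}\}}\big)\ (\text{up to logs}),$$
together with the values of $\|w_\epsilon\|_p^p$, $\|w_\epsilon\|_q^q$ and a lower bound for $A(w_\epsilon)$. The lower bound comes from HLS through \eqref{S_and_S_alpha,p}, since $\mathbb{S}$ has no explicit extremal.

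Next we show that $c_{\epsilon,t}\to-\infty$ as $t\to0$ and that the energy becomes negative as $t\to\infty$. By continuity in $t$ there is then $t_\epsilon$ with $c_{\epsilon,t_\epsilon}=0$, uniformly in $\epsilon$. This uses the argument from Gou et al.: along $\{c_{\epsilon,t}=0\}$ the whole family passes from $\mathcal{M}^+$-type behaviour to $\mathcal{M}^-$-type behaviour. The upshot is that it suffices to estimate $E(\bar W_{\epsilon,t})$, or $E(t\star W)$ over a compact range $t\in[1/t_0,t_0]$, uniformly in $\epsilon$. This is exactly the setting of \autoref{R_1_R_2_R_3}.

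\textbf{Energy expansion.} \autoref{R_1_R_2_R_3} bounds the three seminorms of $u_\tau^++tw_\epsilon$ by
$$\text{(norm of $u_\tau^+$)}+\text{(norm of $tw_\epsilon$)}+\text{(a cross term)}.$$
The cross term is $O(\|w_\epsilon\|_p)$, $O(\|\nabla w_\epsilon\|_p)$ or $O([w_\epsilon]_{s,p})$ respectively.

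For the lower-order and critical terms we use superadditivity, valid since $u_\tau^+,w_\epsilon\ge0$:
$$\|u_\tau^++tw_\epsilon\|_q^q\ge\|u_\tau^+\|_q^q+t^q\|w_\epsilon\|_q^q,$$
$$A(u_\tau^++tw_\epsilon)\ge A(u_\tau^+)+t^{2p^*_\alpha}A(w_\epsilon)+2p^*_\alpha t\int(I_\alpha*|u_\tau^+|^{p^*_\alpha})(u_\tau^+)^{p^*_\alpha-1}w_\epsilon.$$
The last integral is a positive interaction term. Because $u_\tau^+$ is bounded below on $B_1$, it is of order $\epsilon^{\frac{N-p}{p(p-1)}}$.

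Combining these, and using that $u_\tau^+$ solves \eqref{prob} with $\lambda_\tau^+<0$ to absorb the mass renormalization, we get
$$E\le m_\tau+\max_{t}\Big(\tfrac{t^p}{p}\|\nabla w_\epsilon\|_p^p-\tfrac{t^{2p^*_\alpha}}{2p^*_\alpha}A(w_\epsilon)\Big)+C\epsilon^{a}-c\,\epsilon^{\frac{N-p}{p(p-1)}}.$$
Here $\epsilon^{a}$ collects the errors: the cross terms, of order $\epsilon^{\frac{N-p}{p}}$ from the $L^p$ part, the fractional term of order $\epsilon^{p(1-s)/p}$ to the power $1$ after taking the $p$-th root in \autoref{R_1_R_2_R_3}, and the $O(\epsilon^{\frac{N-p}{p-1}})$ gradient remainder. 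The maximum in the middle is at most $\frac{2p^*_\alpha-p}{2pp^*_\alpha}\mathbb{S}^{\frac{2p^*_\alpha}{2p^*_\alpha-p}}+O(\epsilon^{\frac{N-p}{p-1}})$.

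\textbf{Main obstacle.} The hardest step is the exponent bookkeeping: we must show the negative interaction term $\epsilon^{\frac{N-p}{p(p-1)}}$ strictly dominates every positive error. The decisive comparisons are $\frac{N-p}{p(p-1)}<p(1-s)$-type inequalities for the fractional cross term, and $\frac{N-p}{p(p-1)}<\frac{N-p}{p}$-type inequalities for the $L^p$ cross term, the latter needing $p^2>N$ or its substitute. These are precisely the alternatives in \eqref{Conditions_p,N,s}, and a case split along that condition is how we would close the argument. A second difficulty is that the sharp constant $\mathbb{S}$ is only related to $S$ via \eqref{S_and_S_alpha,p}. We would first try to bound the bubble maximum directly by the definition of $\mathbb{S}$, and, if needed, replace $w_\epsilon$ by a near-minimizer of $\mathbb{S}$ with the same decay.
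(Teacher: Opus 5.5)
Your outline follows the paper's route almost step for step: the family $u_\tau^++tw_\epsilon$, mass renormalization, projection onto $\mathcal{M}_\tau^-$ via \autoref{Lemma 2.3}, localization of $t$, \autoref{R_1_R_2_R_3}, superadditivity of $A$, comparison through \eqref{S_and_S_alpha,p}, and the case split along \eqref{Conditions_p,N,s}. The energy estimate does not close, for two reasons.

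\textbf{First, the gradient cross term is of order one.} Part 2 of \autoref{R_1_R_2_R_3} produces the cross term $ptK_{2,w_\epsilon}(t_0)\|\nabla u_\tau^+\|_p^{p-1}\|\nabla w_\epsilon\|_p$. Since $\|\nabla w_\epsilon\|_p^p\to S^{N/p}$, this term is bounded below by a positive constant for $t\in[1/t_0,t_0]$. It is missing from your error term $C\epsilon^{a}$, and no $\epsilon$-small gain can absorb it.

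The fractional cross term has a similar problem. It is of order $[w_\epsilon]_{s,p}\geq c\,\epsilon^{\min\{1-s,\frac{N-p}{p(p-1)}\}}$, so it is never $o(\epsilon^{\frac{N-p}{p(p-1)}})$, and the comparison with $p(1-s)$ is not the relevant one.

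Linear cross terms can only be removed by cancellation. That requires the exact first-order terms $pt\int|\nabla u_\tau^+|^{p-2}\nabla u_\tau^+\cdot\nabla w_\epsilon$ and $pt\ll u_\tau^+,w_\epsilon\gg_{s,p}$, for instance via $|a+b|^p\le|a|^p+|b|^p+p|a|^{p-2}a\cdot b+C(|a|^{p-2}|b|^2+|a||b|^{p-1})$. The H\"older-type majorants of \autoref{R_1_R_2_R_3} do not give them.

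The paper's $\mathcal{X}$-argument does not supply this cancellation either. H\"older gives $\|\nabla u_\tau^+\|_p^{p-1}\|\nabla w_\epsilon\|_p\ge\int|\nabla u_\tau^+|^{p-2}\nabla u_\tau^+\cdot\nabla w_\epsilon$, which is only an upper bound for $\mathcal{X}$. The estimate \eqref{eq4.24} needs a lower bound, and in fact $\mathcal{X}$ is negative of order one.

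Moreover, once you do cancel, the equation $E'(u_\tau^+)=\lambda_\tau^+|u_\tau^+|^{p-2}u_\tau^+$ reduces the linear part to $t\lambda_\tau^+\int(u_\tau^+)^{p-1}w_\epsilon$. The mass renormalization cancels this to first order, as it must, since $u_\tau^+$ is a critical point of $E|_{S(\tau)}$. So the interaction integral $\int(I_\alpha*|u_\tau^+|^{p^*_\alpha})(u_\tau^+)^{p^*_\alpha-1}w_\epsilon\sim\epsilon^{\frac{N-p}{p(p-1)}}$ is used up and cannot be your negative term. A gain would have to come from higher-order mixed terms and be compared with quasilinear remainders such as $\int|\nabla u_\tau^+||\nabla w_\epsilon|^{p-1}$, which is of order $\epsilon^{\frac{N-p}{p}}$. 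That bookkeeping is not in the proposal.

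\textbf{Second, the bubble level sits above the threshold when $p\neq 2$.} HLS, and hence \eqref{S_and_S_alpha,p}, give only an upper bound on $A(w_\epsilon)$, not the lower bound you invoke. What one actually has is $A(w_\epsilon)\to A(U_1)$, so the middle maximum tends to
$\frac{2p^*_\alpha-p}{2pp^*_\alpha}\left(\|\nabla U_1\|_p^p/A(U_1)^{\frac{p}{2p^*_\alpha}}\right)^{\frac{2p^*_\alpha}{2p^*_\alpha-p}}$.

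For $p\neq2$ the Talenti profile is not an HLS extremal; those have the form $(\gamma^2+|x|^2)^{-(N-p)/p}$ after taking the $p^*_\alpha$-th root. Nor does it solve the Euler--Lagrange equation $-\Delta_pV=c\,(I_\alpha*V^{p^*_\alpha})V^{p^*_\alpha-1}$ of $\mathbb{S}$. That would force $U_1^{p^*-p^*_\alpha}=c\,I_\alpha*U_1^{p^*_\alpha}$, and the two sides decay at different rates.

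Hence this limit exceeds $\frac{2p^*_\alpha-p}{2pp^*_\alpha}\mathbb{S}^{\frac{2p^*_\alpha}{2p^*_\alpha-p}}$ by an $\epsilon$-independent amount, and your bound on the middle maximum is false. The same defect sits in \eqref{eq_5.27}, where the upper bound \eqref{A(u_epsilon)} is put into a denominator.

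Your proposed fixes do not work. The definition of $\mathbb{S}$ only gives the reverse inequality. A near-minimizer leaves a fixed excess that an $\epsilon$-small gain cannot beat. You would need a genuine minimizer of $\mathbb{S}$, with quantitative decay of it and its gradient, to build the concentrating test functions.

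\textbf{A smaller slip in the localization.} As $t\to0$, $\bar W_{\epsilon,t}\to u_\tau^+\in\mathcal{M}_\tau^+$, so $c_{\epsilon,t}\to c_{u_\tau^+}>0$, not $-\infty$. The sign change of $c_{\epsilon,t}$ comes from large $t$. That $t_\epsilon\in[1/t_0,t_0]$ then follows, as in the paper, from $E(\bar W_{\epsilon,t_\epsilon})\ge m_\tau^->0$ together with negativity of the energy for small and large $t$.
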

	\begin{proof}
		Let $\phi_r\in C_c^{\infty}(\mathbb{R}^N)$ be a radially decreasing cut-off function such that 
		\begin{equation}\label{4.2}
			\left\{
			\begin{array}{cl}
				0\leq \phi_r(x) \leq 1 & \text{ for all } x\in \mathbb{R}^N,\\
				\phi_r (x) = 1 & \text{ for } x\in B_r(0),\\
				\phi_r(x)=0  & \text{ for } x\in \mathbb{R}^N\setminus B_{2r}(0).
			\end{array}
			\right.
		\end{equation}
		Then, taking $u_{\epsilon}=\phi_r U_{\epsilon}$, where $U_{\epsilon}$ is as defined in \eqref{U_epsilon}, by  \cite[lemma~5.3]{da2024MIxed}, \cite[Lemma~2.2]{Alves2003Multiplicity} and \cite[Lemma~5.2]{Kaur2010multiplicity} we have:
		\begin{equation}\label{grad_u_epsilon}
			\left\| \nabla u_{\epsilon}\right\|_p^p = S^{\frac{N}{p}}+O\left(\epsilon^{\frac{N-p}{p-1}}\right),
		\end{equation}
		\begin{equation}\label{u_epsilon_p*}
			\left\| u_{\epsilon} \right\|_{p^*}^{p^*} = S^{\frac{N}{p}}+O\left(\epsilon^{\frac{N}{p-1}}\right),
		\end{equation}
		\begin{equation}\label{[U_epsilon]}
			[u_{\epsilon}]_{s,p}^p = O\left(\epsilon^{m_{N,s}}\right) \text{ where } m_{N,p,s}= \min\left\{\frac{N-p}{p-1},p(1-s)\right\},
		\end{equation}
		\begin{equation}\label{U_epsilon_q}
			\left\| u_{\epsilon} \right\|_q^q  =  \left\{
			\begin{array}{ll}
				O\left(\epsilon^{\frac{(N-p)q}{p(p-1)}}\right)	& \text{ for } q<\frac{N(p-1)}{N-p},\\
				O\left(\epsilon^{N-\frac{q(N-p)}{p}}\right) & \text{ for } \frac{N(p-1)}{N-p}<q
			\end{array}
			\right.
		\end{equation}
		and for $q=\frac{N(p-1)}{p}$, we have
		\begin{eqnarray*}
			\left\| u_{\epsilon}\right\|_q^q & = & \int_{B_{r}(0)}|U_{\epsilon}|^q +\int_{\mathbb{R}^N\setminus B_r(0)}|\phi_rU_{\epsilon}|^q \leq \int_{B_{r}(0)}|U_{\epsilon}|^q + \int_{B_{2r}(0)\setminus B_r(0)} |U_{\epsilon}|^q\\
			& = & C_1 \epsilon^{\frac{N}{p}}\ln(1/\epsilon) + O(\epsilon^{\frac{N}{p}}). 
		\end{eqnarray*}
		Particularly,
		\begin{equation}\label{U_epsilon_p}
			\left\| u_{\epsilon}\right\|_p^p= \left\{
			\begin{array}{ll}
				O(\epsilon^p) & \text{ for } p^2<N,\\
				C_1\epsilon^{p}\ln(1/\epsilon) +O(\epsilon^p) & \text{ for }p^2=N,\\
				O(\epsilon^{\frac{N-p}{p-1}}) & \text{ for } p^2>N.
			\end{array}
			\right.
		\end{equation}
		Furthermore, \autoref{prop1}  gives us:
		\begin{equation*}
			A(u_{\epsilon})  \leq  C_{N,\alpha,p}A_{N,\alpha}\left(\left\| u_{\epsilon} \right\|_{p^*}^{p^*}\right)^{\frac{N+\alpha}{N}}.
		\end{equation*}
		Therefore by \eqref{u_epsilon_p*}, we get
		\begin{equation}\label{A(u_epsilon)}
			A(u_{\epsilon}) \leq C_{N,\alpha,p}A_{N,\alpha}S^{\frac{N+\alpha}{p}}+O\left(\epsilon^{\frac{N}{p-1}}\right).
		\end{equation}
		
		For $\zeta, t\geq 0$, define 
		$$\hat{u}_{\epsilon,t}(x):=u_{\tau}^+(x)+tu_{\epsilon}(x);\text{ and }\bar{u}_{\epsilon,t}(x):=\zeta_{\epsilon,t}^{\frac{N-p}{p}}\hat{u}(\zeta x),$$ 
		with $u_{\tau}^+$ being the radial solution deduced in \autoref{Theorem 1}. We will see that $\displaystyle m_{\tau}^-\leq \sup_{t\geq 0}E(\bar{u}_{\epsilon,t})$ and $E(\bar{u}_{\epsilon,t})<m_{\tau}+\left(\frac{2p^*_{\alpha}-p}{2pp^*_{\alpha}}\right)\mathbb{S}^{\frac{2p^*_{\alpha}}{2p^*_{\alpha}-p}}$ for all $t>0$ and small enough $\epsilon>0$.
		Clearly,
		\begin{equation}\label{4.9}
			\left\{
			\begin{array}{c}
				\left\| \nabla \bar{u}_{\epsilon,t} \right\|_p^p=\left\| \nabla\hat{u}_{\epsilon,t}\right\| _p^p; \;\;
				[\bar{u}_{\epsilon,t}]_{s,p}^p = \zeta^{p(s-1)}[\hat{u}_{\epsilon,t}]_{s,p}^p;\;\;\left\| \bar{u}_{\epsilon,t}\right\|_p^p=\zeta^{-p}\left\| \hat{u}_{\epsilon,t} \right\|_p^p\\
				\left\| \bar{u}_{\epsilon,t} \right\|_q^q=\zeta^{q\gamma_{p,q}-q}\left\| \hat{u}_{\epsilon,t}\right\|_q^q;\;\; A(\bar{u}_{\epsilon,t})= A(\hat{u}_{\epsilon,t}).
			\end{array}
			\right.
		\end{equation}
		Then, taking $\zeta=\zeta_{\epsilon,t}=\frac{\left\| \hat{u}_{\epsilon,t}\right\|_p}{\tau}$, we get $\bar{u}_{\epsilon,t}\in S(\tau)$. 
		Thus by \autoref{Lemma 2.3}, we can find $\bar{c}_{\epsilon,t}\in \mathbb{R}$ such that $\bar{c}_{\epsilon,t}\star \bar{u}_{\epsilon,t}\in \mathcal{M}_{\tau}^-$, or, $c_{\epsilon,t}\circledast \bar{u}_{\epsilon,t}\in \mathcal{M}_{\tau}^-$ where $c_{\epsilon,t}=e^{\bar{c}_{\epsilon,t}}>0$. Therefore,
		\begin{equation*}
			0  =  M(c_{\epsilon,t}\circledast \bar{u}_{\epsilon,t}) = c_{\epsilon,t}^p \left\| \nabla \bar{u}_{\epsilon,t} \right\|_p^p+s c_{\epsilon,t}^{ps}[\bar{u}_{\epsilon,t}]_{s,p}^p-\mu\gamma_{p,q} c_{\epsilon,t}^{q\gamma_{p,q}}\left\| \bar{u}_{\epsilon,t}\right\|_q^q-c_{\epsilon,t}^{2p^*_{\alpha}}A(\bar{u}_{\epsilon,t})
		\end{equation*}
		and hence,
		\begin{equation}\label{4.10}
			c_{\epsilon,t}^{p-q\gamma_{p,q}} \left\| \nabla \bar{u}_{\epsilon,t} \right\|_p^p+s c_{\epsilon,t}^{ps-q\gamma_{p,q}}[\bar{u}_{\epsilon,t}]_{s,p}^p=\mu\gamma_{p,q} \left\| \bar{u}_{\epsilon,t}\right\|_q^q+c_{\epsilon,t}^{2p^*_{\alpha}-q\gamma_{p,q}}A(\bar{u}_{\epsilon,t}).
		\end{equation}
		Now, since $0\star \hat{u}_{\epsilon,0}=u_{\tau}^+\in \mathcal{M}_{\tau}^+$, by \autoref{Lemma 2.3}, $\bar{c}_{\epsilon,0}>0$, that is, $c_{\epsilon,0}>1$. Also, by \eqref{4.10}
		\begin{equation*}
			c_{\epsilon,t}^{2p^*_{\alpha}} \leq \frac{c_{\epsilon,t}^p \left\| \nabla\bar{u}_{\epsilon,t}\right\|_p^p+sc_{\epsilon,t}^{ps}[\bar{u}_{\epsilon,t}]_{s,p}^p}{A(\bar{u}_{\epsilon,t})}.
		\end{equation*}
		Defining $$B_{\epsilon,t}:=\frac{\left\| \nabla \bar{u}_{\epsilon,t}\right\|_p^p+s[\bar{u}_{\epsilon,t}]_{s,p}^p}{A(\bar{u}_{\epsilon,t})}, $$
		we get $0<c_{\epsilon,t}\leq \max\left\{B_{\epsilon,t}^{\frac{1}{2p^*_{\alpha}-p}}, B_{\epsilon,t}^{\frac{1}{2p^*_{\alpha}-sp}}\right\}$. By \eqref{4.9}, we have:
		\begin{eqnarray*}
			B_{\epsilon,t} & = & \frac{\left\| \nabla \bar{u}_{\epsilon,t}\right\|_p^p+s[\bar{u}_{\epsilon,t}]_{s,p}^p}{A(\bar{u}_{\epsilon,t})}=\frac{\left\|\nabla \hat{u}_{\epsilon,t}\right\|_p^p+s\zeta_{\epsilon,t}^{p(s-1)}[\hat{u}_{\epsilon,t}]_{s,p}^p}{A(\hat{u}_{\epsilon,t})}\\
			& = &\frac{1}{A(\hat{u}_{\epsilon,t})}\left(\left\|\nabla \hat{u}_{\epsilon,t}\right\|_p^p+s\left(\frac{\tau}{\left\| \hat{u}_{\epsilon,t}\right\|_p^p}\right)^{p(1-s)}[\hat{u}_{\epsilon,t}]_{s,p}^p\right)\leq \frac{\left\| \nabla \hat{u}_{\epsilon,t}\right\|_p^p+s[\hat{u}_{\epsilon,t}]_{s,p}^p}{A(\hat{u}_{\epsilon,t})}\\
			& \leq & C\left(\frac{\left\| \nabla u_{\tau}^+\right\|_p^p+t^p\left\| \nabla u_{\epsilon}\right\|_p^p+s[u_{\tau}^+]_{s,p}^p+st^p[u_{\epsilon}]_{s,p}^p}{t^{2p^*_{\alpha}}A(u_{\epsilon})}\right)\rightarrow 0 \text{ as } t\rightarrow\infty,
		\end{eqnarray*}
		and hence $c_{\epsilon,t}\rightarrow 0$ as $t\rightarrow\infty$. 
		Since $c_{\epsilon,0}>1$, there exists some $t_{\epsilon}>0$ such that $c_{\epsilon,t_{\epsilon}}=1$ (and $\bar{c}_{\epsilon,t_{\epsilon}}=0$), which implies that
		\begin{equation}\label{4.11}
			m_{\tau}^-=\inf_{u\in \mathcal{M}_{\tau}^-}E(u)\leq E(c_{\epsilon,t_{\epsilon}}\circledast \bar{u}_{\epsilon,t_{\epsilon}})= E(\bar{u}_{\epsilon,t_{\epsilon}})\leq \sup_{t\geq 0}E(\bar{u}_{\epsilon,t}). 
		\end{equation}
		Now, since $\hat{u}_{\epsilon,t}\geq u_{\tau}^+$ by \eqref{4.9} and definition of $\bar{u}_{\epsilon,t}$, we have:
		\begin{equation}\label{eq4.12}
			E(\bar{u}_{\epsilon,t})  =  \frac{\left\| \nabla u_{\tau}^++t\nabla u_{\epsilon}\right\|_p^p}{p}+\frac{\zeta_{\epsilon,t}^{p(s-1)}}{p}[u_{\tau}^++tu_{\epsilon}]_{s,p}^p-\frac{\zeta_{\epsilon,t}^{q(\gamma_{p,q}-1)}\mu}{q}\left\| u_{\tau}^++tu_{\epsilon}\right\|_q^q-\frac{A(u_{\tau}^++tu_{\epsilon})}{2p^*_{\alpha}}.
		\end{equation}
		Further, by the generalised binomial theorem and the fact that $\gamma_{p,q}<1$, one gets
		\begin{equation}\label{eq4.12_a}
			\left\| \nabla u_{\tau}^+ +t \nabla u_{\epsilon}\right\|_p^p \leq \left(\left\| \nabla u_{\tau}^+\right\|_p+t \left\| \nabla u_{\epsilon}\right\|_p\right)^p = \left\| \nabla u_{\tau}^+\right\|_p^p+O(t),
		\end{equation}
		\begin{eqnarray}\label{eq4.12_b}
			\zeta_{\epsilon,t}^{p(s-1)}[u_{\tau}^++tu_{\epsilon}]_{s,p}^p & = & \left(\frac{\left\| \hat{u}_{\epsilon,t}\right\|_p}{\tau}\right)^{p(s-1)}[u_{\tau}^++tu_{\epsilon}]_{s,p}^p\leq [u_{\tau}^++tu_{\epsilon}]_{s,p}^p\nonumber\\
			& \leq & \int_{\mathbb{R}^N}\int_{\mathbb{R}^N}\frac{\left(|u_{\tau}^+(x)-u_{\tau}^+(y)|+t|u_{\epsilon}(x)-u_{\epsilon}(y)|\right)^p}{|x-y|^{N+sp}} dxdy\nonumber\\
			& = & [u_{\tau}^+]_{s,p}^p +O(t),
		\end{eqnarray}
		\begin{eqnarray}\label{eq4.12_c}
			\zeta_{\epsilon,t}^{q\gamma_{p,q}-q}\left\| u_{\tau}^++tu_{\epsilon}\right\|_q^q & \geq & \left(1+\frac{t \left\| u_{\epsilon}\right\|_p}{\tau}\right)^{q\gamma_{p,q}-q}\left\| u_{\tau}^++tu_{\epsilon}\right\|_q^q \nonumber\\
			& = & \left(1+O(t)\right)\left\| u_{\tau}^++tu_{\epsilon}\right\|_q^q \geq \left(1+O(t)\right)\left\| u_{\tau}^+\right\|_q^q\nonumber\\
			& = & \left\| u_{\tau}^+\right\|_q^q+O(t),
		\end{eqnarray}
		and
		\begin{eqnarray}\label{eq4.12_d}
			A(u_{\tau}^++tu_{\epsilon}) & = & \int_{\mathbb{R}^N}\int_{\mathbb{R}^N}\frac{A_{N,\alpha}|(u_{\tau}^++tu_{\epsilon})(x)|^{p^*_{\alpha}}|(u_{\tau}^++tu_{\epsilon})(y)|^{p^*_{\alpha}}}{|x-y|^{N-\alpha}}dxdy\nonumber\\
			& \geq & \int_{\mathbb{R}^N}\int_{\mathbb{R}^N}\frac{A_{N,\alpha}|(u_{\tau}^+)(x)|^{p^*_{\alpha}}|(u_{\tau}^+)(y)|^{p^*_{\alpha}}}{|x-y|^{N-\alpha}}dxdy = A(u_{\tau}^+).
		\end{eqnarray}
		Using \eqref{eq4.12_a}, \eqref{eq4.12_b}, \eqref{eq4.12_c} and \eqref{eq4.12_d} in \eqref{eq4.12} we get:
		\begin{eqnarray}\label{4.12}
			E(\bar{u}_{\epsilon,t}) & \leq & \frac{\left\| \nabla u_{\tau}^+\right\|_p^p}{p}+\frac{[u_{\tau}^+]_{s,p}^p}{p}-\frac{\mu \left\| u_{\tau}^+ \right\|_q^q}{q}-\frac{A(u_{\tau}^+)}{2p^*_{\alpha}}+O(t)\nonumber\\
			&& \rightarrow E(u_{\tau}^+) =m_{\tau} <0 \text{ as } t\rightarrow0^+,
		\end{eqnarray}
		uniformly for all $\epsilon>0$.
		Also, since $u_{\tau}^+$ solves \eqref{prob} for some $\lambda_{\tau}^+\in \mathbb{R}$ and it satisfies the Poho\v{z}aev identity, we get the following:
		\begin{equation*}
			\lambda_{\tau}^+ \tau^p = \mu (\gamma_{p,q}-1)\left\| u_{\tau}^+ \right\|_q^q +(1-s) [u_{\tau}^+]_{s,p}^p.
		\end{equation*}
		Thus, we get
		\begin{eqnarray}\label{eq_4.17}
			\mu \zeta_{\epsilon,t}^{q\gamma_{p,q}-q} \left\| u_{\tau}^+ +tu_{\epsilon} \right\|_q^q & \geq & \mu \zeta_{\epsilon,t}^{q\gamma_{p,q}-q}\left\| u_{\tau}^+ \right\|_q^q = \mu \left(\frac{\left\| u_{\tau}^++tu_{\epsilon}\right\|_p}{\tau}\right)^{q\gamma_{p,q}-q}\left\| u_{\tau}^+ \right\|_q^q\nonumber\\
			& \geq & \mu\left(1+\frac{t \left\| u_{\epsilon}\right\|_p}{\tau}\right)^{q\gamma_{p,q}-q}\left\| u_{\tau}^+ \right\|_q^q\nonumber\\
			& \geq & \left(1+\frac{tq(\gamma_{p,q}-1)}{\tau}\left\| u_{\epsilon}\right\|_p\right)\left\| u_{\tau}^+\right\|_q^q\nonumber\\
			& = & \left(1+\frac{tq(\gamma_{p,q}-1)}{\tau}\left\| u_{\epsilon}\right\|_p\right)\left(\frac{(1-s)}{1-\gamma_{p,q}}[u_{\tau}^+]_{s,p}^p-\frac{\lambda_{\tau}^+\tau^p}{1-\gamma_{p,q}}\right)
		\end{eqnarray}
		since $\gamma_{p,q}<1$ and $(1+z)^{\gamma}\geq 1+z\gamma$ for $\gamma< 0$ and $z\in \mathbb{R}$. Now, using \eqref{eq_4.17} in \eqref{eq4.12} together with the fact that $\zeta_{\epsilon,t}^{p(s-1)}\leq 1$ and $(a+b)^p\leq 2^{p-1}(a^p+b^p) $ for all $a,b\geq 0$, $p>1$, we get
		\begin{eqnarray}\label{eq_4.18}
			E(\bar{u_{\epsilon,t}}) & \leq & \frac{2^{p-1}}{p}\left(\left\| \nabla u_{\tau}^+\right\|_p^p +t^p \left\| \nabla u_{\epsilon}\right\|_p^p\right)+ \frac{2^{p-1}}{p}\left([u_{\tau}^+]_{s,p}^p+t^p [u_{\epsilon}]_{s,p}^p\right) -\frac{A(u_{\tau}^++tu_{\epsilon})}{2p^*_{\alpha}}\nonumber\\
			&& -\frac{1}{q}\left(1+\frac{tq(\gamma_{p,q}-1)}{\tau}\left\| u_{\epsilon}\right\|_p\right)\left(\frac{(1-s)}{1-\gamma_{p,q}}[u_{\tau}^+]_{s,p}^p-\frac{\lambda_{\tau}^+\tau^p}{1-\gamma_{p,q}}\right)\nonumber\\
			& = & \frac{2^{p-1}}{p}\left(t^p \left\| \nabla u_{\epsilon}\right\|_p^p+t^p [u_{\epsilon}]_{s,p}^p\right) +\frac{t(1-s)}{\tau}\left\| u_{\epsilon} \right\|_p[u_{\tau}^+]_{s,p}^p -\lambda_{\tau}^+ \tau^{p-1}\left\| u_{\epsilon} \right\|_p t \nonumber\\
			&& -\frac{A(u_{\tau}^++tu_{\epsilon})}{2p^*_{\alpha}} +K_{p,q,\tau},
		\end{eqnarray}
		where $$K_{p,q,\tau}= \frac{2^{p-1}}{p}\left(\left\| \nabla u_{\tau}^+\right\|_p^p+[u_{\tau}^+]_{s,p}^p\right)-\frac{(1-s)[u_{\tau}^+]_{s,p}^p}{q(1-\gamma_{p,q})}+\frac{\lambda_{\tau}^+ \tau^p}{q(1-\gamma_{p,q})}$$
		is independent of $\epsilon$ and $t$. Now, since  $A(u_{\tau}^++u_{\epsilon})\geq t^{2p^*_{\alpha}}A(u_{\epsilon})$ with
		\begin{eqnarray*}
			A(u_{\epsilon}) & = & \int_{\mathbb{R}^N} \int_{\mathbb{R}^N} \frac{A_{N,\alpha}|u_{\epsilon}(x)|^{p^*_{\alpha}}|u_{\epsilon}(y)|^{p^*_{\alpha}}}{|x-y|^{N-\alpha}}dxdy\\
			& \geq &  \int_{B_{r}(0)}\int_{B_{r}(0)}\frac{A_{N,\alpha}|U_{\epsilon}(x)|^{p^*_{\alpha}}|U_{\epsilon}(y)|^{p^*_{\alpha}}}{|x-y|^{N-\alpha}}dxdy\\
			& = &  K_{N,p}^{2p^*_{\alpha}} \epsilon^{\frac{N+\alpha}{p-1}}\int_{B_{r}(0)}\int_{B_{r}(0)} \frac{A_{N,\alpha}dxdy}{|x-y|^{N-\alpha}(\epsilon^{\frac{p}{p-1}}+|x|^{\frac{p}{p-1}})^{\frac{N+\alpha}{2}}(\epsilon^{\frac{p}{p-1}}+|y|^{\frac{p}{p-1}})^{\frac{N+\alpha}{2}}}\\
			& = & \frac{ K_{N,p}^{2p^*_{\alpha}} \epsilon^{\frac{N+\alpha}{p-1}}}{\epsilon^{\frac{p(N+\alpha)}{p-1}}}\int_{B_{r}(0)}\int_{B_{r}(0)}\frac{A_{N,\alpha}dxdy}{|x-y|^{N-\alpha}(1+|\frac{x}{\epsilon}|^{\frac{p}{p-1}})^{\frac{N+\alpha}{2}}(1+|\frac{y}{\epsilon}|^{\frac{p}{p-1}})^{\frac{N+\alpha}{2}}}\\
			& = &  K_{N,p}^{2p^*_{\alpha}}\epsilon^{-N-\alpha}\int_{|y|<\frac{r}{\epsilon}}\int_{|x|<\frac{r}{\epsilon}}\frac{A_{N,\alpha}\epsilon^{2N}dxdy}{|\epsilon x -\epsilon y|^{N-\alpha}(1+|x|^{\frac{p}{p-1}})^{\frac{N+\alpha}{2}}(1+|y|^{\frac{p}{p-1}})^{\frac{N+\alpha}{2}}}\\
			& = & \int_{|x|<\frac{r}{\epsilon}}\int_{|y|<\frac{r}{\epsilon}}\frac{A_{N,\alpha}|U_{1}(x)|^{p^*_{\alpha}}|U_1(y)|^{p^*_{\alpha}}}{|x-y|^{N-\alpha}}dxdy\rightarrow A(U_1) \text{ as } \epsilon \rightarrow 0,
		\end{eqnarray*}
		and by \eqref{eq_4.18}, we get
		\begin{eqnarray}\label{eq_4.19}
			E(\bar{u}_{\epsilon,t}) & \leq & \frac{2^{p-1}}{p}\left(t^p \left\| \nabla u_{\epsilon}\right\|_p^p+t^p [u_{\epsilon}]_{s,p}^p\right) +\frac{t(1-s)}{\tau}\left\| u_{\epsilon} \right\|_q[u_{\tau}^+]_{s,p}^p -\lambda_{\tau}^+ \tau^{p-1}\left\| u_{\epsilon} \right\|_p t \nonumber\\
			&& -t^{2p^*_{\alpha}}\frac{A(u_{\epsilon})}{2p^*_{\alpha}} +K_{p,q,\tau} \rightarrow -\infty \text{ as } t\rightarrow \infty,
		\end{eqnarray}
		uniformly for all $\epsilon>0$, since $\left\| \nabla u \right\|_p^p+[u_{\epsilon}]_{s,p}^p$ are bounded independently of $\epsilon$.
		\noindent Furthermore, since $\bar{c}_{\epsilon, t_{\epsilon}}=0$, we get $\bar{u}_{\epsilon,t_{\epsilon}}=0\star \bar{u}_{\epsilon,t_{\epsilon}}=\bar{c}_{\epsilon,t_{\epsilon}}\star \bar{u}_{\epsilon,t_{\epsilon}}\in \mathcal{M}_{\tau}^-$. Hence by definition of $\bar{c}_{\epsilon,t_{\epsilon}}$ and $2.$ of \autoref{Lemma 2.3} 
		\begin{eqnarray*}
			E(\bar{u}_{\epsilon, t_{\epsilon}}) & = & E(\bar{c}_{\epsilon,t_{\epsilon}}\star \bar{u}_{\epsilon,t_{\epsilon}})=\max\{E(t\star \bar{u}_{\epsilon,t_{\epsilon}}): t\in \mathbb{R}\}>0.
		\end{eqnarray*}
		
	\noindent Thus by \eqref{4.12} and \eqref{eq_4.19}, there exists some $t_0>0$ large enough such that $E(\bar{u}_{\epsilon,t})<0$ for $t\in (0,\frac{1}{t_0})\cup (t_0,\infty)$. Therefore, we need to estimate $E(\bar{u}_{\epsilon,t})$ in $[\frac{1}{t_0}, t_0]$. Above analysis can be summerized by the following plot:
	\begin{center}
		\begin{tikzpicture}
			\draw[thick,<->] (-1,0)--(4.5,0) node[anchor=north west]{$t$};
			\draw[thick,<->] (0,-1.5)--(0,1) node[anchor=east]{$E(\bar{u}_{\epsilon,t})$};
			\draw[thick] (1,0).. controls (0,-0.5)..(0,-1) node[anchor=east]{$m_{\tau}$};
			\draw[thick,->] (3,0)..controls(4,-0.5)..(4.5,-1);
			\draw [thick](2,0) node[anchor= north]{$t_{\epsilon}$};
			\draw [dash dot](2,0)--(2,0.5) node[anchor=west]{$E(\bar{u}_{\epsilon,t_{\epsilon}})$};
			\draw (2,0.75) node[anchor=north]{$\bullet$};
			\draw (1,0) node[anchor=north]{$1/t_0$};
			\draw (3,0) node[anchor=north]{$t_0$};
		\end{tikzpicture}
	\end{center}
	Now, let us study $E(\bar{u}_{\epsilon,t})$ for $t\in [1/t_0,t_0]$. 
	Next, we will show that
	$$E(\bar{u}_{\epsilon,t})<m_{\tau}+\left(\frac{2p^*_{\alpha}-p}{2pp^*_{\alpha}}\right)\mathbb{S}^{\frac{2p^*_{\alpha}}{2p^*_{\alpha}-p}} \text{ for } t\in \left[\frac{1}{t_0},t_0\right].$$
	Now, using 2. and 3. of \autoref{R_1_R_2_R_3} for $w= u_{\epsilon}\in W^{1,p}(\mathbb{R}^N)$ in \eqref{eq4.12} we get:
	\begin{eqnarray}\label{eq4.21}
		E(\bar{u}_{\epsilon,t})& = & \frac{\left\| \nabla u_{\tau}^++t\nabla u_{\epsilon}\right\|_p^p}{p}+\frac{\zeta_{\epsilon,t}^{p(s-1)}}{p}[u_{\tau}^++tu_{\epsilon}]_{s,p}^p-\frac{\zeta_{\epsilon,t}^{q(\gamma_{p,q}-1)}\mu}{q}\left\| u_{\tau}^++tu_{\epsilon}\right\|_q^q-\frac{A(u_{\tau}^++tu_{\epsilon})}{2p^*_{\alpha}}\nonumber\\
		& \leq & \frac{\left\| \nabla u_{\tau}^++t\nabla u_{\epsilon}\right\|_p^p}{p}+\frac{[u_{\tau}^++tu_{\epsilon}]_{s,p}^p}{p}-\frac{\zeta_{\epsilon,t}^{q(\gamma_{p,q}-1)}\mu}{q}\left\| u_{\tau}^++tu_{\epsilon}\right\|_q^q-\frac{A(u_{\tau}^++tu_{\epsilon})}{2p^*_{\alpha}}\nonumber\\
		& \leq & \frac{\left\| \nabla u_{\tau}^+ \right\|_p^p}{p} +\frac{\left\| \nabla (tu_{\epsilon})\right\|_p^p}{p} +K_{2,u_{\epsilon}}(t_0)t \left\| \nabla u_{\tau}^+\right\|_p^{p-1} \left\| \nabla u_{\epsilon}\right\|_p +\frac{[u_{\tau}^+]_{s,p}^p}{p}+\frac{[tu_{\epsilon}]_{s,p}^p}{p} \nonumber\\
		&&+tK_{3,u_{\epsilon}}(t_0)[u_{\epsilon}]_{s,p} [u_{\tau}^+]_{s,p}^{p-1} -\frac{\mu \zeta_{\epsilon,t}^{q(\gamma_{p,q}-1)}}{q} \left\| u_{\tau}^++t u_{\epsilon} \right\|_q^q -\frac{A(u_{\tau}^++tu_{\epsilon})}{2p^*_{\alpha}}.
	\end{eqnarray}
	Also, we have:
	\begin{eqnarray}\label{eq4.22}
		A(u_{\tau}^++tu_{\epsilon}) & = & \int_{\mathbb{R}^N} \int_{\mathbb{R}^N} \frac{A_{N,\alpha}|(u_{\tau}^++t u_{\epsilon})(x)|^{p^*_{\alpha}}|(u_{\tau}^++tu_{\epsilon})(y)|^{p^*_{\alpha}}}{|x-y|^{N-\alpha}}dxdy\nonumber\\
		& \geq & \int_{\mathbb{R}^N}\int_{\mathbb{R}^N} \frac{A_{\alpha}\left(|tu_{\epsilon}(x)|^{p^*_{\alpha}}|tu_{\epsilon}(y)|^{p^*_{\alpha}}+|u_{\tau}^+(x)|^{p^*_{\alpha}}|u_{\tau}^+(y)|^{p^*_{\alpha}}\right)}{|x-y|^{N-\alpha}}dxdy\nonumber\\
		&& +\int_{\mathbb{R}^N}\int_{\mathbb{R}^N}\frac{A_{\alpha}p^*_{\alpha}\left(|u_{\tau}^+(x)|^{p^*_{\alpha}-1}|u_{\tau}^+(y)|^{p^*_{\alpha}}|tu_{\epsilon}(x)|+|u_{\tau}^+(x)|^{p^*_{\alpha}-1}|u_{\tau}^+(y)|^{p^*_{\alpha}}|tu_{\epsilon}(x)\right)}{|x-y|^{N-\alpha}}dxdy\nonumber\\
		& = & A(tu_{\epsilon})+A(u_{\tau}^+) +2p^*_{\alpha}\int_{\mathbb{R}^N}(I_{\alpha}*|u_{\tau}^+|^{p^*_{\alpha}})|u_{\tau}^+|^{p^*_{\alpha}-2}u_{\tau}^+ (tu_{\epsilon})dxdy,
	\end{eqnarray}
	since, for any $a,b,c,d>0$ and $z>1$ we have:
	\begin{eqnarray*}
		(a+b)^z(c+d)^z & = & \left(ac+ad+bc+bd\right)^z \geq (ac+bc+ad)^z +(bd)^z\\
		& = & (bd)^z +(ac)^z\left(1+ \frac{bc+ad}{ac}\right)^z\\
		& \geq & b^zd^z + a^zc^z +z(ac)^z \left(\frac{bc+ad}{ac}\right)=b^zd^z + a^zc^z +z a^{z-1}c^zb + za^zc^{z-1}d.
	\end{eqnarray*}
	Thus, \eqref{eq4.22} transforms \eqref{eq4.21} into the following:
	\begin{eqnarray}\label{eq4.23}
		E(\bar{u}_{\epsilon,t}) & \leq & \frac{\left\| \nabla u_{\tau}^+ \right\|_p^p}{p} +\frac{\left\| \nabla (tu_{\epsilon})\right\|_p^p}{p} +tK_{2,u_{\epsilon}}(t_0) \left\| \nabla u_{\tau}^+\right\|_p^{p-1} \left\| \nabla u_{\epsilon}\right\|_p +\frac{[u_{\tau}^+]_{s,p}^p}{p}+\frac{[tu_{\epsilon}]_{s,p}^p}{p} \nonumber\\
		&&+tk_{3,u_{\epsilon}}(t_0)[u_{\epsilon}]_{s,p} [u_{\tau}^+]_{s,p}^{p-1}
		-\frac{\mu \zeta_{\epsilon,t}^{q(\gamma_{p,q}-1)}}{q} \left\| u_{\tau}^++t u_{\epsilon} \right\|_q^q -\frac{A(tu_{\epsilon})}{2p^*_{\alpha}}-\frac{A(u_{\tau}^+)}{2p^*_{\alpha}}\nonumber\\
		&&-\int_{\mathbb{R}^N}(I_{\alpha}*|u_{\tau}^+|^{p^*_{\alpha}})|u_{\tau}^+|^{p^*_{\alpha}-2}u_{\tau}^+ (tu_{\epsilon})dxdy.
	\end{eqnarray}
	Now, by 1. of \autoref{R_1_R_2_R_3} we have:
	\begin{equation*}
		\zeta_{\epsilon,t}^p  =  \frac{\left\| \hat{u}_{\epsilon,t}\right\|_p^p}{\tau^p} = \frac{\left\| u_{\tau}^++tu_{\epsilon}\right\|_p^p}{\tau^p} \leq 1+\frac{\left\| tu_{\epsilon}\right\|_p^p}{\tau^p} +ptK_{1,u_{\epsilon}}(t_0) \frac{\left\| u_{\epsilon}\right\|_p \left\| u_{\tau}^+\right\|_p^{p-1}}{\tau^p},
	\end{equation*}
	and hence
	\begin{eqnarray*}
		\zeta_{\epsilon,t}^{q(\gamma_{p,q}-1)}\left\| u_{\tau}^++tu_{\epsilon} \right\|_q^q & \geq &\left(1+\left(\frac{\left\| t u_{\epsilon}\right\|_p^p}{\tau^p}+pt K_{1,u_{\epsilon}}(t_0)\frac{\left\| u_{\epsilon}\right\|_p \left\| u_{\tau}^+\right\|_p^{p-1}}{\tau^p}\right)\right)^{\frac{q(\gamma_{p,q}-1)}{p}} \left\| u_{\tau}^++tu_{\epsilon} \right\|_q^q\\
		& \geq & \left(1+\frac{q(\gamma_{p,q}-1)}{p}\left(\frac{\left\| t u_{\epsilon}\right\|_p^p}{\tau^p}+pt K_{1,u_{\epsilon}}(t_0)\frac{\left\| u_{\epsilon}\right\|_p \left\| u_{\tau}^+\right\|_p^{p-1}}{\tau^p}\right)\right)\left\| u_{\tau}^++tu_{\epsilon} \right\|_q^q,
	\end{eqnarray*}
	since $(1+a)^{z}\geq 1+z a$ for all $z<0$ and $a\in \mathbb{R}$. Thus, \eqref{eq4.23} becomes:
	\begin{eqnarray*}
		E(\bar{u}_{\epsilon,t}) & \leq & \frac{\left\| \nabla u_{\tau}^+ \right\|_p^p}{p} +\frac{\left\| \nabla (tu_{\epsilon})\right\|_p^p}{p} +K_{2,u_{\epsilon}}(t_0)t \left\| \nabla u_{\tau}^+\right\|_p^{p-1} \left\| \nabla u_{\epsilon}\right\|_p +\frac{[u_{\tau}^+]_{s,p}^p}{p}+\frac{[tu_{\epsilon}]_{s,p}^p}{p} \nonumber\\
		&&+tK_{3,u_{\epsilon}}(t_0)[u_{\epsilon}]_{s,p} [u_{\tau}^+]_{s,p}^{p-1}-\frac{\mu}{q}\left\| u_{\tau^+}+tu_{\epsilon}\right\|_q^q+\frac{\mu(1-\gamma_{p,q})}{p\tau^p}t^p \left\| u_{\epsilon}\right\|_p^p \left\| u_{\tau}^++tu_{\epsilon}\right\|_q^q\\
		&&+K_{1,u_{\epsilon}}(t_0)\frac{\mu(1-\gamma_{p,q})t}{\tau^p}\left\| u_{\epsilon}\right\|_p \left\| u_{\tau}^+ \right\|_p^{p-1} \left\| u_{\tau}^++tu_{\epsilon}\right\|_q^q -\frac{A(tu_{\epsilon})}{2p^*_{\alpha}}-\frac{A(u_{\tau}^+)}{2p^*_{\alpha}}\nonumber\\
		&&-\int_{\mathbb{R}^N}(I_{\alpha}*|u_{\tau}^+|^{p^*_{\alpha}})|u_{\tau}^+|^{p^*_{\alpha}-2}u_{\tau}^+ (tu_{\epsilon})dxdy\\
		& \leq &  \frac{\left\| \nabla u_{\tau}^+ \right\|_p^p}{p} +\frac{\left\| \nabla (tu_{\epsilon})\right\|_p^p}{p} +K_{2,u_{\epsilon}}(t_0)t \left\| \nabla u_{\tau}^+\right\|_p^{p-1} \left\| \nabla u_{\epsilon}\right\|_p +\frac{[u_{\tau}^+]_{s,p}^p}{p}+\frac{[tu_{\epsilon}]_{s,p}^p}{p} \nonumber\\
		&&+tK_{3,u_{\epsilon}}(t_0)[u_{\epsilon}]_{s,p} [u_{\tau}^+]_{s,p}^{p-1}-\frac{\mu}{q}\left\| u_{\tau^+}\right\|_q^q -\frac{\mu}{q} \left\| t u_{\epsilon} \right\|_q^q-\mu\int_{\mathbb{R}^N}|u_{\tau}^+|^{q-2}u_{\tau}^+ (tu_{\epsilon})\\
		&&	+\frac{\mu(1-\gamma_{p,q})}{p\tau^p}t^p \left\| u_{\epsilon}\right\|_p^p \left\| u_{\tau}^++tu_{\epsilon}\right\|_q^q+\frac{\mu(1-\gamma_{p,q})t}{\tau^p}K_{1,u_{\epsilon}}(t_0)\left\| u_{\epsilon}\right\|_p \left\| u_{\tau}^+ \right\|_p^{p-1} \left\| u_{\tau}^++tu_{\epsilon}\right\|_q^q\\ 
		&& -\frac{A(tu_{\epsilon})}{2p^*_{\alpha}}-\frac{A(u_{\tau}^+)}{2p^*_{\alpha}}
		-\int_{\mathbb{R}^N}(I_{\alpha}*|u_{\tau}^+|^{p^*_{\alpha}})|u_{\tau}^+|^{p^*_{\alpha}-2}u_{\tau}^+ (tu_{\epsilon})dxdy,
	\end{eqnarray*}
	since $(a+b)^q\geq a^q+b^q+qa^{q-1}b$, for all $a,b\geq 0$ and $q>p\geq 2$. Therefore,
	\begin{eqnarray*}
		E(\bar{u}_{\epsilon,t}) &\leq & E(u_{\tau}^+) +E(tu_{\epsilon}) +tK_{2,u_{\epsilon}}(t_0) \left\| \nabla u_{\tau}^+\right\|_p^{p-1}\left\| \nabla u_{\epsilon} \right\|_p+tK_{3,u_{\epsilon}}(t_0)[u_{\epsilon}]_{s,p}[u_{\tau}^+]_{s,p}^{p-1}\nonumber\\ && -\mu\int_{\mathbb{R}^N}|u_{\tau}^+|^{q-2}u_{\tau}^+(tu_{\epsilon})\nonumber\\
		&& +\frac{\mu(1-\gamma_{p,q})}{p\tau^p}\left(t^p \left\| u_{\epsilon}\right\|_p^p+tpK_{1,u_{\epsilon}}(t_0)\left\| u_{\epsilon}\right\|_p \left\| u_{\tau}^+\right\|_p^{p-1}\right)\left\| u_{\tau}^++tu_{\epsilon} \right\|_q^q\nonumber\\
		&&-\int_{\mathbb{R}^N}(I_{\alpha}*|u_{\tau}^+|^{p^*_{\alpha}})|u_{\tau}^+|^{p^*_{\alpha}-2}u_{\tau}^+ (tu_{\epsilon})dxdy.
	\end{eqnarray*}
	Denoting
	\begin{eqnarray*}
		\mathcal{X} & := & -tK_{2,u_{\epsilon}}(t_0) \left\| \nabla u_{\tau}^+\right\|_p^{p-1}\left\| \nabla u_{\epsilon}\right\|_p-tK_{3,u_{\epsilon}}(t_0)[u_{\epsilon}]_{s,p}[u_{\tau}^+]_{s,p}^{p-1} +\mu\int_{\mathbb{R}^N}|u_{\tau}^+|^{q-2}u_{\tau}^+(tu_{\epsilon})\\
		&& +\int_{\mathbb{R}^N}(I_{\alpha}*|u_{\tau}^+|^{p^*_{\alpha}})|u_{\tau}^+|^{p^*_{\alpha}-2}u_{\tau}^+ (tu_{\epsilon})dxdy-\mu\frac{(1-\gamma_{p,q})}{\tau^p}tK_{1,u_{\epsilon}}(t_0)\left\| u_{\epsilon}\right\|_p\left\| u_{\tau}^+\right\|_p^{p-1},
	\end{eqnarray*}
	we get
	\begin{eqnarray}\label{eq4.24}
		E(\bar{u}_{\epsilon,t}) &\leq & E(u_{\tau}^+) +E(tu_{\epsilon}) -\mathcal{X}\nonumber\\
		&& +\frac{\mu(1-\gamma_{p,q})}{p\tau^p}\left(t^p \left\| u_{\epsilon}\right\|_p^p
		\right)\left\| u_{\tau}^++tu_{\epsilon} \right\|_q^q.
	\end{eqnarray}
	Now, since $u_{\tau}^+$ and $u_{\epsilon}$ are radially decreasing functions, by H$\ddot{\text{o}}$lder and Cauchy Schwarz inequalities, we have:
	\begin{eqnarray*}
		K_{2,u_{\epsilon}}(t_0)\left\| \nabla u_{\tau}^+ \right\|_p^{p-1} \left\| \nabla u_{\epsilon}\right\|_p & = & 	K_{2,u_{\epsilon}}(t_0) \left(\int_{\mathbb{R}^N}|\nabla u_{\tau}^+|^p\right)^{\frac{p-1}{p}} \left(\int_{\mathbb{R}^N}|\nabla u_{\epsilon}|^p\right)^{\frac{1}{p}}\\
		& \geq & 	K_{2,u_{\epsilon}}(t_0)\int_{\mathbb{R}^N}|\nabla u_{\tau}^+|^{p-1}|\nabla u_{\epsilon}| \\
		& \geq & 	K_{2,u_{\epsilon}}(t_0)\int_{\mathbb{R}^N}|\nabla u_{\tau}^+|^{p-2} \nabla u_{\tau}^+ \nabla u_{\epsilon}.
	\end{eqnarray*}	
	{Since $u_{\tau}^+$ and $u_{\epsilon}$ are radially decreasing functions, there exists $f_{\tau}$ and $f_{\epsilon}$ such that, they are non-increasing in $[0,\infty)$, with
		$$u_{\tau}^+(x) = f_{\tau}(|x|) \text{ and } u_{\epsilon}(x)=f_{\epsilon}(|x|).$$
		Thus, for any $x\in \mathbb{R}^N$
		\begin{eqnarray*}
			\nabla u_{\tau}^+(x) \nabla u_{\epsilon}(x) & = & f_{\tau}'(|x|) f_{\epsilon}'(|x|) \geq 0.
		\end{eqnarray*} 
	}
	Similarly, since $u_{\tau}^+$ and $u_{\epsilon}$ are non-negative functions, by H$\ddot{\text{o}}$lder inequality we have:
	\begin{eqnarray*}
		K_{1,u_{\epsilon}}(t_0) \int_{\mathbb{R}^N}|u_{\tau}^+|^{p-2}u_{\tau}^+u_{\epsilon} & \leq & K_{1,u_{\epsilon}}(t_0) \left(\int_{\mathbb{R}^N}|u_{\tau}^+|^p\right)^{\frac{p-1}{p}}\left(\int_{\mathbb{R}^N} |u_{\epsilon}|^p\right)^{\frac{1}{p}}.
	\end{eqnarray*}
	Thus,
	\begin{eqnarray*}
		K_{1,u_{\epsilon}}(t_0)\left\| u_{\tau}^+\right\|_p^{p-1}\left\| u_{\epsilon}\right\|_p & \geq & K_{1,u_{\epsilon}}(t_0)\int_{\mathbb{R}^N} | u_{\tau}^+|^{p-2}  u_{\tau}^+  u_{\epsilon}
	\end{eqnarray*}
	and
	\begin{eqnarray*}
		t[u_{\epsilon}]_{s,p}[u_{\tau}^+]_{s,p}^p & = & 		t\left(\int_{\mathbb{R}^N}\int_{\mathbb{R}^N}\frac{|u_{\epsilon}(x)-u_{\epsilon}(y)|^p}{|x-y|^{\left(\frac{N+sp}{p}\right)p}}\right)^{\frac{1}{p}}\left(\int_{\mathbb{R}^N}\int_{\mathbb{R}^N}\frac{|u_{\tau}^+(x)-u_{\tau}^+(y)|^{\left(\frac{p}{p-1}\right)(p-1)}}{|x-y|^{\left(N+sp\right)\left(\frac{p-1}{p}\right)\left(\frac{p}{p-1}\right)}}\right)^{\frac{p-1}{p}}\\
		& \geq & 		t\int_{\mathbb{R}^N}\int_{\mathbb{R}^N}\frac{|u_{\epsilon}(x)-u_{\epsilon}(y)||u_{\tau}^+(x)-u_{\tau}^+(y)|^{p-1}}{|x-y|^{\left(\frac{N+sp}{p}\right)+\left(\frac{(p-1)(N+sp)}{p}\right)}} \geq \ll u_{\tau}^+, tu_{\epsilon}\gg.
	\end{eqnarray*}
	Therefore, since $K_{2,u_{\epsilon}}(t_0)$ and $K_{3,u_{\epsilon}}(t_0)\geq 1$,
	\begin{eqnarray*}
		\mathcal{X} & \leq & -K_{2,u_{\epsilon}}(t_0)\int_{\mathbb{R}^N}|\nabla u_{\tau}^+|^{p-2} \nabla u_{\tau}^+ \nabla(t u_{\epsilon})-		K_{3,u_{\epsilon}}(t_0)\ll u_{\tau}^+, tu_{\epsilon}\gg+\mu\int_{\mathbb{R}^N}|u_{\tau}^+|^{q-2}u_{\tau}^+(tu_{\epsilon})\\
		&& +\int_{\mathbb{R}^N}(I_{\alpha}*|u_{\tau}^+|^{p^*_{\alpha}})|u_{\tau}^+|^{p^*_{\alpha}-2}u_{\tau}^+ (tu_{\epsilon})dxdy-\mu\frac{(1-\gamma_{p,q})}{\tau^p}tK_{1,u_{\epsilon}}(t_0)\int_{\mathbb{R}^N} | u_{\tau}^+|^{p-2}  u_{\tau}^+  u_{\epsilon}\\
		& \leq &-  E'(u_{\tau}^+)(tu_{\epsilon})-\mu\frac{(1-\gamma_{p,q})}{\tau^p}tK_{1,u_{\epsilon}}(t_0)\int_{\mathbb{R}^N} | u_{\tau}^+|^{p-2}  u_{\tau}^+  u_{\epsilon}\\
		& = & -\left(\lambda+\mu\frac{(1-\gamma_{p,q})}{\tau^p}tK_{1,u_{\epsilon}}(t_0) \right) \int_{\mathbb{R}^N}|u_{\tau}^+|^{p-2} u_{\tau}^+(tu_{\epsilon}) \\
		& \leq & t\left(|\lambda|+\mu\frac{(1-\gamma_{p,q})}{\tau^p}K_{1,u_{\epsilon}}(t_0)\right) \int_{\mathbb{R}^N} |u_{\tau}^+(x)|^{p-1}|u_{\epsilon}(x)|dx\\
		& = & t\left(|\lambda|+\mu\frac{(1-\gamma_{p,q})}{\tau^p}K_{1,u_{\epsilon}}(t_0)\right)\int_{B_{2r}(0)} |u_{\tau}^+(x)|^{p-1}|u_{\epsilon}(x)|dx.
	\end{eqnarray*}
	In addition defining $\bar{u}(x):= u(x+y)$ for some fixed $y\in \mathbb{R}^N$ such that $|x-y|>\left|\frac{y}{2}\right|>1$ for all $x\in B_{2r}(0)$, by radial lemma \cite[Proposition~1.1]{Sickel2014radial} we get:
	\begin{eqnarray*}
		\int_{B_{2r}(0)} |u_{\tau}^+(x)|^{p-1}|u_{\epsilon}(x)|dx & = & \int_{B_{2r}(0)} |\bar{u}(x-y)|^{p-1}|u_{\epsilon}(x)|dx\\
		& \leq & C\int_{B_{2r}(0)} \frac{|u_{\epsilon}(x)|}{|x-y|^{\frac{(N-1)(p-1)}{p}}}\leq C \int_{B_{2r}(0)}|u_{\epsilon}(x)|\left|\frac{2}{y}\right|^{\frac{(N-1)(p-1)}{p}}dx\\
		& \leq & C_{N,p}'\int_{B_{2r}(0)}\frac{\epsilon^{\frac{N-p}{p(p-1)}}dx}{(\epsilon^{\frac{p}{p-1}}+|x|^{\frac{p}{p-1}})^{\frac{N-p}{p}}}=\bar{C} \epsilon^{\frac{N-p}{p(p-1)}}.
	\end{eqnarray*}
	Thus $\mathcal{X}=(1+O(\left\| u_{\epsilon}\right\|_p^{p-1}))O(\epsilon^{\frac{N-p}{p(p-1)}})$
	since
	\begin{eqnarray*}
		K_{1,u_{\epsilon}} & \leq & \left(1+ \frac{t_0 \left\| u_{\epsilon}\right\|_p}{\left\| u_{\tau}\right\|_p}\right)^{p-1}\leq \tilde{C}\left(1+\left(\frac{t_0}{\left\| u_{\tau}^+\right\|_p}\right)^{p-1}\left\| u_{\epsilon}\right\|_p^{p-1}\right).
	\end{eqnarray*}
	Hence, \eqref{eq4.24} becomes
	\begin{eqnarray}\label{eq4.25}
		E(\bar{u}_{\epsilon,t}) &\leq & E(u_{\tau}^+) +E(tu_{\epsilon}) -(1+O(\left\| u_{\epsilon}\right\|_p^{p-1}))O\left(\epsilon^{\frac{N-p}{p(p-1)}}\right)\nonumber\\
		&& +\frac{\mu(1-\gamma_{p,q})}{p\tau^p}\left(t^p \left\| u_{\epsilon}\right\|_p^p
		\right)\left\| u_{\tau}^++tu_{\epsilon} \right\|_q^q
		\leq  E(u_{\tau}^+) +E(tu_{\epsilon}) 
	\end{eqnarray}
	since
	\begin{eqnarray*}
		\frac{\mu(1-\gamma_{p,q})}{p\tau^p}t^p \left\| u_{\epsilon}\right\|_p^p\left\| u_{\tau}^++tu_{\epsilon}\right\|_q^q & = & \left(\left\| u_{\tau}^+\right\|_q^q +O\left(\left\| u_{\epsilon}\right\|_q^q\right)+O\left(\left\| u_{\epsilon}\right\|_q\right)\right)\frac{\mu(1-\gamma_{p,q})}{p\tau^p}t^p \left\| u_{\epsilon}\right\|_p^p\\
		& = & \left(\left\| u_{\tau}^+\right\|_q^q +O\left(\left\| u_{\epsilon}\right\|_q^q\right)+O\left(\left\| u_{\epsilon}\right\|_q\right)\right)O\left(\left\| u_{\epsilon}\right\|_p^p\right).
	\end{eqnarray*}
	Thus,
	\begin{eqnarray*}
		&&\frac{\mu(1-\gamma_{p,q})}{p\tau^p}t^p \left\| u_{\epsilon}\right\|_p^p\left\| u_{\tau}^++tu_{\epsilon}\right\|_q^q -(1+O(\left\| u_{\epsilon}\right\|_p^{p-1}))O\left(\epsilon^{\frac{N-p}{p(p-1)}}\right)\\
		&&= O(\left\| u_{\epsilon}\right\|_p^p)-(1+O(\left\| u_{\epsilon}\right\|_p^{p-1}))O\left(\epsilon^{\frac{N-p}{p(p-1)}}\right)	\\
		&& =\left\{ 
		\begin{array}{ll}
			O(\epsilon^p) -(1+O(\epsilon^{p-1}))O(\epsilon^{\frac{N-p}{p(p-1)}}) & \text{ for } p^2<N,\\
			\epsilon^p \ln(1/\epsilon) +O(\epsilon^p) -(1+\epsilon^{p-1}(\ln(1/\epsilon))^{\frac{p-1}{p}}+O(\epsilon^{p-1}))O(\epsilon) & \text{ for } p^2=N,\\
			O(\epsilon^{\frac{N-p}{p-1}}) -(1+O(\epsilon^{\frac{N-p}{p}}))O(\epsilon^{\frac{N-p}{p(p-1)}}) & \text{ for } p^2>N,
		\end{array}
		\right.\\
		&& < 0 \text{ whenever, either } p^2\geq N \text{ or } p^2< N < \frac{p^3+p}{2},
	\end{eqnarray*}
	since $p>1$. Note that for $p^2<N<\frac{p^3+p}{2}$, we get $p^3-p^2+p > p^3-N+p >2N-N=N$, and hence $p>\frac{N-p}{p(p-1)}$ and for $p^2\geq N$, we will directly get the result together with $p>1$.
	Therefore, { for $0<\frac{1}{t_0}\leq t \leq t_0$ and $u_{\epsilon}\neq 0$ we have:}  
	\begin{eqnarray*}
		E(\bar{u}_{\epsilon,t}) & \leq & m_{\tau}+E(tu_{\epsilon})\\
		& = & {m_{\tau}+\frac{t^pT(u_{\epsilon})}{p}-\mu \frac{t^q\left\| u_{\epsilon}\right\|_q^q}{q}-\frac{t^{2p^*_{\alpha}}A(u)}{2p^*_{\alpha}}}\\
		& {<}& {m_{\tau}+\frac{t^pT(u_{\epsilon})}{p}-\frac{t^{2p^*_{\alpha}}A(u)}{2p^*_{\alpha}}}\\
		& = & m_{\tau} +f_{u_{\epsilon}}(t)
		\text{ where }f_{u}(t):=\frac{t^pT(u)^p}{p}-\frac{t^{2p^*_{\alpha}}A(u)}{2p^*_{\alpha}}.
	\end{eqnarray*}
	{ Now,} since $f_u$ has global maxima at $t_u=\left(\frac{T(u)^p}{A(u)}\right)^{\frac{1}{2p^*_{\alpha}-p}}$, by \eqref{grad_u_epsilon}, \eqref{[U_epsilon]} and \eqref{A(u_epsilon)}, we get also:
	\begin{eqnarray}\label{eq_5.27}
		E(\bar{u}_{\epsilon,t}) & { <} & m_{\tau} +f_{u_{\epsilon}}(t_{u_{\epsilon}}) =m_{\tau}+\left(\frac{2p^*_{\alpha}-p}{2pp^*_{\alpha}}\right)\left(\frac{T(u_{\epsilon})^p}{A(u_{\epsilon})^{\frac{p}{2p^*_{\alpha}}}}\right)^{\frac{2p^*_{\alpha}}{2p^*_{\alpha}-p}}\nonumber\\
		& \leq & m_{\tau} +  \left(\frac{2p^*_{\alpha}-p}{2pp^*_{\alpha}}\right)\left(\frac{S^{\frac{N}{p}}+O(\epsilon^{\frac{N-p}{p-1}})+O(\epsilon^{m_{N,p,s}})}{\left((A_{\alpha}C_{N,\alpha,p})^{\frac{N}{2}}S_{\alpha}^{\frac{N+\alpha}{p}}+O(\epsilon^{\frac{N}{p-1}})\right)^{\frac{p}{2p^*_{\alpha}}}}\right)^{\frac{2p^*_{\alpha}}{2p^*_{\alpha}-p}}\nonumber\\
		& \leq & m_{\tau}+\left(\frac{2p^*_{\alpha}-p}{2pp^*_{\alpha}}\right)\left(\frac{S}{(A_{\alpha}C_{N,\alpha,p})^{\frac{p}{2p^*_{\alpha}}}}\right)^{\frac{2p^*_{\alpha}}{2p^*_{\alpha}-p}} \nonumber\\
		& \leq & m_{\tau}+\left(\frac{2p^*_{\alpha}-p}{2pp^*_{\alpha}}\right)\mathbb{S}^{\frac{2p^*_{\alpha}}{2p^*_{\alpha}-p}},
	\end{eqnarray}
	for $\epsilon$ small and uniformly in $t\in [1/t_0,t_0]$. The second last inequality above holds, since 
	\begin{eqnarray*}
		m_{N,p,s} & = & \min\left\{p(1-s),\frac{N-p}{p-1}\right\}=\left\{
		\begin{array}{cc}
			\min\left\{p(1-s),\frac{N-p}{p-1}\right\} & \text{ for } p^2>N\\
			p(1-s)& \text{ for } p^2\leq N
		\end{array}\right.\\
		& \geq &  \frac{N-p}{p(p-1)} \text{ for } \left\{\begin{array}{l}
			p^2>N \text{ if, either } \frac{N-p}{p(p-1)}<p(1-s)<\frac{N-p}{p-1} \text{ or } p(1-s) \geq \frac{N-p}{p-1},\\
			p^2\leq N \text{ if } p^2\leq N \leq \min\{\frac{p^3+p}{2}, p\left(p(1-s)(p-1)+1\right)\} .
		\end{array}
		\right.
	\end{eqnarray*}
	Thus, if \eqref{Conditions_p,N,s} is satisfied, then \eqref{eq_5.27} holds. Therefore, by \eqref{4.11} we are done. 
\end{proof}
\noindent Now, if $u\in \mathcal{M}_{\tau}^{\pm}$ for $0<\tau<\min\{\tau_1,\tau_2\}$, then $v_{\beta}:=\frac{\beta}{\tau}u \in S(\beta)$, for $\beta>0$. Now, by \autoref{Lemma 2.3}, for every $0<\beta<\min\{\tau_1,\tau_2\}$ we can find $\bar{t}_{\pm}(\beta)\in \mathbb{R}$, such that 
$$\bar{t}_{\pm}(\beta)\star v_{\beta}\in \mathcal{M}_{\beta}^{\pm} \Rightarrow t_{\pm}(\beta)\circledast v_{\beta}\in \mathcal{M}_{\beta}^{\pm}\text{ where } t_{\pm}(\beta)=e^{\bar{t}_{\pm}(\beta)}>0.$$
Here $t_{\pm}(\tau)=1$, since $v_{\tau}=u\in \mathcal{M}_{\tau}^{\pm}$. Moreover, we have the following results for $t_{\pm}$:
\begin{lemma}\label{Lemma5.3}
	For $N\geq 3$, $p<q<p+\frac{p^2}{N}$ and $0<\tau<\min\{\tau_1,\tau_2\}$, $t_{\pm}$ is differentiable at $\tau$ with
	\begin{equation}\label{t_tau'}
		t'_{\pm}(\tau)= \frac{\mu q \gamma_{p,q}\left\| u \right\|_{q}^q+2p^*_{\alpha}A(u)-sp[u]_{s,p}^p-p \left\| \nabla u \right\|_p^p}{\tau\left(p\left\| \nabla u \right\|_p^p+ps^2[u]_{s,p}^p-\mu q \gamma_{p,q}^2 \left\| u \right\|_q^q-2p^*_{\alpha}A(u)\right)}.
	\end{equation}
	Moreover, for sufficiently large $\mu>0$, we have
	$$E(t_{\pm}(\beta)\circledast v_{\beta})<E(u) \text{ whenever } \tau<\beta<\min\{\tau_1,\tau_2\}.$$
\end{lemma}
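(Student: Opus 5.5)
The plan is to apply the implicit function theorem to the scalar equation defining $t_{\pm}(\beta)$, and then to study the energy along the resulting curve. Write $v_\beta=\frac{\beta}{\tau}u$ and
$$F(\beta,t):=M(t\circledast v_\beta)=t^p\Big(\tfrac{\beta}{\tau}\Big)^p\|\nabla u\|_p^p+s\,t^{ps}\Big(\tfrac{\beta}{\tau}\Big)^p[u]_{s,p}^p-\mu\gamma_{p,q}t^{q\gamma_{p,q}}\Big(\tfrac{\beta}{\tau}\Big)^q\|u\|_q^q-t^{2p^*_{\alpha}}\Big(\tfrac{\beta}{\tau}\Big)^{2p^*_{\alpha}}A(u).$$
This map is $C^1$ on $(0,\infty)^2$ and satisfies $F(\tau,1)=M(u)=0$. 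Evaluated at $(\tau,1)$,
$$\partial_tF=p\|\nabla u\|_p^p+ps^2[u]_{s,p}^p-\mu q\gamma_{p,q}^2\|u\|_q^q-2p^*_{\alpha}A(u),$$
once the identity $M(u)=0$ is used to rewrite the coefficients. This quantity is nonzero because $u\in\mathcal M_\tau^{\pm}$ and $\mathcal M_\tau^0=\emptyset$ by \autoref{Lemma 2.3}. Similarly,
$$\tau\,\partial_\beta F(\tau,1)=p\|\nabla u\|_p^p+sp[u]_{s,p}^p-\mu q\gamma_{p,q}\|u\|_q^q-2p^*_{\alpha}A(u).$$
The implicit function theorem then gives a $C^1$ branch $\beta\mapsto t(\beta)$ with $t(\tau)=1$ and $t'(\tau)=-\partial_\beta F/\partial_tF$, which is exactly \eqref{t_tau'}. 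Since $\psi_{v_\beta}$ has only the two critical points $a,c$ by \autoref{Lemma 2.3}, and the sign of $\partial_tF$ is locally preserved, this branch coincides with $t_{\pm}(\beta)$ near $\tau$.

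For the energy inequality, set $\Psi(\beta):=E(t_{\pm}(\beta)\circledast v_\beta)$ and differentiate. Because $t_{\pm}(\beta)$ is a critical point of the fiber map, $\partial_tE(t\circledast v_\beta)=0$ at $t=t_{\pm}(\beta)$, so the term containing $t_{\pm}'$ drops out. This leaves
$$\Psi'(\beta)=\frac{1}{\beta}\Big(T(w)^p-\mu\|w\|_q^q-A(w)\Big),\qquad w=t_{\pm}(\beta)\circledast v_\beta\in\mathcal M_\beta^{\pm}.$$
Using $M(w)=0$ to eliminate $\|\nabla w\|_p^p-A(w)$, this becomes
$$\beta\Psi'(\beta)=(1-s)[w]_{s,p}^p-\mu(1-\gamma_{p,q})\|w\|_q^q.$$
This is exactly $\lambda\beta^p$ in the notation of Claim 1 in the proof of \autoref{Theorem 1}.

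I then need this quantity to be negative for $\mu$ large. The natural route is to bound $[w]_{s,p}^p$ from above, independently of $\mu$, and $\|w\|_q^q$ from below. For $w\in\mathcal M^+_\beta$ we have $T(w)<R_0$ by \autoref{Lemma 2.5}, and I would try to extract a lower bound on $\|w\|_q^q$ from $E(w)<0$, which gives $\frac{\mu}{q}\|w\|_q^q>\frac{T(w)^p}{p}-\frac{A(w)}{2p^*_\alpha}$. For $\mathcal M^-$ I would argue from $M(w)=0$. Once $\Psi'<0$ holds on $[\tau,\beta]$, integrating gives $\Psi(\beta)<\Psi(\tau)=E(u)$. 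To cover the whole interval up to $\min\{\tau_1,\tau_2\}$ rather than only a neighbourhood of $\tau$, I would repeat the local argument at every intermediate $\beta$, each time with $v_\beta$ in place of $u$.

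The main obstacle is this sign step. Showing $(1-s)[w]_{s,p}^p<\mu(1-\gamma_{p,q})\|w\|_q^q$ uniformly requires the largeness of $\mu$ to beat the $\mu$-dependence of $R_0$ and of $w$ itself. My first attempt would combine the Gagliardo–Nirenberg inequality \eqref{G_N_inequality} with $M(w)=0$ to show that $[w]_{s,p}^p\le T(w)^p$ grows more slowly in $\mu$ than $\mu\|w\|_q^q$. This mirrors, and relies on, the same hypothesis already invoked in Claim 1 of \autoref{Theorem 1}.
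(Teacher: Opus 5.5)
Your implicit-function step is the paper's. It uses the same map $F=\Phi$ and the same partial derivatives at $(\tau,1)$. Your coefficient $ps^2$ is the correct one; the paper's $sp^2$ in $\partial_t\Phi$ is a typo, and $M(u)=0$ is not needed there. You also justify that the implicit branch is $t_\pm$, which the paper omits. (Both you and the paper need the standing assumption $q<p+\frac{sp^2}{N}$ to get $\mathcal{M}^0_\tau=\emptyset$ from \autoref{Lemma 2.3}; the $q<p+\frac{p^2}{N}$ in the statement is not enough.)

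For the energy you take a cleaner route than the paper's second-order bookkeeping in $\beta-\tau$. Since $\partial_t E(t\circledast v_\beta)=t^{-1}M(t\circledast v_\beta)$ vanishes at $t=t_\pm(\beta)$, you get $\beta\Psi'(\beta)=(1-s)[w]_{s,p}^p-\mu(1-\gamma_{p,q})\|w\|_q^q$ at every point of the curve. At $\beta=\tau$ this is exactly the paper's first-order coefficient. Having it along the whole curve is what you need to pass from the derivative at $\tau$ to monotonicity on $(\tau,\min\{\tau_1,\tau_2\})$, a step the paper only asserts.

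The genuine gap is the sign of this quantity. You leave it open, and the paper covers it only by ``for $\mu$ large''. That phrase cannot do the job, because $\mathcal{M}^\pm_\tau$, $R_0$ and $\tau_1,\tau_2$ all move with $\mu$ (the latter scale like $\mu^{-1/(q(1-\gamma_{p,q}))}$). What is needed is the inequality uniformly on $\mathcal{M}^\pm_\beta$ for fixed $\mu$.

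On $\mathcal{M}^+_\beta$ your idea of using $E(w)<0$ works, for every $\mu>0$. Eliminating $A(w)$ between $E(w)<0$ and $M(w)=0$ gives
\[
\mu\|w\|_q^q>\frac{q\big((2p^*_{\alpha}-p)\|\nabla w\|_p^p+(2p^*_{\alpha}-sp)[w]_{s,p}^p\big)}{p\,(2p^*_{\alpha}-q\gamma_{p,q})}.
\]
Moreover $q(1-\gamma_{p,q})(2p^*_{\alpha}-sp)>p(1-s)(2p^*_{\alpha}-q\gamma_{p,q})$: both sides are affine in $x=q\gamma_{p,q}\in(0,sp)$ (since $q=p+\frac{px}{N}$), and the inequality holds at $x=0$ and $x=sp$. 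Together these give $(1-s)[w]_{s,p}^p<\mu(1-\gamma_{p,q})\|w\|_q^q$ on all of $\mathcal{M}^+_\beta$.

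On $\mathcal{M}^-_\beta$, however, the inequality can fail, so no argument from $M(w)=0$, Gagliardo--Nirenberg, or a large $\mu$ can close it. Take the cut-off bubble $u_\epsilon$ of \autoref{Lemma 4.1} and add a bump spread out enough (depending on $\epsilon$) to carry the mass. Normalize to $S(\beta)$ and project onto $\mathcal{M}^-_\beta$.
\begin{itemize}
\item The projection parameter stays bounded as $\epsilon\to0$.
\item $[u_\epsilon]_{s,p}^p\geq c\,\epsilon^{p(1-s)}$ (restrict to $B_r\times B_r$ and rescale).
\item For $q>\frac{N(p-1)}{N-p}$ (e.g.\ whenever $p^2<N$), $\|u_\epsilon\|_q^q=O(\epsilon^{q(1-\gamma_{p,q})})$, with $q(1-\gamma_{p,q})>p(1-s)$ because $q>p$ and $q\gamma_{p,q}<sp$.
\end{itemize}
Hence $[w]_{s,p}^p/\|w\|_q^q$ is unbounded on $\mathcal{M}^-_\beta$ for each fixed $\mu$, and at such $w$ your own formula gives $\Psi'(\beta)>0$. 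So in that regime the ``$-$'' half cannot hold for every $u\in\mathcal{M}^-_\tau$.

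What \autoref{Theorem 2} actually uses is $m^-_{r_0}\geq m^-_\tau$. That would have to be proved for near-minimizers or by a different comparison, not pointwise on $\mathcal{M}^-_\tau$.
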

\begin{proof}
	Since $t_{\pm}(\beta)\circledast v_{\beta}\in \mathcal{M}_{\tau}^{\pm}$, we have
	\begin{eqnarray}\label{5.27}
		0 & = & M(t_{\pm}(\beta)\circledast v_{\beta})\nonumber\\
		& = & (t_{\pm}(\beta))^p\left\| \nabla v_{\beta} \right\|_p^p +s(t_{\pm}(\beta))^{sp}[v_{\beta}]_{s,p}^p -\mu \gamma_{p,q}(t_{\pm}(\beta))^{q\gamma_{p,q}}\left\| v_{\beta} \right\|_q^q -(t_{\pm}(\beta))^{2p^*_{\alpha}}A(v_{\beta})\nonumber\\
		& = & \left(t_{\pm}(\beta)\frac{\beta}{\tau}\right)^p\left\| \nabla u \right\|_p^p +s\left(\frac{(t_{\pm}(\beta)^s\beta)}{\tau}\right)^p[u]_{s,p}^p-\mu \gamma_{p,q}\left((t_{\pm}(\beta))^{\gamma_{p,q}}\frac{\beta}{\tau}\right)^q\left\| u \right\|_q^q\nonumber\\
		&& -\left(\frac{t_{\pm}(\beta)\beta}{\tau}\right)^{2p^*_{\alpha}}A(u).
	\end{eqnarray}
	Now, define the map $\Phi:(0,\min\{\tau_1,\tau_2\})\times (0,\infty)\rightarrow \mathbb{R}$ as
	$$\Phi(\beta,t):=\left(\frac{\beta t}{\tau}\right)^p\left\| \nabla u \right\|_p^p +s\left(\frac{\beta t^s}{\tau}\right)^p[u]_{s,p}^p-\mu\gamma_{p,q}\left(\frac{\beta t^{\gamma_{p,q}}}{\tau}\right)^{q}\left\| u \right\|_q^q-\left(\frac{\beta t}{\tau}\right)^{2p^*_{\alpha}}A(u).$$
	Clearly, by \eqref{5.27}, $\Phi(\beta, t_{\pm}(\beta))=0$ for all $0<\beta<\min\{\tau_1,\tau_2\}$. In addition,
	\begin{equation*}
		\frac{\partial \Phi}{\partial t}(\tau,1) = p \left\| \nabla u \right\|_p^p+sp^2 [u]_{s,p}^p -\mu q\gamma_{p,q}^2 \left\| u \right\|_q^q-2p^*_{\alpha}A(u) \neq 0,
	\end{equation*}
	since $\mathcal{M}_{\tau}^0$ is empty. Thus, by the inverse function theorem, $\beta \mapsto t_{\pm}(\beta)$ is differentiable at $\tau$ with
	\begin{equation*}
		t'_{\pm}(\tau)  =  -\frac{\frac{\partial \Phi}{\partial \beta}(\tau,1)}{\frac{\partial \Phi}{\partial t}(\tau,1)}
		=  \frac{\mu q \gamma_{p,q}\left\| u \right\|_{q}^q+2p^*_{\alpha}A(u)-sp[u]_{s,p}^p-p \left\| \nabla u \right\|_p^p}{\tau\left(p\left\| \nabla u \right\|_p^p+ps^2[u]_{s,p}^p-\mu q \gamma_{p,q}^2 \left\| u \right\|_q^q-2p^*_{\alpha}A(u)\right)}.
	\end{equation*}
	Moreover,
	\begin{equation}\label{5.28}
		1+\tau t_{\pm}'(\tau) = \frac{sp(s-1)[u]_{s,p}^p +\mu q \gamma_{p,q}(1-\gamma_{p,q})\left\| u \right\|_q^q}{p \left\| \nabla u \right\|_p^p +s^2p [u]_{s,p}^p- \mu q \gamma_{p,q}^2\left\| u \right\|_q^q-2p^*_{\alpha}A(u)}.
	\end{equation}
	Now, since $M(t_{\pm}(\beta)\circledast v_{\beta})=0$, we have:
	\begin{eqnarray}\label{5.29}
		E(t_{\pm}(\beta)\circledast v_{\beta}) & = & \frac{\left\| \nabla (t_{\pm}(\beta)\circledast v_{\beta})\right\|_p^p}{p}+\frac{[t_{\pm}(\beta)\circledast v_{\beta}]_{s,p}^p}{p}-\frac{A(t_{\pm}(\beta)\circledast v_{\beta})}{2p^*_{\alpha}}\nonumber\\
		&& -\frac{1}{q\gamma_{p,q}}\left(\left\| \nabla (t_{\pm}(\beta)\circledast v_{\beta})\right\|_p^p+s[t_{\pm}(\beta)\circledast v_{\beta}]_{s,p}^p-A(t_{\pm}(\beta)\circledast v_{\beta})\right)	\nonumber\\
		& = & \left(\frac{1}{p}-\frac{1}{q\gamma_{p,q}}\right)\left\| \nabla (t_{\pm}(\beta)\circledast v_{\beta}) \right\|_p^p+\left(\frac{1}{p}-\frac{s}{q\gamma_{p,q}}\right)[t_{\pm}(\beta)\circledast v_{\beta}]_{s,p}^p\nonumber\\
		&& +\left(\frac{1}{q\gamma_{p,q}}-\frac{1}{2p^*_{\alpha}}\right)A(t_{\pm}(\beta)\circledast v_{\beta})\nonumber\\
		& = & \left(\frac{1}{p}-\frac{1}{q\gamma_{p,q}}\right)\left(\frac{t_{\pm}(\beta)\beta}{\tau}\right)^p\left\| \nabla u \right\|_p^p+\left(\frac{1}{p}-\frac{s}{q\gamma_{p,q}}\right)\left(\frac{(t_{\pm}(\beta))^s\beta}{\tau}\right)^p[u]_{s,p}^p\nonumber\\
		&& +\left(\frac{1}{q\gamma_{p,q}}-\frac{1}{2p^*_{\alpha}}\right)\left(\frac{t_{\pm}(\beta)\beta}{\tau}\right)^{2p^*_{\alpha}}A(u).
	\end{eqnarray}
	Here, since $t_{\pm}(\tau)=1$,
	\begin{eqnarray*}
		\left(\frac{t_{\pm}(\beta)\beta}{\tau}\right)^p & = & \left(1+\frac{t_{\pm}(\beta)\beta}{\tau} -1\right)^p=\left(1+\frac{t_{\pm}(\beta)\beta}{\tau}-t_{\pm}(\beta)\right)^p\\
		& = & \left(1+(\beta-\tau)\left(\frac{t_{\pm}(\beta)\beta-\tau t_{\pm}(\tau)}{\tau(\beta-\tau)}\right)\right)^p\\
		& = & 1+ \frac{p(\beta -\tau)}{\tau}\left(\frac{\beta t_{\pm}(\beta)-\tau t_{\pm}(\tau)}{(\beta-\tau)}\right)+\sum_{i=2}^{\infty}\binom{n}{i}\left(\frac{\beta t_{\pm}(\beta)-\tau t_{\pm}(\tau)}{\tau(\beta-\tau)}\right)^i(\beta -\tau)^{i}\\
		& = & 1+\frac{p(\beta-\tau)}{\tau}\left(\frac{d}{d\beta}(\beta t_{\pm}(\beta))\right)_{\beta=\tau} +O((\beta-\tau)^2)\\
		& = & 1+p(\beta-\tau)\left(\frac{1+\tau t_{\pm}'(\tau)}{\tau}\right)+O((\beta-\tau)^2).
	\end{eqnarray*}
	Similarly, 
	\begin{eqnarray*}
		\left(\frac{(t_{\pm}(\beta))^s\beta}{\tau}\right)^p & = & 1+\frac{p(\beta-\tau)}{\tau}\left(\frac{d}{d\beta}(\beta t^s_{\pm}(\beta))\right)_{\beta=\tau} +O((\beta-\tau)^2)\\
		& = & 1+p(\beta-\tau)\left(\frac{1+s\tau t_{\pm}'(\tau)}{\tau}\right)+O((\beta-\tau)^2)
	\end{eqnarray*}
	and 
	\begin{eqnarray*}
		\left(\frac{t_{\pm}(\beta)\beta}{\tau}\right)^{2p^*_{\alpha}} & = & 1+2p^*_{\alpha}(\beta-\tau)\left(\frac{1+\tau t_{\pm}'(\tau)}{\tau}\right)+O((\beta-\tau)^2).
	\end{eqnarray*}
	Thus, \eqref{5.29} becomes
	\begin{eqnarray*}
		E(t_{\pm}(\beta)\circledast v_{\beta}) & = & \left(\frac{1}{p}-\frac{1}{q\gamma_{p,q}}\right)\left\| \nabla u \right\|_p^p +\left(\frac{1}{p}-\frac{s}{q\gamma_{p,q}}\right)[u]_{s,p}^p +\left(\frac{1}{q\gamma_{p,q}}-\frac{1}{2p^*_{\alpha}}\right)A(u)\\
		&& +p(\beta-\tau)\left(\frac{1}{p}-\frac{1}{q\gamma_{p,q}}\right)\left(\frac{1+\tau t'_{\pm}(\tau)}{\tau}\right)\left\| \nabla u \right\|_p^p\\
		&& +p(\beta-\tau)\left(\frac{1}{p}-\frac{s}{q\gamma_{p,q}}\right)\left(\frac{1+\tau st'_{\pm}(\tau)}{\tau}\right)[u]_{s,p}^p\\
		&& + 2p^*_{\alpha}(\beta-\tau)\left(\frac{1}{q\gamma_{p,q}}-\frac{1}{2p^*_{\alpha}}\right)\left(\frac{1+\tau t'_{\pm}(\tau)}{\tau}\right)A(u) +O((\beta-\tau)^2) \\
		& = & \left(\frac{\left\| \nabla u \right\|_p}{p}+\frac{[u]_{s,p}^p}{p}-\frac{A(u)}{2p^*_{\alpha}}\right)-\left(\frac{A(u)-s[u]_{s,p}^p-\left\| \nabla u \right\|_p^p}{q\gamma_{p,q}}\right)+O((\beta-\tau)^2)\\
		&& +\frac{p(\beta-\tau)}{\tau}\left(\left(\frac{1}{p}-\frac{1}{q\gamma_{p,q}}\right)\left(1+\tau t'_{\pm}(\tau)\right)\left\| \nabla u \right\|_p^p\right.\\
		&& +\left(\frac{1}{p}-\frac{s}{q\gamma_{p,q}}\right)\left(1+\tau st'_{\pm}(\tau)\right)[u]_{s,p}^p\\
		&&\left. -\left(\frac{1}{p}-\frac{2p^*_{\alpha}}{pq\gamma_{p,q}}\right)\left(1+\tau t_{\pm}'(\tau)\right)A(u)\right)\\
		& = & E(u) +O((\beta-\tau)^2)+\frac{p(\beta-\tau)}{\tau}\left( \left(\frac{1}{p}+\frac{\tau t_{\pm}'(\tau)}{p}\right)\left(\left\| \nabla u \right\|_p^p+s[u]_{s,p}^p-A(u)\right)\right.\\
		&& -\frac{1}{q\gamma_{p,q}}\left( \left\| \nabla u \right\|_p^p+s[u]_{s,p}^p -\frac{2p^*_{\alpha}}{p}A(u)\right) +\frac{(1-s)}{p}[u]_{s,p}^p\\
		&&\left. -\frac{\tau t_{\pm}'(\tau)}{q\gamma_{p,q}}\left( \left\| \nabla u \right\|_p^p +s^2[u]_{s,p}^p -\frac{2p^*_{\alpha}}{p}A(u)\right)\right)\\
		& = & E(u) +\frac{p(\beta-\tau)}{\tau}\left( \frac{(1+\tau t'_{\pm}(\tau))}{p}\left(\mu \gamma_{p,q} \left\| u \right\|_q^q\right)+\frac{(1-s)}{p}\right.\\
		&& -\frac{1}{q\gamma_{p,q}}\left(\mu \gamma_{p,q} \left\| u \right\|_q^q+A(u)-\frac{2p^*_{\alpha}}{p}A(u)\right)\\
		&&\left. -\frac{\tau t'_{\pm}(\tau)}{q\gamma_{p,q}}\left( (s^2-s)[u]_{s,p}^p+\mu \gamma_{p,q} \left\| u \right\|_q^q +\left(1-\frac{2p^*_{\alpha}}{p}\right)A(u)\right)\right)+O((\beta-\tau)^2)\\
		& = & E(u) +\frac{p(\beta-\tau)}{\tau}\left(\mu \gamma_{p,q} \left\| u \right\|_q^q\left(\frac{1+\tau t_{\pm}'(\tau)}{p}-\frac{1+\tau t_{\pm}'(\tau)}{p}\right)\right.\\
		&& +A(u)\left(-\frac{1}{q\gamma_{p,q}}+\frac{2p^*_{\alpha}}{pq\gamma_{p,q}}-\frac{\tau t'_{\pm}(\tau)}{q\gamma_{p,q}}+\frac{2p^*_{\alpha}\tau t'_{\pm}(\tau)}{pq\gamma_{p,q}}\right)\\
		&& \left.+[u]_{s,p}^p\left(\frac{(1-s)}{p}+\frac{s(1-s)\tau t'_{\pm}(\tau)}{q\gamma_{p,q}}\right)\right) +O((\beta-\tau)^2)\\
		& = & E(u) +\frac{(1-s)(q\gamma_{p,q}-sp)(\beta-\tau)}{q\gamma_{p,q}\tau}[u]_{s,p}^p +O((\beta-\tau)^2)\\
		&& \frac{(\beta-\tau)(1+\tau t'_{\pm}(\tau))}{\tau q \gamma_{p,q}}\left(\mu q \gamma_{p,q}^2\left\| u \right\|_q^q+2p^*_{\alpha}A(u)-ps^2[u]_{s,p}^p\right.\\
		&& \left. p\left(\mu \gamma_{p,q}\left\| u \right\|_q^q+A(u)-s[u]_{s,p}^p\right)\right),
	\end{eqnarray*}
	since $M(u)=0$. Hence by \eqref{5.28}, we get
	\begin{eqnarray*}
		E(t_{\pm}(\beta)\circledast v_{\beta}) & = & E(u)- \frac{\mu (\beta-\tau)(1-\gamma_{p,q})}{\tau}\left\| u \right\|_q^q+\frac{(\beta-\tau)(1-s)}{\tau}[u]_{s,p}^p +O((\beta-\tau)^2).
	\end{eqnarray*}
	Then, for $\mu>0$, large enough we have:
	$$\frac{\partial E}{\partial\beta}(t_{\pm}(\beta)\circledast v_{\beta})_{\beta=\tau}= -\frac{\mu(1-\gamma_{p,q})}{\tau}\left\| u \right\|_q^q+\frac{(1-s)}{\tau}[u]_{s,p}^p<0, \text{ for all }\tau<\min\{\tau_1,\tau_2\}$$
	that is, $E(t_{\pm}(\beta)\circledast v_{\beta})$ is strictly decreasing corresponding to $\beta$. Therefore, for all $\tau<\beta <\min\{\tau_1,\tau_2\}$
	$$E(t_{\pm}(\beta)\circledast v_{\beta}) < E(t_{\pm}(\tau)\circledast v_{\tau})=E(u).$$
\end{proof}
\noindent Denoting $\mathcal{M}_{r,\tau}^-:=\mathcal{M}_{\tau}^-\cap W^{1,p}_r(\mathbb{R}^N)$, we get $m_{r,\tau}^-:=\displaystyle \inf_{u\in \mathcal{M}_{r,\tau}^-}E(u)=\inf_{u\in \mathcal{M}_{\tau}^-}E(u)=m_{\tau}^-$, by symmetrization and the fact that $\mathcal{M}_{r,\tau}^-\subset\mathcal{M}_{\tau}^-$. Now, let us prove our final result:
	
	\begin{myproof}{Theorem}{\ref{Theorem 2}}
		Let $\{\bar{u}_n\}_{n\in \mathbb{N}}$ be the minimizing sequence for $E$ on $\mathcal{M}_{r,\tau}^-$, then by Ekeland variational principle, \cite[Theorem~1.1]{Ghoussoub}, we can find a sequence $\{u_n\}_{n\in \mathbb{N}}\subset \mathcal{M}_{r,\tau}^-$ such that
		\begin{equation}\label{4.20}
			\left\{
			\begin{array}{cl}
				\left\| \bar{u}_n-u_n\right\|_{W^{1,p}(\mathbb{R}^N)}\rightarrow 0 & \text{ as } n\rightarrow\infty,\\
				E(u_n) \rightarrow m_{r,\tau}^- & \text{ as } n\rightarrow \infty,\\
				M(u_n)\rightarrow 0 & \text{ as } n\rightarrow \infty,\\
				E'|_{\mathcal{M}_{r,\tau}^-}(u_n)\rightarrow 0 & \text{ as } n\rightarrow \infty.
			\end{array}
			\right.
		\end{equation}
		Now, by \eqref{4.20} we have
		\begin{eqnarray}\label{4.21}
			m_{r,\tau}^- & = & \lim_{n\rightarrow \infty}E(u_n)=\lim_{n\rightarrow \infty}\left(E(u_n)-\frac{M(u_n)}{p}\right)\nonumber\\
			& = & \lim_{n\rightarrow \infty}\left(\frac{\mu}{q}\left(\frac{q\gamma_{p,q}}{p}-1\right)\left\| u_n \right\|_q^q+\frac{(1-s)}{p}[u_n]_{s,p}^p+\left(\frac{2p^*_{\alpha}-p}{2pp^*_{\alpha}}\right)A(u_n)\right).
		\end{eqnarray}
		Now, since $E(u_n)\leq m_{r,\tau}^-+1$, for large $n\in \mathbb{N}$, by Gagliardo-Nirenberg inequality \eqref{G_N_inequality}
		\begin{eqnarray*}
			\frac{(2p^*_{\alpha}-p)}{2pp^*_{\alpha}}T(u_n)^p & \leq &  \frac{(2p^*_{\alpha}-p)}{2pp^*_{\alpha}}\left\| \nabla u_n \right\|_p^p+\frac{(2p^*_{\alpha}-sp)}{2pp^*_{\alpha}}[u_n]_{s,p}^p\\
			& = & E(u_n)-\frac{1}{2p^*_{\alpha}}M(u_n)+\frac{\mu}{q}\left(1-\frac{qq\gamma_{p,q}}{2p^*_{\alpha}}\right) \left\| u_n \right\|_q^q\\
			& \leq & m_{r,\tau}^-+1+\frac{C_{N,p,q}(2p^*_{\alpha}-q\gamma_{p,q})}{2p^*_{\alpha}}\tau^{q(1-\gamma_{p,q})}T(u_n)^{q\gamma_{p,q}}.
		\end{eqnarray*}
		Then, either $T(u_n)\leq 1$ or 
		\begin{equation*}
			\frac{(2p^*_{\alpha}-p)}{2pp^*_{\alpha}}T(u_n)^p  \leq  m_{r,\tau}^-+1+\frac{C_{N,p,q}(2p^*_{\alpha}-q\gamma_{p,q})}{2p^*_{\alpha}}\tau^{q(1-\gamma_{p,q})}T(u_n)^{p}.
		\end{equation*}
		Hence for $$\tau < \tau_3=\left(\frac{(2p^*_{\alpha}-p)}{\mu p(2p^*_{\alpha}-q\gamma_{p,q})C_{N,p,q}}\right)^{\frac{1}{q(1-\gamma_{p,q})}},$$
		we get
		$$T(u_n)^p \leq \frac{m_{r,\tau}^-+1}{\left(\frac{(2p^*_{\alpha}-p)}{2pp^*_{\alpha}}-\frac{C_{N,p,q}(2p^*_{\alpha}-q\gamma_{p,q})}{2p^*_{\alpha}}\tau^{q(1-\gamma_{p,q})}\right)}.$$
		Thus, $\{u_n\}_{n\in \mathbb{N}}$ is bounded and hence weakly convergent upto a subsequence in $W^{1,p}(\mathbb{R}^N)$. Denoting the weakly convergent subsequence as $\{u_n\}_{n\in \mathbb{N}}$ itself, let $u_0\in W^{1,p}_r(\mathbb{R}^N)$ be such that $u_n\rightharpoonup u_0$, weakly as $n\rightarrow \infty$. Thanks to the compact embedding $W^{1,p}_r(\mathbb{R})\hookrightarrow L^r(\mathbb{R}^N)$, for all $r\in (p,p^*)$, we get $u_n\rightarrow u_0$ in $L^q(\mathbb{R}^N)$. Next, we claim that $u_0\neq 0$.\\
		Suppose $u_0=0$, then 
		$$0 = \lim_{n\rightarrow \infty}M(u_n) = \lim_{n\rightarrow \infty}\left(\left\| \nabla u_n \right\|_p^p+s[u_n]_{s,p}^p-A(u_n)\right),$$
		and hence $\displaystyle \lim_{n\rightarrow \infty}\left(\left\| \nabla u_n \right\|_p^p+s[u_n]_{s,p}^p\right)= \lim_{n\rightarrow \infty}A(u_n)$. Since $\{u_n\}$ is bounded in $W^{1,p}(\mathbb{R}^N)$, the sequence $\{\left\| \nabla u_n\right\|_p^p+s[u_n]_{s,p}^p\}$ is convergent upto a subsequence in $\mathbb{R}$.
		Now, let $$l=\displaystyle \lim_{n\rightarrow \infty}\left(\left\| \nabla u_n \right\|_p^p+s[u_n]_{s,p}^p\right)= \lim_{n\rightarrow \infty}A(u_n),$$
		then by \eqref{S_alpha,p}, we get $l(\mathbb{S}-l^{\frac{2p^*_{\alpha}-p}{2p^*_{\alpha}}})\leq 0$. Thus, either $l=0$ or $l\geq \mathbb{S}^{\frac{2p^*_{\alpha}}{2p^*_{\alpha}-p}}$. For $l\geq \mathbb{S}^{\frac{2p^*_{\alpha}}{2p^*_{\alpha}-p}}$,  by \eqref{4.21} we get:
		\begin{eqnarray*}
			m_{\tau}^-=m_{r,\tau}^- & = & \lim_{n\rightarrow \infty}\left(\frac{\mu}{q}\left(\frac{q\gamma_{p,q}}{p}-1\right)\left\| u_n \right\|_q^q+\frac{(1-s)}{p}[u_n]_{s,p}^p+\left(\frac{2p^*_{\alpha}-p}{2pp^*_{\alpha}}\right)A(u_n)\right)\\
			& \geq & \lim_{n\rightarrow \infty}\left(\frac{2^p*_{\alpha}-p}{2pp^*_{\alpha}}\right)A(u_n)\geq \left(\frac{2p^*_{\alpha}-p}{2pp^*_{\alpha}}\right)\mathbb{S}^{\frac{2p^*_{\alpha}}{2p^*_{\alpha}-p}}>m_{\tau}+\left(\frac{2p^*_{\alpha}-p}{2pp^*_{\alpha}}\right)\mathbb{S}^{\frac{2p^*_{\alpha}}{2p^*_{\alpha}-p}},
		\end{eqnarray*}
		but this contradicts \autoref{Lemma 4.1}. Also, if $l=0$, we will end up with $m_{r,\tau}^-=0$, but since $0<m_{\tau}^-=m_{r,\tau}^-$, we get a contradiction. Therefore, $u_0\neq 0$. Now, define $v_n:=u_n-u_0$, clearly $v_n\rightharpoonup 0$ in $W^{1,p}(\mathbb{R}^N)$ as $n\rightarrow \infty$.\\
		Case 1: $\left\| v_n\right\|_{W^{1,p}(\mathbb{R}^N)}\rightarrow 0 $.\\
		In this case, we get strong convergence of $\{u_n\}_{n\in \mathbb{N}}$ in $W^{1,p}(\mathbb{R}^N)$, and hence $u_0\in \mathcal{M}_{r,\tau}^-$ with $E(u_0)=m_{\tau}^-$ and hence $E'_{\mathcal{M}_{\tau}}(u_0)=0$. Thus, by \autoref{Lemma 4}, $u_0$ solves \eqref{prob} for some $\lambda_0\in \mathbb{R}$, and since $M(u_0)=0$, we have:
		\begin{equation*}
			\lambda_0\tau^p  =  \left\| \nabla u_0 \right\|_p^p+[u_0]_{s,p}^p-\mu \left\| u_0 \right\|_q^q-A(u_0)= (1-s)[u_0]_{s,p}^p+\mu(\gamma_{p,q}-1)\left\| u_0 \right\|_q^q<0,
		\end{equation*}
		for sufficiently large $\mu>0$. Hence, taking $u_{\tau}^-=u_0$ and $\lambda_{\tau}^-=\lambda_0$, we are done.\\
		Case 2: $\displaystyle \lim_{n\rightarrow \infty}\left\| v_n \right\|_{W^{1,p}(\mathbb{R}^N)}\neq 0$, that is, $\left\| v_n \right\|_{W^{1,p}(\mathbb{R}^N)} \geq \tilde{C}>0$ for large $n\in \mathbb{N}$.\\
		Let $\left\| u_0 \right\|_p=r_0$, then by Fatou's lemma, we have $0<r_0\leq \tau$. Now, either $A(v_n)\rightarrow 0$ or there exists a constant $\bar{C}>0$ such that $A(v_n)\geq \bar{C}$ for large $n\in \mathbb{N}$. Let us analyse the two subcases separately:\\
		Subcase 1: $A(v_n)\rightarrow 0$ as $n\rightarrow \infty$.\\
		Since $u_0\in S(r_0)$, by \autoref{Lemma 2.3}, there exists $c_0>0$ such that $c_0\circledast u_0 \in \mathcal{M}_{r,r_0}^-$. Thus, by \cite[lemma~2.4]{Moroz2013groundstates}, compact embedding of $W^{1,p}_r(\mathbb{R}^N)$ in $L^q(\mathbb{R}^N)$, Fatou's lemma and \autoref{Lemma 2.3} we get
		\begin{eqnarray}\label{4.22}
			m_{\tau}^- & = & \lim_{n\rightarrow \infty}E(u_n) \geq \lim_{n\rightarrow \infty}E(c_0 \circledast u_n) \nonumber\\
			& = & \lim_{n\rightarrow \infty}\left(\frac{c_0^p\left\| \nabla u_n \right\|_p^p}{p}+\frac{c_0^{sp}[u_n]_{s,p}^p}{p}-\frac{\mu c_0^{q\gamma_{p,q}}\left\| u_n \right\|_q^q}{q}-\frac{c_0^{2p^*_{\alpha}}A(u_n)}{2p^*_{\alpha}}\right)\nonumber\\
			& \geq & \frac{c_0^p\left\| \nabla u_0 \right\|_p^p}{p}+\frac{c_0^{sp}[u_0]_{s,p}^p}{p}-\frac{\mu c_0^{q\gamma_{p,q}}\left\| u_0 \right\|_q^q}{q}-\frac{c_0^{2p^*_{\alpha}}A(u_0)}{2p^*_{\alpha}}\nonumber\\
			& = & E(c_0 \circledast u_0)\geq m_{r_0}^-.
		\end{eqnarray}
		Since $0<r_0\leq \tau$, for any $u\in \mathcal{M}_{r_0}^-$, by \autoref{Lemma5.3} we also can find $v\in \mathcal{M}_{\tau}^-$ such that $E(u)>E(v)\geq \displaystyle \inf_{u\in \mathcal{M}_{\tau}^-}E(u)$ and hence $m_{r_0}^-\geq m_{\tau}^-$. Therefore, $m_{\tau}^-=m_{r_0}^-$. Now, we claim that $r_0=\tau$ and hence $u_{\tau}^-=c_0\circledast u_0$ is the required solution to \eqref{prob} corresponding to some $\lambda_{\tau}^-$ with $\lambda_{\tau}^-<0$ for sufficiently large $\mu>0$ as done in case 1.\\
		Suppose if $0<r_0<\tau<\min\{\tau_0,\tau_1\}$, then by \autoref{Lemma5.3}, there exists $\bar{v}\in \mathcal{M}_{\tau}^-$ such that $E(c_0\circledast u_0)>E(\bar{v})$, then by \eqref{4.22} we have
		$$m_{r_0}^-=E(c_0\circledast u_0)>E(\bar{v})\geq m_{\tau}^-,$$
		from which by using $m_{r_0}^-=m_{\tau}^-$, we get contradiction, thus $r_0=\tau$.\\
		Subcase 2: $A(v_n)\geq \bar{C}>0$ for large $n\in \mathbb{N}$.\\
		For every $n\in \mathbb{N}$, define $$s_n:=\left(\frac{\left\| \nabla v_n\right\|_p^p}{A(v_n)}\right)^{\frac{1}{(2p^*_{\alpha}-p)}}.$$
		Clearly, by boundedness of $\{\frac{1}{A(v_n)}\}_{n\in \mathbb{N}}$ and $\{u_n\}_{n\in \mathbb{N}}$ in $W^{1,p}(\mathbb{R}^N)$, $\{s_n\}_{n\in \mathbb{N}}$ is a bounded sequence in $\mathbb{R}$. Now, since $u_0\in S(r_0)$, by \autoref{Lemma 2.3} there exists $c_0>0$ such that $c_0\circledast u_0\in \mathcal{M}_{r_0}^-$. We claim that $s_n\geq c_0$ upto subsequence.\\
		Suppose $s_n<c_0$ for all $n\in \mathbb{N}$, defining 
		$$E_0(u):=\frac{\left\| \nabla u \right\|_p^p}{p}-\frac{A(u)}{2p^*_{\alpha}},$$
		by \autoref{Lemma 2.3}, Brezis Lieb lemma and \cite[lemma~2.4]{Moroz2013groundstates} we get
		\begin{eqnarray}\label{4.23}
			m_{\tau}^- & = & \lim_{n\rightarrow \infty}E(u_n) \geq \lim_{n\rightarrow \infty}E(s_n\circledast u_n) =\lim_{n\rightarrow \infty}\left(E(s_n\circledast u_0)+E(s_n \circledast v_n)\right)\nonumber\\
			& \geq & \lim_{n\rightarrow \infty}\left(E(s_n\circledast u_0)+E_0(s_n \circledast v_n)\right)\geq m_{r_0}^++ \lim_{n\rightarrow \infty}E_0(s_n\circledast v_n).
		\end{eqnarray}
		Now, by \eqref{S_alpha,p}
		$$E_0(s_n\circledast v_n) = \left(\frac{2p^*_{\alpha}-p}{2pp^*_{\alpha}}\right)\left(\frac{\left\| \nabla v_n \right\|_p^p }{A(v_n)^{\frac{p}{2p^*_{\alpha}}}}\right)^{\frac{2p^*_{\alpha}}{2p^*_{\alpha}-p}}\geq \left(\frac{2p^*_{\alpha}-p}{2pp^*_{\alpha}}\right)\mathbb{S}^{\frac{2p^*_{\alpha}}{2p^*_{\alpha}-p}}.$$
		Thus, by \autoref{Lemma5.3} 
		$$m_{\tau}^-\geq m_{r_0}^++\left(\frac{2p^*_{\alpha}-p}{2pp^*_{\alpha}}\right)\mathbb{S}^{\frac{2p^*_{\alpha}}{2p^*_{\alpha}-p}}\geq m_{\tau}^++\left(\frac{2p^*_{\alpha}-p}{2pp^*_{\alpha}}\right)\mathbb{S}^{\frac{2p^*_{\alpha}}{2p^*_{\alpha}-p}}$$
		which yields a contradiction to \autoref{Lemma 4.1}. Thus, there exists a subsequence (denoted as $\{s_n\}_{n\in \mathbb{N}}$ itself), such that $s_n\geq c_0$ for all $n\in \mathbb{N}$. Now, again proceeding as in \eqref{4.23}
		\begin{equation*}
			m_{\tau}^-  =  \lim_{n\rightarrow \infty}E(u_n) \geq \lim_{n\rightarrow \infty}E(c_0\circledast u_n)\geq \lim_{n\rightarrow \infty}\left(E(c_0\circledast u_0)+E_0(c_0\circledast v_n)\right) \geq E(c_0\circledast u_0),
		\end{equation*}
		because $c_0\leq s_n$ which implies that $$\frac{c_0^{2p^*_{\alpha}}A(v_n)}{\left\| \nabla v_n \right\|_p^p}\leq c_0^p,$$ and hence
		$$E_0(c_0\circledast v_n)\geq \left(\frac{2p^*_{\alpha}-p}{2pp^*_{\alpha}}\right)c_0^{2p^*_{\alpha}}A(v_n)\geq0.$$
		Therefore, $E(c_0\circledast u_0) \leq m_{\tau}^-$. Also, since $c_0\circledast u_0\in \mathcal{M}_{r_0}^-$, by \autoref{Lemma5.3},
		$$m_{\tau}^-\geq E(c_0\circledast u_0)\geq m_{r_0}^-\geq m_{\tau}^-.$$
		Hence $E(c_0\circledast u_0) = m_{\tau}^-$, thus taking $u_{\tau}^-=c_0\circledast u_0$ we get the required result.
	\end{myproof}
	\begin{remark}
		We would like to highlight that, for $p+\frac{p^2}{N}<q<p^*$, the work of \cite{Nidhi2025Normalized_Asymp} can be extended to achieve a mountain pass type solution. However, the case $p+\frac{sp^2}{N}\leq q\leq p+\frac{p^2}{N}$ remains unresolved. In this case, one observes that $\psi_{u}\rightarrow 0^+$ and $t\rightarrow -\infty$ and $\psi_{u}\rightarrow -\infty$ as $t \rightarrow +\infty$, ensuring the existence of at least one critical point corresponding to a global maximum. Moreover, the behaviour of $\psi_{u}''$ indicates the presence of a point of inflection as well. Thus, in this case $\mathcal{M}_{\tau}^0$ is non-empty and consequently the above approach cannot be directly followed.
	\end{remark}
	\section*{Acknowledgement}
	The author, Nidhi Nidhi(PMRF ID - 1402685), is supported by the Ministry of Education, Government of India, under the Prime Minister’s Research Fellows (PMRF) scheme. \\
	\noindent The author K. Sreenadh thanks the Department of Science and Technology (DST) India, for providing support under the Improvement of S\&T Infrastructure (FIST) programme. (Project No. SR/FST/MS-1/2019/45).

\end{document}